\numberwithin{equation}{section}
\newtheorem{theorem}{Theorem}[section]
\newtheorem{lemma}[theorem]{Lemma}
\newtheorem{proposition}[theorem]{Proposition}
\newtheorem{remark}[theorem]{Remark}
\newtheorem{definition}[theorem]{Definition}
\newtheorem{corollary}[theorem]{Corollary}
\newcommand{\al}{\alpha}
\newcommand{\be}{\beta}
\newcommand{\ga}{\gamma}
\newcommand{\de}{\delta}
\newcommand{\De}{\Delta}
\newcommand{\e}{\varepsilon}
\newcommand{\ka}{\kappa}
\newcommand{\la}{\lambda}
\newcommand{\si}{\sigma}
\newcommand{\eps}{\eta}
\newcommand{\cq}{\mathcal Q}
\newcommand{\cp}{\mathcal P}
\newcommand{\cb}{\mathcal B}
\newcommand{\ce}{\mathcal E}
\newcommand{\cd}{\mathcal D}
\newcommand{\cj}{\mathcal J}
\newcommand{\ZR}{\mathbb{R}}
\newcommand{\ZS}{\mathbb{S}}
\newcommand{\ck}{{\mathcal K}}
\newcommand{\cT}{{\mathcal T}}
\newcommand{\cl}{{\mathcal L}}
\newcommand{\ang}{\measuredangle}
\author{Hong Wang}
\address{Courant institute of mathematical sciences, New York University}
\email{hw3639@nyu.edu}
\author{Shukun Wu}
\address{Department of Mathematics, Indiana University Bloomington}
\email{shukwu@iu.edu }
\title{Two-ends Furstenberg estimates in the plane}
\begin{document}

\begin{abstract}
We prove two-ends Furstenberg estimates in the plane for a Katz--Tao $(\de,t)$-set of lines, for general $t\in[0,2]$.
\end{abstract}

\maketitle

\section{Introduction}

In this paper, we extend the two-ends Furstenberg inequality established in \cite{Wang-Wu} to a broader class of lines.
We begin with a standard definition.

\begin{definition}[Shading]
\label{shading-def}
Let $L$ be a family of lines in $\ZR^2$ and let $\de\in(0,1)$.
A {\bf shading} $Y:L\to B^2(0,1)$ is an assignment such that $Y(\ell)\subset N_\de(\ell)\cap B^2(0,1)$ is a union of $\de$-balls in $\ZR^2$ for all $\ell\in L$.
We write $(L,Y)_\de$ to emphasize the dependence on $\de$. 
A similar definition applies to a family of $\de$-tubes $\cT$.
\end{definition}

Regarding the distribution of a shading $Y(\ell)$, the next definition gives a quantitative ``two-ends" condition, which is a fairly weak non-concentration condition.

\begin{definition}[Two-ends]
\label{two-ends-def}
Let $\de\in(0,1)$ and let $(L,Y)_\delta$ be a set of lines and shading.
Let $0<\e_2<\e_1<1$.
We say $Y$ is {\bf $(\e_1 ,\e_2, C)$-two-ends} if for all $\ell\in L$ and all $\de\times\de^{\e_1}$-tubes $J\subset N_\de(\ell)$, 
\begin{equation}
    |Y(\ell)\cap J|\leq C\de^{\e_2} |Y(\ell)|.
\end{equation}
When the constant $C$ is not important in the context, we say $Y$ is {\bf $(\e_1 ,\e_2)$-two-ends}, or simply {\bf two-ends}. 
A similar definition applies to a single shading $Y(\ell)$.
\end{definition}

\smallskip

Now let us recall \cite[Theorem 2.1]{Wang-Wu}:
\begin{theorem}
\label{two-ends-furstenberg}
Let $\de\in(0,1)$.
Let $(L,Y)_\de$ be a set of directional $\de$-separated lines in $\ZR^2$ with an $(\e_1, \e_2)$-two-ends shading such that $|Y(\ell)|\geq\la\de$ for all $\ell\in L$. 
Then for all $\e>0$,
\begin{equation}
\label{eq:thm2.1}
    \Big|\bigcup_{\ell\in L}Y(\ell)\Big|\geq c_\e\de^{\e}\de^{\e_1/2} \la^{1/2}\sum_{\ell\in L}|Y(\ell)|.
\end{equation}
\end{theorem}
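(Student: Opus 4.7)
The plan is to reduce to a regular configuration, extract a two-ends dissection on each line, and then run a Wolff-type bush argument at a typical multiplicity point. By the usual dyadic pigeonholing on $|Y(\ell)|$ and on the multiplicity function $\mu(x)=\#\{\ell\in L:x\in Y(\ell)\}$ (both losses absorbed into $\delta^\e$), I may assume $|Y(\ell)|\sim\lambda\delta$ uniformly in $\ell$ and $\mu(x)\sim m$ on a large portion $E'\subset E$. Since $m|E'|\sim\sum_{\ell}|Y(\ell)|$, the target inequality is equivalent to the average-multiplicity bound $m\lesssim\delta^{-\e-\e_1/2}\lambda^{-1/2}$, which is what I aim to prove.

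To exploit the two-ends hypothesis, for each $\ell$ I delete the central $\delta\times\delta^{\e_1}$ sub-tube around the median of $Y(\ell)$; the two remaining pieces $Y_L(\ell),Y_R(\ell)$ are at distance $\geq\delta^{\e_1}$, and Definition \ref{two-ends-def} guarantees that the deleted piece has mass $\leq C\delta^{\e_2}|Y(\ell)|$, so each of $Y_L(\ell),Y_R(\ell)$ retains mass $\gtrsim|Y(\ell)|$. Now I pick $x\in E'$ with $\mu(x)\sim m$ and apply a bush argument. The $m$ lines through $x$ are $\delta$-separated in direction, so bucketing directions into windows of angular width $\delta^{1-\e_1}$ (each containing at most $\delta^{-\e_1}$ lines) leaves a sub-bush of $\gtrsim m\delta^{\e_1}$ lines pairwise at angle $\geq\delta^{1-\e_1}$. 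For any two such lines, their $\delta$-neighborhoods meet only inside $B(x,\delta^{\e_1})$, so the ``far'' pieces $Y(\ell_i)\setminus B(x,\delta^{\e_1})$ are pairwise disjoint subsets of $E$ of mass $\gtrsim\lambda\delta$ each, yielding $|E|\gtrsim m\delta^{\e_1}\cdot\lambda\delta$.

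The hard part is converting this pointwise-at-$x$ bound into the sharp global inequality. A direct substitution $m\sim\sum|Y(\ell)|/|E|$ only yields the square-root estimate $|E|^2\gtrsim\delta^{1+\e_1}\lambda\sum_{\ell}|Y(\ell)|$, weaker than the target by a factor of $(\sum_{\ell}|Y(\ell)|/\delta)^{1/2}$. To recover the correct exponent I would pair the bush with a two-ends-refined C\'ordoba $L^2$ estimate: for $\theta(\ell,\ell')\geq\delta^{1-\e_1}$ the rhombus bound $|Y(\ell)\cap Y(\ell')|\leq\delta^2/\theta$ improves to $\leq C\delta^{\e_2}|Y(\ell)|$, since the intersection is then contained in a single $\delta^{\e_1}$-sub-tube of $\ell$. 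Summing and applying Cauchy--Schwarz covers the regime $\lambda\gtrsim\delta^{\e_1}$ (where in fact the standard Kakeya-with-shading bound $|E|\gtrsim\lambda\sum_{\ell}|Y(\ell)|$ already dominates the target), while the bush estimate -- possibly iterated at an intermediate scale $\rho\in[\delta,\delta^{\e_1}]$ and optimized against $\lambda$ -- handles the complementary regime $\lambda\lesssim\delta^{\e_1}$. Arranging the two pieces to match at the threshold $\lambda\sim\delta^{\e_1}$ so that the sharp exponent $\delta^{\e_1/2}\lambda^{1/2}$ emerges as the balance point is the delicate combinatorial step of the proof.
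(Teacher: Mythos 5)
There is a genuine gap, and it sits exactly where you place your ``delicate combinatorial step.'' Your two elementary inputs only cover the easy regime. The C\'ordoba $L^2$ argument (with or without the two-ends refinement of the rhombus bound) gives $|E|\gtrapprox \la\sum_{\ell}|Y(\ell)|$, which beats the target precisely when $\la\gtrsim\de^{\e_1}$; this part is fine but was never the difficulty. In the complementary regime $\la\lesssim\de^{\e_1}$, your bush bound $|E|\gtrsim m\de^{\e_1}\la\de$ combined with $m\sim\sum_\ell|Y(\ell)|/|E|$ gives $|E|\geq(\de^{1+\e_1}\la\sum_\ell|Y(\ell)|)^{1/2}$, which, as you note, is short of the target by $(\sum_\ell|Y(\ell)|/\de)^{1/2}\sim(\#L\,\la)^{1/2}$ --- a huge factor when $\#L$ is large (e.g.\ $\#L\sim\de^{-1}$, the main case). ``Iterating the bush at an intermediate scale and optimizing against $\la$'' is not an argument: a single bush only sees one point of high multiplicity, and no balancing of these two estimates closes a gap of size $(\#L\,\la)^{1/2}$. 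In fact it cannot be closed by these tools: taking $\#L\sim\de^{-1}$, $Y(\ell)$ a $(\de,s)$-set with $\la\sim\de^{1-s}$, and $\e_1$ tiny, the claimed inequality $|E|\gtrapprox\de^{\e_1/2}\la^{3/2}$ is exactly the sharp $(s,1)$-Furstenberg exponent $2-\tfrac{3s+1}{2}$ of Orponen--Shmerkin and Ren--Wang; a bush-plus-$L^2$ proof of the theorem would give an elementary proof of that result, which is far beyond what those techniques yield (they give only the $\tfrac{s+1}{2}$-type, i.e.\ square-root, exponents you already derived).

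For comparison, the actual proof (in \cite{Wang-Wu}, mirrored here for general $t$) shares only your first moves: pigeonholing to uniform $|Y(\ell)|$ and multiplicity, and a two-ends reduction producing an intermediate scale $\rho\geq\de^{\e_1}$ at which $E_L$ is cut into $\rho$-balls and rescaled --- this is the rigorous version of your ``intermediate scale'' idea, and it is where the factor $\de^{\e_1/2}$ (here $\de^{t\e_1/2}$) enters. But the heart of the matter is a core proposition proved by induction on scales, using uniformization and Shmerkin-type multiscale decomposition (Lemma \ref{multiscale-prop}), a broad--narrow dichotomy (Lemma \ref{broad-narrow-lem}), and, crucially, two deep external inputs: the sharp Furstenberg set estimate (Theorem \ref{furstenberg-thm}) and the Demeter--Wang incidence theorem (Theorem \ref{DW-24}), combined through Proposition \ref{prop-induction}. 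Your proposal replaces all of this machinery with a bush and Cauchy--Schwarz, so the central regime $\la\lesssim\de^{\e_1}$ remains unproved.
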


As discussed in \cite[Section 0.3]{Wang-Wu}, \eqref{eq:thm2.1} matches the numerology of the Furstenberg set inequality established in \cite[Theorem 4.1]{ren2023furstenberg} (see Theorem \ref{furstenberg-thm}).
The assumption that $L$ is a set of directional $\de$-separated lines can easily be replaced by the superficially stronger assumption that $L$ is a Katz--Tao $(\de,1)$-set of lines.
The main result of this paper is a generalization of Theorem \ref{two-ends-furstenberg} to a set of lines and two-ends shading $(L,Y)_\de$, where $L$ is a Katz--Tao $(\de,t)$-set of lines for general $t\in[0,2]$ (see Definition \ref{point-line-def} for the definition of a Katz--Tao $(\de,t)$-set of lines).
We begin with a necessary definition, which is essentially the Katz--Tao constant introduced in Definition \ref{KT-set}.

\begin{definition}
\label{gamma-Y-def}
Let $\de\in(0,1]$, and let $(L,Y)_\de$ be a set of lines and shading.
Let $t\in[0,1]$.
For each $\ell\in L$, define 
\begin{equation}
\label{gamma-y}
    \ga_{Y,t}(\ell):=\sup_{ r\in[\de,1],\, x\in\ell}(\de/r)^{t}(\de^{-2}|B(x,r)\cap Y(\ell)|),
\end{equation}
and define $\ga_{Y,t}:=\sup_{\ell\in L}\ga_{Y,t}(\ell)$.
\end{definition}

\begin{theorem}
\label{two-ends-furstenberg-general-intro}
Let $(L,Y)_\de$ be a set of $\de$-separated lines in $\ZR^2$ with a $(\e_1, \e_2)$-two-ends shading  such that $|Y(\ell)|\geq\la\de$ for all $\ell\in L$.
Let $t\in(0,2)$, and $t^\ast=\min\{t, 2-t\}$. 
Suppose the set of lines $L$ is a Katz--Tao $(\de, t)$-set.
Then for all $\e>0$,
\begin{equation}\label{eq-intro}
    \Big|\bigcup_{\ell\in L}Y(\ell)\Big|\geq c_\e\de^{\e}\de^{t\e_1/2} \la^{1/2}\de^{(t-1)/2}\ga_{Y,t^\ast}^{-1/2}\sum_{\ell\in L}|Y(\ell)|.
\end{equation}
\end{theorem}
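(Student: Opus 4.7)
My plan is to adapt the strategy of \cite[Theorem 2.1]{Wang-Wu} (Theorem \ref{two-ends-furstenberg} above), replacing the key Furstenberg input by the general-$t$ Furstenberg set inequality of \cite[Theorem 4.1]{ren2023furstenberg} (Theorem \ref{furstenberg-thm}). After standard dyadic pigeonholing making $|Y(\ell)|$ essentially constant in $\ell\in L'\subset L$ (losing only $|\log \de|^{O(1)}$ factors that are absorbed into $c_\e\de^\e$) and refining $L$ while preserving the Katz-Tao $(\de,t)$-set hypothesis, I use the $(\e_1,\e_2)$-two-ends condition to split $Y(\ell)$ into essentially disjoint portions, each living in a $\de\times\de^{\e_1}$ sub-tube $J^j_\ell\subset N_\de(\ell)$, whose total measure is a fixed fraction of $|Y(\ell)|$.

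I then isotropically rescale by $\de^{-\e_1}$: each $J^j_\ell$ becomes a $\de^{1-\e_1}\times 1$ tube, the parent line family becomes a Katz-Tao $(\tilde\de,t)$-set at the new scale $\tilde\de:=\de^{1-\e_1}$ (the Katz-Tao property being scale-invariant under such rescaling), and the rescaled shadings are subsets of these $\tilde\de$-tubes. Theorem \ref{furstenberg-thm} then applies to the rescaled configuration and yields a lower bound for $|\bigcup_{\ell,j}\tilde Y_j(\ell)|$ in terms of $\sum_{\ell,j}|\tilde Y_j(\ell)|$. Unwinding the rescaling, and summing over $j$ (of which there are $\sim\de^{-\e_2}$ per line), recovers the claimed bound \eqref{eq-intro}. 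The quantity $\ga_{Y,t^\ast}$ and the exponent $t^\ast=\min\{t,2-t\}$ enter because Theorem \ref{furstenberg-thm} requires a non-concentration hypothesis on the shadings as a point set along each line, quantified at the scale $t^\ast$: for $t\leq 1$ this is exactly a Katz-Tao $(\de,t)$-point condition along $\ell$, while for $t\geq 1$ the sharp exponent becomes the dual $2-t$, reflecting the Kakeya-type dichotomy between sparse lines ($t<1$) and clustered lines ($t>1$). The factor $\ga_{Y,t^\ast}^{-1/2}$ appears by Cauchy-Schwarz inside the incidence step of \cite{ren2023furstenberg}.

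The main technical obstacle should lie in the $t>1$ regime. There the Furstenberg exponent is governed by $2-t$, and one likely needs an additional dual pigeonholing --- grouping the lines into $\de^{t-1}$-wide dual ``bouquets'' --- before Theorem \ref{furstenberg-thm} can be invoked in its sharpest form; the resulting $\de^{(t-1)/2}$ factor then combines with the $\de^{t\e_1/2}$ coming from the two-ends rescaling to produce the exponent in \eqref{eq-intro}. Tracking this balance through the rescaling carefully, and verifying that the rescaled shadings continue to satisfy a non-concentration condition with constant comparable to $\ga_{Y,t^\ast}$ at the new scale $\tilde\de$, is where I expect the bulk of the bookkeeping to sit.
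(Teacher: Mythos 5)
There is a genuine gap, and it sits exactly where you park the ``bookkeeping'': the reduction to Theorem \ref{furstenberg-thm} does not go through under the stated hypotheses. Theorem \ref{furstenberg-thm} requires each shading $Y(\ell)$ to be a $(\de,s,\de^{-\eta})$-set, i.e.\ a non-concentration condition holding uniformly across all scales $r\in[\de,1]$, and it requires the line family to be a $(\de,t)$-set rather than merely Katz-Tao. Neither is available here: the theorem assumes nothing about the internal structure of $Y(\ell)$ beyond the two-ends condition, and the quantity $\ga_{Y,t^\ast}$ is a single worst-case supremum that does not encode multi-scale $(\de,s)$-set structure; after your $\de^{-\e_1}$-rescaling there is no $s$ for which the rescaled shadings are $(\tilde\de,s)$-sets with controlled constant. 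Your assertion that ``the factor $\ga_{Y,t^\ast}^{-1/2}$ appears by Cauchy--Schwarz inside the incidence step of \cite{ren2023furstenberg}'' is not a step one can carry out: that theorem is a black box whose hypotheses must be met, and meeting them is the actual content of the problem. In the paper, Theorem \ref{furstenberg-thm} is invoked only in the special regime where the shadings really are $(\de,s)$-sets down to scale $\de$ (Lemma \ref{furstenberg-katz-tao-lem}, used when the branching scale $r\geq\de^\eta$); in general the proof runs an induction on scales (Proposition \ref{two-ends-furstenberg-prop}) built on Shmerkin-type multiscale decomposition of the branching function (Lemma \ref{multi-scale-lem}), a broad--narrow dichotomy, and the Demeter--Wang incidence theorem via Proposition \ref{prop-induction}, with three cases each closed by re-applying the proposition at a coarser scale. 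None of this machinery is replaceable by a single application of Theorem \ref{furstenberg-thm}.

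A second, smaller but real, issue is the claim that ``the Katz-Tao property is scale-invariant under such rescaling.'' The original line set is indeed Katz-Tao $(\de,t)$ after an isotropic dilation, but the objects you feed into the Furstenberg estimate after two-ends localization are $\de\times\de^{\e_1}$ tube segments, and many distinct lines of $L$ can generate the same segment; the family of segments inside a $\de^{\e_1}$-ball need not be a Katz-Tao $(\de^{1-\e_1},t)$-set at all. The paper handles this by introducing the multiplicity $\si$, proving $\si\mu_B\lesssim\rho^{-t}$, and extracting a random subsample $\cT_B''$ that is Katz-Tao; this is also precisely where the factor $\de^{t\e_1/2}$ is generated, rather than from the rescaling per se. Your plan as written would need both this sparsification step and, far more seriously, a substitute for the induction-on-scales argument before it could produce \eqref{eq-intro} with the factor $\ga_{Y,t^\ast}^{-1/2}$.
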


Compared to \eqref{eq:thm2.1}, an additional factor $\ga_{Y,t^\ast}^{-1/2}$ appears in \eqref{eq-intro}, which always equals $1$ when $t=1$.
When $t\not=1$, $\ga_{Y,t^\ast}^{-1/2}$ is generally smaller than $1$, and, unfortunately, necessary.
See Remark \ref{sharp-example} for some examples.

\smallskip

As a corollary, we have
\begin{corollary}
Let $(L,Y)_\de$ be a set of $\de$-separated lines in $\ZR^2$ with a $(\e_1, \e_2)$-two-ends shading.
Let $t\in(0,2)$.
Suppose that $L$ is a Katz--Tao $(\de, t)$-set.
Suppose also that for each $\ell\in L$,  $|Y(\ell)|\geq\la\de$, and $Y(\ell)$ is a Katz--Tao $(\de, \min\{t, 2-t\})$-set.
Then for all $\e>0$,
\begin{equation}\label{eq-cor}
    \Big|\bigcup_{\ell\in L}Y(\ell)\Big|\geq c_\e\de^{\e}\de^{t\e_1/2} \la^{1/2}\de^{(t-1)/2}\sum_{\ell\in L}|Y(\ell)|.
\end{equation}
\end{corollary}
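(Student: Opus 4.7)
The plan is to obtain this corollary as a direct consequence of Theorem~\ref{two-ends-furstenberg-general-intro}. Comparing \eqref{eq-intro} with the desired \eqref{eq-cor}, the only discrepancy is the factor $\ga_{Y,t^\ast}^{-1/2}$ on the right-hand side of \eqref{eq-intro}. Thus it suffices to verify that, under the extra hypothesis that each shading $Y(\ell)$ is a Katz-Tao $(\de,t^\ast)$-set, the quantity $\ga_{Y,t^\ast}$ is bounded by an absolute constant.

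To see this, I would simply unwind Definition~\ref{gamma-Y-def}. For each $\ell \in L$,
\[
    \ga_{Y,t^\ast}(\ell) = \sup_{r\in[\de,1],\, x\in\ell}(\de/r)^{t^\ast}\bigl(\de^{-2}|B(x,r)\cap Y(\ell)|\bigr).
\]
The Katz-Tao $(\de,t^\ast)$ hypothesis on $Y(\ell)$ (a union of $\de$-balls in $N_\de(\ell)\cap B^2(0,1)$) is precisely the pointwise ball-counting bound
\[
    |B(x,r)\cap Y(\ell)| \lesssim (r/\de)^{t^\ast}\,\de^2 \qquad \text{for all } x\in\ZR^2,\ r\in[\de,1],
\]
since $t^\ast=\min\{t,2-t\}\le 1$ for $t\in(0,2)$. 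Substituting into the supremum yields $\ga_{Y,t^\ast}(\ell)\lesssim 1$, and taking the supremum over $\ell\in L$ gives $\ga_{Y,t^\ast}\lesssim 1$.

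Feeding this bound back into \eqref{eq-intro} of Theorem~\ref{two-ends-furstenberg-general-intro} (and absorbing the implicit constant into $c_\e\de^\e$) immediately produces \eqref{eq-cor}. There is no genuine obstacle here beyond the bookkeeping: the Katz-Tao condition on $Y(\ell)$ is designed to be exactly the non-concentration statement that the functional $\ga_{Y,t^\ast}$ measures, so the corollary is a clean specialization of the main theorem. The substantive work of the paper lies entirely in proving Theorem~\ref{two-ends-furstenberg-general-intro} itself, in particular in understanding why $t^\ast=\min\{t,2-t\}$ (rather than $t$) is the correct scale on which to impose the non-concentration hypothesis on $Y(\ell)$.
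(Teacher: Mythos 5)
Your proof is correct and is exactly the argument the paper intends (the paper states the corollary without proof as an immediate specialization of Theorem \ref{two-ends-furstenberg-general-intro}): the Katz-Tao $(\de,t^\ast)$ condition on $Y(\ell)$ gives $|B(x,r)\cap Y(\ell)|\lesssim (r/\de)^{t^\ast}\de^2$, hence $\ga_{Y,t^\ast}\lesssim 1$ and $\ga_{Y,t^\ast}^{-1/2}\gtrsim 1$, so \eqref{eq-intro} yields \eqref{eq-cor} after absorbing the constant into $c_\e$.
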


\bigskip

\noindent {\bf Notation:} Throughout the paper, we use $\# E$ to denote the cardinality of a finite set.
We denote by $|E|$ the Lebesgue measure of a set $E\subset \ZR^n$.
If $\ce$ is a family of sets in $\ZR^n$, we use $\cup_\ce$ to denote $\cup_{E\in\ce}E$.
For $A,B\geq 0$, we use $A\lesssim B$ to mean $A\leq CB$ for an absolute (big) constant $C$, and use $A\sim B$ to mean $A\lesssim B$ and $B\lesssim A$.
For a given $\de<1$, we use $ A \lessapprox B$ to denote $A\leq c_\e\de^{-\e} B$ for all $\e>0$.

\bigskip

\section{Preliminaries}\label{section: incidence geometry}

We recall some basic definitions and tools in this section.

\begin{definition}
For two finite sets $E,F$, we say $E$ is a  $\gtrsim c$-refinement of $F$, if $E\subset F$ and $\#E\gtrsim c\#F$; we say $E$ is a $\gtrapprox c$-refinement of $F$, or simply a {\bf refinement} of $F$, if $E\subset F$ and  $\#E\gtrapprox c\#F$.
A similar definition applies when $E,F$ are two finite unions of $\de$-balls.
\end{definition}

\begin{definition}
\label{E-rho}
Let $\delta\in(0,1)$ be a small number.
Let $E$ be a finite union of $\de$-balls in $\ZR^n$. 
For another $\rho\geq\de$, let
\begin{equation}
\label{rho-covering}
    |E|_\rho=\min\{\#\cd_\rho:\cd_\rho\text{ is a covering of $E$ by $\rho$-balls}\}.
\end{equation}
Let $\cd_\rho(E)$ be a family of $\rho$-balls that attains the minimum in \eqref{rho-covering}.
Denote by 
\begin{equation}
\label{E-sub-rho}
    (E)_\rho=\cup_{\cd_\rho(E)}.
\end{equation}
The same definitions apply to $\cp$ when it is a finite set of disjoint $\de$-balls in $\ZR^n$ by considering $\cup_{\cp}$.
\end{definition}

\smallskip

To capture the distribution of a fractal set, we need the following two notations.

\begin{definition}[$(\de,s,C)$-set]
\label{delta-s}
Let $\delta\in(0,1)$ be a small number.
For $s\in(0,n]$, a non-empty set $E\subset \ZR^n$ is called a {\bf $(\de,s,C)$-set} (or simply a {\bf $(\de,s)$-set} if $C$ is not important in the context) if 
\begin{equation}
    |E\cap B(x,r)|_\de\leq Cr^s|E|_\de, \hspace{.3cm}\forall x \in\ZR^n, \,r\in[\de,1].
\end{equation}

\end{definition}

\begin{definition}[Katz--Tao $(\de,s,C)$-set]
\label{KT-set}
Let $\delta\in(0,1)$ be a small number.
For $s\in(0,n]$, a finite set $E\subset \ZR^n$ is called a {\bf Katz--Tao $(\de,s,C)$-set} (or simply a {\bf Katz--Tao $(\de,s)$-set} if $C$ is not important in the context) if 
\begin{equation}
    \#(E\cap B(x,r))\leq C(r/\de)^s, \hspace{.3cm}\forall x \in\ZR^n, \,r\in[\de,1].
\end{equation}
\end{definition}

By a standard point-line duality (see, e.g., \cite[Definition 1.13]{Wang-Wu}), we have
\begin{definition}
\label{point-line-def}
\rm
A set of lines in $\ZR^2$ is a Katz--Tao $(\de,t)$-set (respectively, a $(\de,t)$-set) if its dual point set in $\ZR^2$ is a Katz--Tao $(\de,t)$-set (respectively, a $(\de,t)$-set).
\end{definition}

\smallskip

The next two lemmas are standard. 
See \cite{Wang-Wu}.

\begin{lemma}
\label{refinement-delta-s-set}
Let $\delta\in(0,1)$ and let $C_1, C_2\geq1$.
If a union of $\de$-balls $E$ is a $(\de,s, C_1)$-set and $E'$ is a $\geq C_2^{-1}$-refinement of $E$, then $E'$ is a $(\de,s, C_1C_2)$-set. 
\end{lemma}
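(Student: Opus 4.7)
The plan is to prove this in essentially one line by chaining three monotonicity-type inequalities, since the lemma only asserts that the $(\de,s)$-set constant degrades by the refinement factor. I would first note that the $(\de,s,C_1)$-set hypothesis on $E$ bounds $|E\cap B(x,r)|_\de$ uniformly in $x\in\ZR^2$ and $r\in[\de,1]$, and that the refinement hypothesis (interpreted via Definition \ref{E-rho} for unions of $\de$-balls) means $E'\subset E$ together with $|E'|_\de\geq C_2^{-1}|E|_\de$.

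The first step is the trivial inclusion bound: since $E'\subset E$, any minimal $\de$-ball covering of $E\cap B(x,r)$ restricts to a covering of $E'\cap B(x,r)$, so
\begin{equation*}
|E'\cap B(x,r)|_\de \leq |E\cap B(x,r)|_\de.
\end{equation*}
The second step is to invoke the $(\de,s,C_1)$-set property of $E$ to bound the right-hand side by $C_1 r^s |E|_\de$. The third step is to use the refinement bound $|E|_\de\leq C_2|E'|_\de$ to rewrite the $|E|_\de$ factor in terms of $|E'|_\de$. Concatenating these three inequalities yields
\begin{equation*}
|E'\cap B(x,r)|_\de \leq C_1 r^s |E|_\de \leq C_1 C_2 r^s |E'|_\de
\end{equation*}
for every $x\in\ZR^2$ and $r\in[\de,1]$, which is exactly the $(\de,s,C_1C_2)$-set condition.

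There is no genuine obstacle here; the only point requiring a brief comment is the interpretation of the refinement condition and of $|\,\cdot\,|_\de$ for unions of $\de$-balls, so that the monotonicity in the first step is valid. Once that is observed, the proof is a one-line chain. I would write it up as a short paragraph explicitly recording the three inequalities above, with a sentence noting that the analogous statement for Katz-Tao $(\de,s)$-sets follows by the same argument with $\#(\cdot)$ in place of $|\cdot|_\de$.
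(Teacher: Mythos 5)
Your proof is correct and is the standard, essentially unique argument for this statement: monotonicity of $\de$-covering numbers under inclusion, the $(\de,s,C_1)$-set bound on $E$, and the refinement lower bound $|E'|_\de\geq C_2^{-1}|E|_\de$ chained together. The paper does not spell out a proof of this lemma at all—it simply refers the reader to \cite{Wang-Wu}—so there is nothing to contrast with, and your write-up is exactly what a reader would expect to supply.
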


\begin{lemma}
\label{Katz--Tao-set-lem}
Let $0<\de<\rho<1$. 
Suppose $E$ is a Katz--Tao $(\de,s,C)$-set, where $1\leq C\lessapprox1$.
Then there exists $E'\subset E$ such that $\rho^s\#E'\gtrapprox\de^s\#E$, and $E'$ is a Katz--Tao  $(\rho,s,C')$-set for some $C'\lesssim1$.
\end{lemma}

\smallskip

The following is a generalization of Definition \ref{delta-s}, quantifying a similar non-concentration condition, though at coarser scales.

\begin{definition}[$(\de,s,C; \De)$-set]
\label{delta-s-Delta}
Let $\de\in(0,1)$ and let $\De\in[\de,1]$.
We say a union of $\de$-balls $E$ is a {\bf $(\de,s,C; \De)$-set}, if 
\begin{equation}
    |E\cap B(x,r)|_\de\leq Cr^s|E|_\de, \hspace{.3cm}\forall x \in\ZR^n, \,r\in[\De,1].
\end{equation}
\end{definition}
Note that a $(\de,s,C)$-set is a $(\de,s,C;\De)$-set for all $\De\in[\de,1]$.
When $\De=\de$, a $(\de,s,C;\De)$-set is just a $(\de,s,C)$-set.
Also, at the coarser resolution $\De$, after uniformization (see Lemma \ref{uniformization}), a $(\de,s,C;\De)$-set is a $(\De,s,C)$-set.
Moreover, similar to Lemma \ref{refinement-delta-s-set}, we have
\begin{lemma}
\label{refinement-delta-s-Delta-set}
Let $0<\de\leq\De\leq1$ and let $C_1, C_2\geq1$.
If a union of $\de$-balls $E$ is a $(\de,s, C_1;\De)$-set and $E'$ is a $\gtrsim C_2^{-1}$-refinement of $E$, then $E'$ is a $(\de,s, C_1C_2;\De)$-set. 
\end{lemma}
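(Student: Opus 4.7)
The plan is to imitate the proof of Lemma \ref{refinement-delta-s-set} verbatim; the only change from that lemma is that the non-concentration condition is required to hold only at scales $r\in[\De,1]$ rather than $r\in[\de,1]$, and this restriction is preserved under the argument without any modification.

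Concretely, I would fix an arbitrary $x\in\ZR^n$ and $r\in[\De,1]$ and chain three elementary inequalities. First, since $E'\subset E$, monotonicity of the $\de$-covering number gives $|E'\cap B(x,r)|_\de\leq |E\cap B(x,r)|_\de$. Second, since $E$ is a $(\de,s,C_1;\De)$-set and $r\in[\De,1]$, the defining inequality of Definition \ref{delta-s-Delta} yields $|E\cap B(x,r)|_\de\leq C_1 r^s|E|_\de$. Third, the refinement assumption $|E'|_\de\gtrsim C_2^{-1}|E|_\de$ rearranges to $|E|_\de\lesssim C_2|E'|_\de$. Combining the three gives $|E'\cap B(x,r)|_\de\lesssim C_1C_2 r^s|E'|_\de$ for every $r\in[\De,1]$, which is exactly the $(\de,s,C_1C_2;\De)$-set property (with the implicit absolute constant absorbed in the $\gtrsim$ convention of the statement).

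There is essentially no obstacle. The point to notice is that the restriction $r\geq\De$ plays no role in the argument beyond being the regime in which the hypothesis is applicable, and the conclusion is drawn in the same regime; no interpolation between scales $\de$ and $\De$ is needed, so the proof is as short as that of Lemma \ref{refinement-delta-s-set}.
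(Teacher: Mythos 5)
Your proof is correct and is exactly the argument the paper leaves implicit (the lemma is stated without proof as the obvious analogue of Lemma \ref{refinement-delta-s-set}): monotonicity of the $\de$-covering number, the $(\de,s,C_1;\De)$ hypothesis at the scale $r\in[\De,1]$, and the refinement bound $|E|_\de\lesssim C_2|E'|_\de$, with the harmless absolute constant absorbed into the paper's conventions. Nothing more is needed.
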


\smallskip

Next, we recall the process of uniformization.
Compared to that of an arbitrary set, the distribution of a uniform set is easier to describe.
As we will see in Definition \ref{branching-fcn-def}, this distribution can be captured precisely by a Lipschitz function known as the branching function of the uniform set.

\begin{definition}
\label{m-uniform}
Let $\delta\in(0,1)$ be a small number.
Let $M=|\log\de|$ and let $\rho_j = M^{-j}$, $j=1,\dots, \lceil\log_M\de^{-1}\rceil$.
Given $E$ a union of $\de$-balls in $\ZR^n$, we say $E$ is {\bf uniform  with error $C$}  if $|E\cap D_{\rho_j}|$ are  the same up to a multiple of $C$ for all $D_{\rho_j}\subset(E)_{\rho_j}$. 
When $C$ is not important in the context, we say $E$ is {\bf uniform}. 
The same definitions apply to $\cp$ when it is a finite set of disjoint $\de$-balls in $\ZR^n$ by considering $\cup_{\cp}$.

\end{definition}

The following lemma shows that for an arbitrary set, we can always find a refinement that is also uniform. 
See \cite[Lemma 1.11]{Wang-Wu} for details.

\begin{lemma}[Uniformization]
\label{uniformization}
Let $\delta\in(0,1)$ be a small number.
If $E\subset\ZR^n$ is a union of dyadic $\de$-balls, then there is a $\gtrsim (\log|\log\de|)^{-\frac{|\log\de|}{\log|\log\de|}}$-refinement $E'$ of $E$ such that $E'$ is uniform.
\end{lemma}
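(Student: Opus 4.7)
The plan is to iteratively refine $E$ one scale at a time using dyadic pigeonholing, processing the scales from the finest to the coarsest so that the uniformity established at finer scales is preserved automatically. Set $M = |\log\de|$ and $J = \lceil \log_M \de^{-1}\rceil$, and let $\rho_j = M^{-j}$ so that $\rho_J \sim \de$. I would construct a decreasing chain of refinements $E = E_{J+1} \supseteq E_J \supseteq \cdots \supseteq E_1 = E'$ where each $E_j$ is uniform at every scale $\rho_j, \rho_{j+1}, \ldots, \rho_J$, with a controlled loss at each refinement step. Since $\rho_J \le \de$, each $\rho_J$-ball lies inside at most one $\de$-ball, so $E_{J+1} = E$ is already trivially uniform at scale $\rho_J$, and the induction can start with a cosmetic pigeonhole (losing a factor of $O(\log M)$) to account for whether a $\rho_J$-ball is empty or not.

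The step from $E_{j+1}$ to $E_j$ is the heart of the argument. By the inductive hypothesis, every non-empty $\rho_{j+1}$-ball of $E_{j+1}$ contains the same number of $\de$-balls up to the error constant. Each $\rho_j$-ball is subdivided into at most $M^n$ child $\rho_{j+1}$-balls, so the count of non-empty $\rho_{j+1}$-children inside each $\rho_j$-ball takes a value in $[1, M^n]$. I would dyadically pigeonhole the $\rho_j$-balls according to this child count and retain only those in the dominant dyadic class, defining $E_j$ as the union of those $\de$-balls belonging to the retained $\rho_j$-blocks. This loses a factor $O(\log M^n) = O(\log|\log\de|)$ in mass, and crucially the uniformity at all finer scales $\rho_{j+1}, \ldots, \rho_J$ is preserved because the discarding is carried out in whole $\rho_j$-blocks, which does not disturb counts inside a surviving $\rho_{j+1}$-ball.

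Iterating across the $J$ scales produces a refinement $E'$ of $E$ that is uniform at every scale $\rho_1, \ldots, \rho_J$, with total loss of order $(\log|\log\de|)^J$. Since $J = \lceil |\log\de|/\log|\log\de| \rceil$, this matches the bound claimed in the lemma, modulo absorbing universal constants into the exponent.

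The main subtlety to guard against is the ordering of the scales. A coarse-to-fine sweep would, at each step, face child counts ranging over $[1,(\rho_j/\de)^n]$ rather than $[1,M^n]$, producing a per-scale loss of order $\log \de^{-1}$ that becomes catastrophic after $J$ iterations. Processing from the finest scale outward eliminates this extra degree of freedom: once finer-scale uniformity is available, the count inside any $\rho_j$-ball is determined by the number of its non-empty $\rho_{j+1}$-children, which is $O(\log M)$-dyadically controlled, yielding the per-scale loss of $O(\log|\log\de|)$ needed for the stated bound.
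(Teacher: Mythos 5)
Your proof is correct and is essentially the standard argument behind this uniformization lemma. The point you single out—sweeping from the finest scale outward so that, by the inductive uniformity already established at finer scales, the mass of each $\rho_j$-ball is governed by a non-empty-child count confined to $[1,M^n]$, making the per-scale dyadic pigeonholing loss only $O(\log|\log\delta|)$ while the block-wise discarding leaves finer-scale counts untouched—is exactly the crux.
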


\begin{definition}[Branching function]
\label{branching-fcn-def}
Let $\delta\in(0,1)$ and let $M=|\log\de|$.
Let $\rho_j =M^{-j},\, j=1, \dots,  \lceil \log_M\de^{-1}\rceil=: N $.
Let $E\subset\ZR^n$ be a union of $\de$-balls that is uniform.
Define a {\bf branching function} $\be_E:[0,N]\to[0,n]$ as
\begin{equation}
    \be_E(j)=\frac{\log(|E|_{\rho_j})}{ |\log \delta|}, \hspace{.3cm}0\leq j\leq N
\end{equation}
and interpolate linearly between $\beta_E( j)$ and $\beta_E(j+1)$. 
\end{definition}

Given a family of uniform sets, the next lemma shows that this family has a refinement whose sets have branching functions that are essentially the same.
For its proof, we refer to \cite{Wang-Wu}.

\begin{lemma}
\label{uniform-sets-branching-lem}
Let $\delta\in(0,1)$, $M=|\log\de|$, and  $N=\lceil \log_M \delta^{-1} \rceil$.
Suppose $\ce$ is a finite family of uniform sets of $\de$-balls in the unit ball.
Then there is subset $\ce'\subset\ce$ with $\# \ce'\gtrsim (n \log M)^{-N}\#\ce $ 
and a {\bf uniform branching function $\be_{\ce'}$ of $\ce'$} such that for any $E\in\ce'$, $|\be_{E}-\be_{\ce'}|\leq  |\log \delta|^{-1}$.    
In particular, $\#\ce'\gtrapprox\#\ce$.

\end{lemma}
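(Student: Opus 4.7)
\emph{Proof proposal.} The plan is a pigeonhole argument on a suitable discretization of the branching functions $\be_E$ for $E\in\ce$. The key structural observation is that for each $E$, $\be_E(0)=0$ (since $E\subset B(0,1)$ forces $|E|_1=1$), $\be_E$ is non-decreasing, and the integer increments satisfy a Lipschitz-type bound
\[
\be_E(j)-\be_E(j-1)=\frac{\log(|E|_{\rho_j}/|E|_{\rho_{j-1}})}{|\log\de|}\in\Bigl[0,\,\frac{n\log M}{|\log\de|}\Bigr],
\]
where the upper bound holds because each $\rho_{j-1}$-ball meets at most $(\rho_{j-1}/\rho_j)^n=M^n$ disjoint $\rho_j$-balls.

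Next I would partition $[0,n]$ into intervals (``bins'') of length $|\log\de|^{-1}$, and for each $E\in\ce$ and each $j\in\{1,\dots,N\}$ record the bin $b_j(E)$ containing $\be_E(j)$. Given $b_{j-1}(E)$, the Lipschitz bound forces $\be_E(j)$ into an interval of length $|\log\de|^{-1}+n\log M\cdot|\log\de|^{-1}$, which intersects $\lesssim n\log M$ bins. Hence the total number of possible bin-sequences $(b_1(E),\dots,b_N(E))$ is at most $(Cn\log M)^N$ for an absolute constant $C$, and the usual pigeonhole produces a subfamily $\ce'\subset\ce$ with $\#\ce'\gtrsim(n\log M)^{-N}\#\ce$ on which the bin-sequence is constant.

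Finally I would define $\be_{\ce'}$ to be the piecewise linear function whose value at each integer $j$ is the midpoint of bin $b_j$. By construction $|\be_E(j)-\be_{\ce'}(j)|\le|\log\de|^{-1}$ for every $E\in\ce'$ and every integer $j\in[0,N]$; since both $\be_E$ and $\be_{\ce'}$ are linear on each interval $[j-1,j]$, so is their difference, and the maximum on $[j-1,j]$ is attained at an endpoint, so the estimate propagates to $\sup_{[0,N]}|\be_E-\be_{\ce'}|\le|\log\de|^{-1}$. The step to watch is the branching count: a naive pigeonhole that discretizes each $\be_E(j)$ independently in $[0,n]$ would yield $(n|\log\de|)^{-N}=\de^{-O(1)}$, which is useless, so the whole argument hinges on using the per-scale Lipschitz bound to replace $n|\log\de|$ by $n\log M$ in the bin count at each step.
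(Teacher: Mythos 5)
Your proposal is correct and is essentially the standard pigeonholing argument that the paper defers to \cite{Wang-Wu}: discretize the values $\be_E(j)$ into bins of width $|\log\de|^{-1}$, use the per-scale bound $\be_E(j)-\be_E(j-1)\in[0,O(n\log M)/|\log\de|]$ (valid since each $\rho_{j-1}$-ball is covered by $O(M^n)$ balls of radius $\rho_j$) to count admissible bin sequences, and pigeonhole. The only discrepancy is that your count yields $(Cn\log M)^N$ classes rather than $(n\log M)^N$, i.e.\ $\#\ce'\gtrsim(Cn\log M)^{-N}\#\ce$; since $C^{-N}=\de^{o(1)}$, this is absorbed into the $\gtrapprox$ and does not affect the conclusion $\#\ce'\gtrapprox\#\ce$ or any use of the lemma in the paper.
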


\smallskip

After rescaling, the branching function of a uniform set becomes a 1-Lipschitz function.
To study such functions, Shmerkin \cite{shmerkin2023non} introduced a powerful tool known as multi-scale decomposition, which has since been developed in various forms.
The version we need is the following, whose proof can be found in \cite{Demeter-Wang}.

\begin{lemma}
\label{multiscale-prop}
Let $\eta>0$ be a small number and let $\eta_0=\eta_0(\eta)=\eta^{2\eta^{-1}}$. 
Then for a non-decreasing 1-Lipschitz function $f:[0,1]\to[0,1]$, there exists a partition
\begin{equation}
    0=A_1<A_2<\cdots<A_{H+1}=1
\end{equation}
and a sequence 
\begin{equation}
    0\leq s_1< s_2<\cdots< s_{H}\leq 1
\end{equation}
such that for each $1\leq h\leq H$, we have the following:
\begin{equation}
    A_{h+1}-A_h\geq\eta_0\eta^{-1};
\end{equation}
\begin{align}
    f(x)\geq& f(A_h)+s_h(x-A_h)-\eta(A_{h+1}-A_h), \text{ for all }x\in[A_h,A_{h+1}],\\
    &f(A_{h+1})\leq f(A_h)+(s_h+3\eta)(A_{h+1}-A_h);
\end{align}
\begin{equation}
    s_H\geq f(1)-f(0)-\eta.
\end{equation}
\end{lemma}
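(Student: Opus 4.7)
The plan is to construct the partition from the convex minorant of $f$, followed by a pigeonhole quantization of slopes and a merging step to absorb short pieces. Since $f$ is non-decreasing and $1$-Lipschitz on $[0,1]$, the convex minorant $g$ of $f$ is a non-decreasing piecewise-linear convex function with $g(0)=f(0)$, $g(1)=f(1)$, $g\le f$ pointwise, slopes in $[0,1]$, and every breakpoint of $g$ lying on the graph of $f$. Its slopes form an increasing sequence $\sigma_1<\sigma_2<\cdots<\sigma_m$, so the partition it induces already has monotone slopes; this is where both the monotonicity and the $1$-Lipschitz hypotheses on $f$ are used.

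Next, I would round each $\sigma_j$ down to the nearest multiple of $\eta$ and group consecutive linear pieces of $g$ whose rounded slopes coincide. Writing $0=A_1<\cdots<A_{H+1}=1$ for the resulting partition and $s_h\in\eta\ZZ$ for the common rounded slope of the $h$-th block, one gets a strictly increasing sequence $s_1<s_2<\cdots<s_H\le 1$ since the $\sigma_j$ were strictly increasing. The lower bound $f(x)\ge f(A_h)+s_h(x-A_h)-\eta(A_{h+1}-A_h)$ then follows from $f\ge g$, the identity $g(A_h)=f(A_h)$ at breakpoints, and the fact that $g$ has slopes in $[s_h,s_h+\eta)$ on $[A_h,A_{h+1}]$. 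The upper secant bound $f(A_{h+1})-f(A_h)\le(s_h+3\eta)(A_{h+1}-A_h)$ follows from the same slope control applied to $g(A_{h+1})-g(A_h)=f(A_{h+1})-f(A_h)$. Finally, $s_H\ge f(1)-f(0)-\eta$ comes from telescoping: if every $s_h$ were below $f(1)-f(0)-\eta$, then $f(1)-f(0)\le\sum_h(s_h+\eta)(A_{h+1}-A_h)<f(1)-f(0)$, a contradiction.

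The main obstacle, and where I expect to spend most of the bookkeeping, is enforcing the scale bound $A_{h+1}-A_h\ge\eta_0\eta^{-1}=\eta^{2\eta^{-1}-1}$, which the raw convex-minorant construction does not give. I would iterate a merging procedure: whenever a block has length below $\eta_0\eta^{-1}$, absorb it into an adjacent block and promote its slope label to the larger one. Each merge inflates the effective slope on the absorbed pieces by at most $\eta$, eating an additive $\eta(A_{h+1}-A_h)$ out of the slack in both the lower and upper bounds. Since there are at most $\eta^{-1}+1$ distinct slope classes, there are $O(\eta^{-1})$ merges in total, and a geometric pigeonhole controlling how much length is lost per class gives exactly the choice $\eta_0=\eta^{2\eta^{-1}}$. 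The factor $3\eta$ (rather than $\eta$) on the upper bound and the additive $\eta(A_{h+1}-A_h)$ slack in the lower bound are precisely sized to absorb this cumulative inflation; once this accounting is made precise, the three conclusions of the lemma follow.
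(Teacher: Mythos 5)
The paper does not actually prove this lemma: it is quoted from Shmerkin's multiscale decomposition, and the text explicitly defers the proof to \cite{Demeter-Wang}. So your proposal is not comparable to a proof of the paper's own, and I assess it on its own terms.

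Your convex-minorant construction is a genuinely different route from the iterative scale-by-scale regularization used in \cite{shmerkin2023non} and \cite{Demeter-Wang}, and the part of your argument carried out on the \emph{un-merged} blocks is sound: $g$ is non-decreasing, $1$-Lipschitz, touches $f$ at its vertices, its slopes are increasing, and with $s_h=\eta\lfloor\sigma/\eta\rfloor$ the lower bound, the secant bound with a single $\eta$ of slack, and the estimate $s_H\ge f(1)-f(0)-\eta$ all follow exactly as you say. (Two small points: $g$ need not be \emph{piecewise} linear for a general $1$-Lipschitz $f$, so you should either discretize at the start or note that in the application $f$ is already piecewise linear; and, you should confirm that the paper's ``$s_l$'' is a typo for ``$s_h$'', which it is, cf.\ Lemma \ref{multi-scale-lem}.)

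The gap is in the merging step, and it is not a bookkeeping detail. Your accounting rests on the claim that ``each merge inflates the effective slope on the absorbed pieces by at most $\eta$.'' This is false as stated. The rounded slope labels $s_1<s_2<\cdots$ are \emph{not} consecutive multiples of $\eta$: the convex minorant's slope can jump past several multiples of $\eta$ between two breakpoints, so an adjacent pair of blocks may have $s_{h+1}-s_h=M\eta$ with $M$ as large as $\eta^{-1}$. Absorbing a short block $B_h$ into $B_{h+1}$ and relabelling with $s_{h+1}$ then costs $(s_{h+1}-s_h)\,\ell_h=M\eta\,\ell_h$, not $\eta\,\ell_h$, in the lower bound, and worse in the secant bound. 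The same problem cascades when several consecutive blocks are short: if $B_{h},\dots,B_{h+p}$ are all short and are all absorbed into the next long block $B_{h+p+1}$, the error is $\sum_i (s_{h+p+1}-s_i)\ell_i$, which you can only bound by $1\cdot(\text{total short length})$; comparing this to the available slack $\eta\cdot(\text{merged length})\ge\eta\cdot\eta_0\eta^{-1}=\eta_0$ requires the total short length to be $\lesssim\eta_0$, whereas the crude bound on the shorts is $\lesssim \eta^{-2}\eta_0$, off by $\eta^{-2}$. The ``geometric pigeonhole'' you invoke to rescue this is not spelled out, and the factor $3\eta$ in the secant bound is not, by itself, enough to close the gap. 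To finish along these lines you would need either to engineer the merging threshold recursively scale by scale (so that whenever a short block is absorbed, its length is already at most an $\eta^2$-fraction of the target block, and the $\eta^{-1}$ rounds of absorption cost at most a factor $\eta^{2\eta^{-1}}$ in the final threshold), or to choose which neighbour absorbs each short block depending on which side makes the slope gap small, and prove that one of the two choices always works. As written, the merging step is not a proof.
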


\medskip

What follows are several definitions related to shadings.

\begin{definition}[$E_{L,Y}$]
\label{E-L-def}
Let $\de\in(0,1)$ and let $(L,Y)_\delta$ be a set of lines and shading.
Define $E_{L,Y}:=\bigcup_{\ell\in L}Y(\ell)$, which can be identified as a union of $\de$-balls. If the shading $Y$ is apparent from the context, we will use $E_{L}$ to denote $E_{L,Y}$.
Moreover, for each $x\in E_{L,Y}$, define 
\begin{equation}
    L_Y(x)=\{\ell\in L:x\in Y(\ell)\}.
\end{equation}
If the shading $Y$ is apparent from the context, we use $L(x)$ to denote $L_Y(x)$.
\end{definition}

\begin{definition}
\label{lambda-dense-def}
Let $\de\in(0,1)$ and let $(L,Y)_\delta$ be a set of lines and shading.
We say $Y$ is {\bf $\la$-dense}, if $|Y(\ell)|\geq \la |N_\de(\ell)|$.
\end{definition}

\begin{definition}
Let $\de\in(0,1)$ and let $(L,Y)_\delta$ be a family of lines and shadings.
We say $(L',Y')_\de$ is a {\bf refinement} of $(L,Y)_\delta$, if $L'\subset L$, $Y'(\ell)\subset Y(\ell)$ for all $\ell\in L'$, and if
\begin{equation}
    \sum_{\ell'\in L'}|Y'(\ell')|\gtrapprox\sum_{\ell\in L}|Y(\ell)|.
\end{equation}
Similarly, for a family of $\de$-tubes and shadings $(\cT,Y)$, we say $(\cT',Y')$ is a {\bf refinement} of $(\cT,Y)$, if $\cT'\subset \cT$, $Y'(T)\subset Y(T)$ for all $T\in \cT'$, and if $\sum_{T'\in \cT'}|Y'(T')|\gtrapprox\sum_{T\in \cT}|Y(T)|$.
\end{definition}

\smallskip

Although a shading $Y(\ell)$ is a union of $2$-dimensional balls, it is indeed a one-dimensional object.
Thus, aligning with \eqref{E-sub-rho}, we make the following definition.

\begin{definition}
\label{tube-segment}
Let $0<\delta<r < 1$. 
Let $\ell$ be a line, and let $Y(\ell)$ be a shading by $\de$-balls. 
We define $(Y(\ell))_r$ as follows:
Let $\cj(\ell)$ be a minimal covering of $Y(\ell)$ by $\de\times r$-tubes contained in $N_\de(\ell)$. 
Now define $(Y(\ell))_r=\cup_{\cj(\ell)}$. 
\end{definition}

\begin{definition}
\label{two-ends-reduction}
Let $v, C>0$, and let $\de\in(0,1)$.
Let $\ell$ be a line and $Y(\ell)$ be a uniform shading by $\de$-balls.
Define $\rho=\rho(\ell;v,C)\in[\de,1]$ as
\begin{equation}
\label{two-ends-reduction-1-section-1}
    \rho:=\min\{r\in[\de,1]:|Y(\ell)|_r< C^{-1}r^{-v}\}.
\end{equation}
Consequently, since $Y(\ell)$ is uniform, for all $r\in[\de,\rho]$ and all $J\subset (Y(\ell))_\rho$, 
\begin{equation}
\label{two-ends-reduction-2-section-1}
    |Y(\ell)\cap J|_r\gtrapprox(r/\rho)^v.
\end{equation}
\end{definition}

Definition \ref{two-ends-reduction} is the standard two-ends reduction on a shading $Y(\ell)$.
Note that, with $\rho=\rho(\ell;v,C)$, the $\rho^{-1}$-dilate of $Y(\ell)\cap J$ is a $(\de/\rho,v, CC')$-set for all $\de\times\rho$-tubes $J\subset (Y(\ell))_\rho$, for some $C'\lessapprox1$.
The next two lemmas were proved in \cite{Wang-Wu}.

\begin{lemma}
\label{delta-1-rho}
Let $\de\in(0,1)$, let $\ell$ be a line, and $Y(\ell)$ be a uniform shading by $\de$-balls.
Let $0<\e_2<\e_1<1$, and let $v<\e_2$.
Suppose $Y(\ell)$ is $(\e_1,\e_2, C)$-two-ends, and let $\rho=\rho(\ell;v,C)$ be the scale given by Definition \ref{two-ends-reduction}.
Then $\rho\geq \de^{\e_1}$.
\end{lemma}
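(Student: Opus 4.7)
The plan is a direct computation: by the definition \eqref{two-ends-reduction-1-section-1}, to show $\rho\geq\de^{\e_1}$ it suffices to verify that $|Y(\ell)|_r\geq C^{-1}r^{-v}$ for every $r\in[\de,\de^{\e_1}]$. I would extract a lower bound on $|Y(\ell)|_{\de^{\e_1}}$ from the two-ends hypothesis, transfer it to all smaller scales by monotonicity of covering numbers, and then compare with $C^{-1}r^{-v}$. Uniformity of $Y(\ell)$ does not seem to be needed at this step; it is used only for the companion consequence \eqref{two-ends-reduction-2-section-1}.

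For the first step, I would take a minimal cover of $Y(\ell)$ by $\de\times\de^{\e_1}$-tubes $J\subset N_\de(\ell)$, say of cardinality $N$. Summing the two-ends bound $|Y(\ell)\cap J|\leq C\de^{\e_2}|Y(\ell)|$ over the cover gives $|Y(\ell)|\leq NC\de^{\e_2}|Y(\ell)|$, hence $N\geq C^{-1}\de^{-\e_2}$. Because $Y(\ell)\subset N_\de(\ell)\cap B^2(0,1)$ is essentially one-dimensional along $\ell$, a $\de\times\de^{\e_1}$-tube cover is comparable, up to absolute constants, to a $\de^{\e_1}$-ball cover in $\ZR^2$, so $|Y(\ell)|_{\de^{\e_1}}\gtrsim C^{-1}\de^{-\e_2}$. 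For any $r\in[\de,\de^{\e_1}]$, enclosing each $r$-ball of a minimal $r$-cover in a concentric $\de^{\e_1}$-ball produces a $\de^{\e_1}$-cover of no larger cardinality, which yields the monotonicity $|Y(\ell)|_r\geq|Y(\ell)|_{\de^{\e_1}}\gtrsim C^{-1}\de^{-\e_2}$ throughout the range.

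For the comparison step, since $v<\e_2$ and $r\geq\de$ with $\de\in(0,1)$, one has $r^{-v}\leq\de^{-v}=\de^{-\e_2}\cdot\de^{\e_2-v}$, and the positive power $\de^{\e_2-v}\to 0$ as $\de\to 0$ easily absorbs the absolute constant lost in the tube-versus-ball comparison. Thus $|Y(\ell)|_r\geq C^{-1}r^{-v}$ for every $r\in[\de,\de^{\e_1}]$, and by the definition of $\rho$ this forces $\rho\geq\de^{\e_1}$. There is no genuine obstacle here; the only technical nuisance is the constant bookkeeping in the covering-number conversion, which is handled by the strict gap $\e_2-v>0$.
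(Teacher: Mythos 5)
Your proof is correct and follows essentially the same route the paper takes: apply the two-ends bound to a minimal cover at scale $\de^{\e_1}$ to get $|Y(\ell)|_{\de^{\e_1}}\geq C^{-1}\de^{-\e_2}$, pass to smaller scales by monotonicity of covering numbers, and compare against $C^{-1}r^{-v}$ via $r^{-v}\leq\de^{-v}$ and $v<\e_2$; uniformity indeed plays no role in this step. One small remark on the bookkeeping: the constant you lose in the tube-vs-ball comparison can be avoided entirely if one reads $|Y(\ell)|_r$ as the minimal number of $\de\times r$-tubes inside $N_\de(\ell)$ covering $Y(\ell)$ (the interpretation supplied by Definition \ref{tube-segment}). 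Then the two-ends summation gives $|Y(\ell)|_{\de^{\e_1}}\geq C^{-1}\de^{-\e_2}$ with no constant, monotonicity gives $|Y(\ell)|_r\geq|Y(\ell)|_{\de^{\e_1}}$ with no constant (any $\de\times r$-tube in $N_\de(\ell)$ sits inside a $\de\times\de^{\e_1}$-tube in $N_\de(\ell)$ when $r\leq\de^{\e_1}$), and the strict inequality $\de^{-\e_2}>\de^{-v}\geq r^{-v}$ closes the argument for every $\de\in(0,1)$, not just sufficiently small $\de$. Your version is fine in the regime the lemma is actually used, but the tube interpretation makes the conclusion hold unconditionally.
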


\begin{lemma}
\label{two-ends-shading-lem}
Let $\de\in(0,1)$, let $\ell$ be a line, and $Y(\ell)$ be a shading by $\de$-balls.
If $Y(\ell)$ is $(\e_1,\e_2,C)$-two-ends and $Y'(\ell)$ is a refinement of $Y(\ell)$, then there exists $C'\lessapprox1$ such that $Y'(\ell)$ is $(\e_1,\e_2,CC')$-two-ends.
\end{lemma}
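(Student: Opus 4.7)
The plan is a short direct calculation packaging two observations: the two-ends condition tests $Y(\ell)$ against subsets of itself (so monotonicity under containment is free), and by definition a refinement loses only a $\lessapprox 1$ factor in total mass. The only subtlety is that this mass loss is sub-polynomial ($c_\e \de^{-\e}$) rather than an absolute constant, so it is absorbed into a worsened two-ends constant $CC'$ with $C' \lessapprox 1$, while leaving the exponent $\e_2$ untouched. In particular, there is no need to reprove any structural non-concentration property for $Y'(\ell)$ from scratch; it is inherited from $Y(\ell)$.

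Concretely, I would fix an arbitrary $\de \times \de^{\e_1}$-tube $J \subset N_\de(\ell)$ and chain three inequalities. First, monotonicity from $Y'(\ell) \subset Y(\ell)$ gives
\[
|Y'(\ell) \cap J| \leq |Y(\ell) \cap J|.
\]
Second, the $(\e_1,\e_2,C)$-two-ends hypothesis for $Y(\ell)$, applied to this same $J$, yields $|Y(\ell) \cap J| \leq C \de^{\e_2} |Y(\ell)|$. Third, the hypothesis that $Y'(\ell)$ is a refinement of $Y(\ell)$ (in the sense of finite unions of $\de$-balls) gives $|Y(\ell)| \leq C' |Y'(\ell)|$ for some $C' \lessapprox 1$, namely $C' = c_\e \de^{-\e}$ absorbed into the $\lessapprox$ notation. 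Multiplying these together produces
\[
|Y'(\ell) \cap J| \leq CC' \de^{\e_2} |Y'(\ell)|,
\]
which is the desired $(\e_1,\e_2,CC')$-two-ends estimate for $Y'(\ell)$ since $J$ was arbitrary. I do not anticipate any obstacle: the lemma is a bookkeeping statement asserting that the two-ends property is stable under refinement up to a $\lessapprox 1$ loss, and this is immediate from the definitions together with the fact that the quantifier in the two-ends condition ranges over tubes $J$, not over the shading.
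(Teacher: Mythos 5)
Your proof is correct and is essentially the same short argument as the one cited (the paper defers this lemma to \cite{Wang-Wu}, where it is proved by exactly this chain: monotonicity $|Y'(\ell)\cap J|\leq|Y(\ell)\cap J|$, the two-ends hypothesis for $Y(\ell)$, and the refinement bound $|Y(\ell)|\lessapprox|Y'(\ell)|$ absorbed into $C'$). Nothing is missing.
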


\smallskip

Recall Definition \ref{gamma-Y-def}.
The next two lemmas are about properties of $\ga_Y$.

\begin{lemma}
\label{gamma-y-lemma}
Let $0<\de\leq r\leq1$ and let $\ell$ be a line.
Suppose $Y(\ell)$ is a shading by $\de$-balls and $\tilde Y(\ell)$ is another shading by $r$-balls such that each $r$-ball in $\tilde Y(\ell)$ contains $\gtrsim d$ many $\de$-balls in $Y(\ell)$ for some $d\in[1,r/\de]$.
Then $\ga_{\tilde Y,t}(\ell)\lesssim(\de/r)^{-t}d^{-1}\ga_{Y,t}(\ell)$.
\end{lemma}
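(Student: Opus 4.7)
The plan is to unpack the definition of $\ga_{\tilde Y, t}(\ell)$ directly and bound it pointwise in the supremum by counting $r$-balls in $\tilde Y(\ell)$ through their $\de$-ball constituents from $Y(\ell)$. Fix $s \in [r,1]$ and $x \in \ell$. The quantity $r^{-2}|B(x,s)\cap \tilde Y(\ell)|$ is comparable to the number $N$ of $r$-balls of $\tilde Y(\ell)$ meeting $B(x,s)$. Each such $r$-ball lies inside $B(x, s+r) \subset B(x, 2s)$ (using $s \geq r$) and, by hypothesis, contains $\gtrsim d$ many disjoint $\de$-balls of $Y(\ell)$. Consequently
\[
    Nd\,\de^{2} \;\lesssim\; |B(x,2s)\cap Y(\ell)|,
\]
which rearranges to $|B(x,s)\cap \tilde Y(\ell)| \lesssim (r^2/(d\de^2))\,|B(x,2s)\cap Y(\ell)|$.

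Next I would multiply by the scaling factor $(r/s)^t r^{-2}$ appearing in the definition of $\ga_{\tilde Y, t}(\ell)$ and split $(r/s)^t = (r/\de)^t (\de/s)^t$ to obtain
\[
    (r/s)^t\, r^{-2}|B(x,s)\cap \tilde Y(\ell)| \;\lesssim\; (r/\de)^t\, d^{-1}\,(\de/s)^t\, \de^{-2}|B(x,2s)\cap Y(\ell)|.
\]
The last three factors are, up to an absolute constant (absorbing the harmless $2$ from replacing $s$ by $2s$, which alters $(\de/s)^t$ by $2^t$), bounded by $\ga_{Y,t}(\ell)$. Taking the supremum over $x \in \ell$ and $s \in [r,1]$ on the left, and recognizing $(r/\de)^t = (\de/r)^{-t}$, yields the stated bound.

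The argument is essentially a bookkeeping exercise; there is no real obstacle. The one place to be careful is the enlargement from $B(x,s)$ to $B(x,s+r)$, which is what forces the use of $2s$ inside $\ga_{Y,t}$ and costs only an innocuous absolute constant. No additional covering lemmas or pigeonholing are needed; the hypothesis on multiplicities $d$ is used in exactly one place, namely the conversion $Nd\de^2 \lesssim |B(x,2s)\cap Y(\ell)|$, which is the only nontrivial step.
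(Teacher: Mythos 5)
Your proof is correct and is essentially the paper's own argument: both convert the $r$-ball count for $\tilde Y(\ell)$ in a ball $B(x,s)$ into a $\de$-ball count for $Y(\ell)$ using the multiplicity hypothesis ($N d\,\de^2\lesssim |B(x,2s)\cap Y(\ell)|$) and then split $(r/s)^t=(r/\de)^t(\de/s)^t$ to compare with $\ga_{Y,t}(\ell)$; the paper just runs this at the extremizing pair $(x,s)$ instead of pointwise and suppresses the enlargement constant you track. The only tiny point worth noting is that when $2s>1$ the radius $2s$ falls outside the range $[\de,1]$ in Definition \ref{gamma-Y-def}, but since $Y(\ell)\subset B^2(0,1)$ one covers $B(x,2s)\cap N_\de(\ell)$ by $O(1)$ unit balls centered on $\ell$, which costs only another absolute constant.
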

\begin{proof}
Let $\rho$ be the scale such that for some ball $B(x,\rho)$,
\begin{equation}
    \ga_{\tilde Y,t}(\ell)\sim (r/\rho)^{t}|B(x,\rho)\cap \tilde Y(\ell)|_r.
\end{equation}
Since each $r$-ball in $\tilde Y(\ell)$ contains $\sim (r/\de)d$ many $\de$-balls in $Y(\ell)$, we have
\begin{equation}
    \ga_{Y,t}(\ell)\gtrsim (\de/\rho)^{t}|B(x,\rho)\cap Y(\ell)|_r\cdot d.
\end{equation}
Therefore,
\begin{equation}
    \ga_{\tilde Y,t}(\ell)\lesssim(\de/r)^{-t}d^{-1}\ga_{Y,t}(\ell). \qedhere
\end{equation}
\end{proof}

\begin{lemma}
\label{multi-scale-lem}
Let $(L,Y)_\de$ be a set of lines and shading such that $\{Y(\ell):\ell\in L\}$ is a set of uniform shadings admitting a uniform branching function (recall Lemma \ref{uniform-sets-branching-lem}).
Let $\eta>0$ and let $\eta_0=\eta_0(\eta)=\eta^{2\eta^{-1}}$. 
Let $t\in[0,1]$ be given.
Then there exist  $r\in(\delta^{1-\eta_0\eta^{-1}},1]$ and $s\in[0,1]$ such that the following is true:

Let $\tilde{Y}(\ell) = (Y(\ell))_r$ for all $\ell\in L$. 
We have
\begin{enumerate}[(a)]
    \item $\ga_{\tilde Y, t}(\ell)\lesssim \ga_{Y,t}(\ell)$. 
    \item \label{it: H} For each $\delta \times r$-tube $J\subset (Y(\ell))_r$, the $r^{-1}$-dilate of $Y(\ell)\cap J$ along $\ell$ is a $(\delta/r, s, (\delta/r)^{-9\eta})$-set, and
    $\log_{1/\de}\big(\frac{|Y(\ell)|_{\de}}{|Y(\ell)|_{r}}\big)\leq (s+9\eta) \log_{1/\de}(r/\de)$.
\end{enumerate}
\end{lemma}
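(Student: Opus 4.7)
My plan is to apply the multi-scale decomposition Lemma \ref{multiscale-prop} to the (rescaled) uniform branching function of $\{Y(\ell)\}_{\ell\in L}$ and then read off $r$ and $s$ from the last piece. Since $Y(\ell)$ is essentially one-dimensional (it lives in a $\delta$-neighborhood of $\ell$), its uniform branching function rescales to a non-decreasing $1$-Lipschitz function $f:[0,1]\to[0,1]$ with $f(u)\approx \log_{1/\delta}|Y(\ell)|_{\delta^u}/\log_{1/\delta}(\delta^{-1})$ up to a uniform $|\log\delta|^{-1}$ error. Applying Lemma \ref{multiscale-prop} to $f$ with parameter $\eta$ produces a partition $0=A_1<A_2<\cdots<A_{H+1}=1$ (with $A_{h+1}-A_h\geq \eta_0\eta^{-1}$) together with increasing slopes $0\leq s_1<s_2<\cdots<s_H\leq 1$. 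I set $r:=\delta^{A_H}$ and $s:=s_H$; the minimum piece-length bound $1-A_H\geq \eta_0\eta^{-1}$ gives $r\geq \delta^{1-\eta_0\eta^{-1}}$, which is the intended size constraint on $r$.

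The verification of (b) is where the multi-scale inequalities do their main work. The upper bound $f(1)-f(A_H)\leq (s_H+3\eta)(1-A_H)$ is exactly the second inequality in (b), with $3\eta\leq 9\eta$. The lower bound $f(v)-f(A_H)\geq s_H(v-A_H)-\eta(1-A_H)$ for $v\in[A_H,1]$, combined with the uniformity of $Y(\ell)$, translates to
\[
|Y(\ell)\cap J\cap B(x,\rho)|_\delta\ \lesssim\ (\rho/r)^{s_H}(\delta/r)^{-\eta}|Y(\ell)\cap J|_\delta
\]
for every $\delta\times r$-tube $J\subset (Y(\ell))_r$ and every $\rho\in[\delta,r]$. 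After dilating by $r^{-1}$ along $\ell$, and using $(\delta/r)^{-\eta}\leq (\delta/r)^{-9\eta}$, this is exactly the $(\delta/r,s,(\delta/r)^{-9\eta})$-set condition in (b)(i).

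For (a), I apply Lemma \ref{gamma-y-lemma} with $d\sim |Y(\ell)|_\delta/|Y(\ell)|_r$ (which by uniformity is the number of $\delta$-balls per $r$-tube); this yields $\ga_{\tilde Y,t}(\ell)\lesssim (r/\delta)^t d^{-1}\ga_{Y,t}(\ell)$. The multi-scale lower bound gives $d\gtrsim (r/\delta)^{s_H-\eta}$, so when $s_H\geq t$ the prefactor $(r/\delta)^{t-(s_H-\eta)}\leq (r/\delta)^{\eta}\leq \delta^{-\eta}$, which is absorbed into $\lesssim$ after choosing $\eta$ small enough relative to the target $\lessapprox$ convention.

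The main obstacle is the regime $s_H<t$, where the naive use of Lemma \ref{gamma-y-lemma} is too crude. However, $s_H<t$ forces $f(1)\leq s_H+\eta<t+\eta$, i.e., $|Y(\ell)|_\delta\lesssim \delta^{-t-\eta}$, so the shading is already very sparse. In this case I would compute $\ga_{Y,t}(\ell)$ and $\ga_{\tilde Y,t}(\ell)$ directly by writing each as $\delta^{\min_u g(u)}$ for a suitable exponent function (for $\ga_{Y,t}$ this is $g(u)=t(1-u)-f(1)+f(u)$, essentially $f(u)-tu$ shifted); in the $s_H<t$ regime the slope $f'(u)-t$ is negative essentially everywhere, so both minima are attained at the right endpoint with value $\approx 0$. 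Comparing the two minima directly—using $g(A_H)-g(1)=(t-s_H\pm O(\eta))(1-A_H)$—shows $\ga_{\tilde Y,t}(\ell)/\ga_{Y,t}(\ell)\lesssim 1$ up to a $\delta^{-O(\eta)}$ factor, which again is absorbed into the implicit constants.
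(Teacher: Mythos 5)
Your treatment of part (b) and of the high-slope case of part (a) matches the paper's argument (multi-scale decomposition of the branching function, last interval, Lemma \ref{gamma-y-lemma}), but there is a genuine gap in part (a) in the regime $s_H<t$: with your choice $r=\delta^{A_H}$, conclusion (a) as stated is simply not true, and your direct comparison of the two minima does not rescue it. The multi-scale bounds only control the increments of $f$ at the endpoints of each piece (plus a weak lower bound), so inside the last-but-final portion of $[0,A_H]$ the branching function may rise with slope $1$ over a stretch of length $\sim\eta$. Concretely, take $t=1/2$ and a uniform shading whose branching function is $0$ on $[0,\tfrac12-\tfrac{3\eta}{2}]$, has slope $1$ on $[\tfrac12-\tfrac{3\eta}{2},\tfrac12]$, and slope $\tfrac14$ on $[\tfrac12,1]$; this is compatible with an output of Lemma \ref{multiscale-prop} with $A_H=\tfrac12$, $s_1=0$, $s_H=\tfrac14<t$. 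One checks $\ga_{Y,t}(\ell)\sim 1$, while the coarsened shading $(Y(\ell))_{\delta^{1/2}}$ puts $\delta^{-3\eta/2}$ segments inside a ball of radius $\delta^{1/2-3\eta/2}$, so $\ga_{\tilde Y,t}(\ell)\sim\delta^{-3\eta/4}$. Thus the ratio is a genuine negative power of $\delta$, not an absolute constant; and $\delta^{-O(\eta)}$ is neither ``$\lesssim$'' nor ``$\lessapprox 1$'' in the paper's conventions, so asserting that it ``is absorbed into the implicit constants'' proves a strictly weaker lemma. Whether that weaker form still suffices would require re-auditing the exponent bookkeeping and the induction in Proposition \ref{two-ends-furstenberg-prop}, which you do not do.

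This is exactly the point where the paper departs from your route: when $s_H<t+\eta$ it does \emph{not} take $r=\delta^{A_H}$, but enlarges $r$ to the largest scale such that every $r$-ball meeting $Y(\ell)$ contains at least $(r/\delta^{A_H})^{t}$ of the $\delta^{A_H}$-segments. By maximality and uniformity, the coarsened shading is then non-concentrated at dimension $t$ at all scales above $r$, giving $\ga_{\tilde Y,t}(\ell)\lesssim 1\leq\ga_{Y,t}(\ell)$ with an absolute constant. The price is that $s$ must then be defined by $|Y(\ell)|_\delta/|Y(\ell)|_r\sim(r/\delta)^{s}$ (it need not equal $s_H$), and the $(\delta/r,s,(\delta/r)^{-9\eta})$-set property in (b) has to be verified by splicing the two ranges $[\delta,\delta^{A_H}]$ (controlled by the branching function) and $[\delta^{A_H},r]$ (controlled by the saturation condition); this is where the $9\eta$ error budget comes from. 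Your write-up needs this second mechanism (or some substitute yielding an absolute-constant bound in (a)) in the low-slope case; also note that your case split at $s_H\geq t$ rather than $s_H\geq t+\eta$ leaks an additional $\delta^{-\eta}$ even in the ``good'' case.
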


\begin{proof}

Let $f$ be the uniform branching function of $\{Y(\ell):\ell\in L\}$.
Apply Lemma \ref{multiscale-prop} to $f$ to obtain a set of scales 
\begin{equation}
    0=A_1<A_2<\cdots<A_{H+1}=1
\end{equation}
and a sequence 
\begin{equation}
    0\leq s_1< s_2<\cdots< s_{H}\leq 1
\end{equation}
such that the following is true
\begin{equation}
    A_{h+1}-A_h\geq\eta_0\eta^{-1};
\end{equation}
\begin{align}
\label{lower-bound}
    f(x)\geq& f(A_h)+s_h(x-A_h)-\eta(A_{h+1}-A_h), \text{ for all }x\in[A_h,A_{h+1}],\\ \label{upper-bound}
    &f(A_{h+1})\leq f(A_h)+(s_h+3\eta)(A_{h+1}-A_h).
\end{align}

We consider two distinct cases.

\smallskip

Suppose $ s_H \geq t+\eta$.
Take $r=\delta^{A_H}$ and $s_H'=s_H$ so that each $\de\times r$-segment in $(Y(\ell))_r$ contains $\gtrsim(r/\de)^{t}$ many $\de$-balls in $Y(\ell)$.
Thus, by Lemma \ref{gamma-y-lemma}, we have $\gamma_{\tilde{Y}, t}(\ell) \leq \gamma_{Y,t}(\ell)$. 

Suppose $s_H < t+\eta$.
Take $r\geq \delta^{A_H}$ be the largest scale such that for all $B(x,r)\cap Y(\ell)\not=\varnothing$, we have
\begin{equation}
    |B(x,r)\cap (Y(\ell))_{\delta^{A_H}}|_{\delta^{A_H}} \geq (r/\delta^{A_H})^{t}.
\end{equation}
The choice of $r$ implies $\gamma_{\tilde{Y}, t}(\ell)=1$.
Thus, item $(a)$ follows from the simple fact that $1 \leq \gamma_{Y, t}(\ell)$.
Take $s_H'$ be such that $\frac{|Y(\ell)|_{\delta}}{|Y(\ell)|_{r}} \sim (r/\delta)^{s_H'}$.
By \eqref{lower-bound}, we know that for all $\delta\times r$-tube $J\subset (Y(\ell))_r$,
\begin{equation}
    |J\cap Y(\ell)|_\de\approx (r/\de)^{s'_H}\gtrsim (\de^{A_H}/\de)^{-\eta}(r/\de^{A_H})^t(\de^{A_H}/\de)^{s_H}.
\end{equation}
Since the sequence $\{s_j\}$ is increasing, for each $\rho\in[\de,r]$, by \eqref{lower-bound} and \eqref{upper-bound}, 
\begin{equation}
     |(Y(\ell))_{\rho}|_\de\lesssim (r/\de)^{3\eta}(\rho/\de)^{s_H}.
\end{equation}
Therefore, for all $\rho\in[\de,r]$,
\begin{equation}
    |J\cap Y(\ell)|_\rho\gtrapprox (r/\de)^{-4\eta}(r/\de^{A_H})^{t}(\de^{A_H}/\rho)^{s_H}.
\end{equation}
On the other hand, by \eqref{upper-bound}, we have
\begin{equation}
    (r/\de)^{s'_H}\lesssim (\de^{A_H}/\de)^{3\eta}(r/\de^{A_H})^t(\de^{A_H}/\de)^{s_H}.
\end{equation}
This shows
\begin{equation}
    |J\cap Y(\ell)|_\rho\gtrapprox (r/\de)^{-9\eta}(r/\de)^{s_H'},
\end{equation}
which is what we want for item $(b)$.
\qedhere

\end{proof} 

\medskip

We end with two standard lemmas, whose proof can be found in \cite{Wang-Wu}.

\begin{lemma}
\label{rich-point-refinement}
Let $\de\in(0,1)$ and let $(L,Y)_\delta$ be a set of lines in $\ZR^n$ and shading.
There exists a $\mu\geq1$, a set $E^\mu\subset E_L$, and a  refinement $(L',Y')_\de$ of $(L,Y)_\de$ so that 
\begin{enumerate}
    \item $Y'(\ell)$ is a refinement of $Y(\ell)$ for all $\ell\in L'$.
    \item $\#L_{Y'}(x)\sim\mu$ for all $x\in E_{L,Y'}$.
    \item $Y'(\ell)= E^\mu\cap N_\de(\ell)$ for all $\ell\in L'$.
    \item $\mu\approx |E_{L,Y'}|^{-1}\sum_{\ell\in L'}|Y(\ell')|$.
\end{enumerate}
\end{lemma}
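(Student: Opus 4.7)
The plan is a dyadic pigeonhole on the multiplicity function $m(x) := \#L_Y(x)$ on $E_L$. Using the double counting identity
\[
\sum_{\ell \in L}|Y(\ell)| = \int_{E_L} m(x)\,dx
\]
and the fact that $m$ takes at most $O(\log\de^{-1})$ dyadic values (since $\#L \leq \de^{-O(1)}$), I would pigeonhole the level-set decomposition $E_L = \bigcup_{k\geq 0} E_k$ with $E_k := \{x \in E_L : 2^k \leq m(x) < 2^{k+1}\}$ to produce a dyadic level $\mu = 2^k$ and a set $E^\mu := E_k$, for which $m(x) \sim \mu$ on $E^\mu$ and $\mu\,|E^\mu| \gtrapprox \sum_\ell |Y(\ell)|$.

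Setting $Y'(\ell) := Y(\ell) \cap E^\mu$ and $L' := \{\ell \in L : Y'(\ell) \neq \varnothing\}$, the items of the conclusion follow directly: (1) is inclusion by construction; (2) holds since $\#L_{Y'}(x) = m(x) \sim \mu$ on $E_{L,Y'} = E^\mu$; (3) amounts to identifying $Y(\ell) \cap E^\mu$ with $E^\mu \cap N_\de(\ell)$ (read as the canonical $\de$-balls of $E^\mu$ attached to $\ell$ via the shading, which is consistent since $Y(\ell) \subset N_\de(\ell)$); and (4) is the dual double counting
\[
\sum_{\ell \in L'}|Y'(\ell)| = \int_{E^\mu} m(x)\,dx \sim \mu\,|E^\mu| = \mu\,|E_{L,Y'}|.
\]
The same computation gives $\sum_{\ell' \in L'}|Y'(\ell')| \gtrapprox \sum_{\ell\in L}|Y(\ell)|$, so $(L',Y')$ is a genuine refinement of $(L,Y)$.

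The entire argument is routine combinatorial pigeonholing and presents no real obstacle — the only subtlety is item (3), where strictly speaking a $\de$-ball $x \in E^\mu \cap N_\de(\ell)$ need not lie in $Y(\ell)$ (it could inherit its membership in $E^\mu$ from some other line $\ell' \in L_Y(x)$), but the natural identification above with $Y(\ell) \cap E^\mu$ keeps items (1)--(4) mutually consistent.
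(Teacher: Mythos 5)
The paper defers the proof of this lemma to \cite{Wang-Wu}, so there is no internal proof to compare against; I am evaluating the proposal on its own terms, and it has a genuine gap. Your one-shot pigeonhole correctly produces a dyadic level $\mu$ and a set $E^\mu$ on which the multiplicity is $\sim\mu$, with $\mu|E^\mu|\gtrapprox\sum_\ell|Y(\ell)|$, and setting $Y'(\ell)=Y(\ell)\cap E^\mu$ gives items (2) and (4) and makes $(L',Y')$ a refinement of $(L,Y)$ in the aggregate sense. The problem is item (1). By Definition~\ref{shading-def} and the refinement convention of this paper, ``$Y'(\ell)$ is a refinement of $Y(\ell)$'' means \emph{both} $Y'(\ell)\subset Y(\ell)$ \emph{and} $|Y'(\ell)|\gtrapprox|Y(\ell)|$, i.e.\ the per-line mass is essentially preserved. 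You assert that (1) is ``inclusion by construction,'' but the construction $Y'(\ell)=Y(\ell)\cap E^\mu$ does not control the per-line ratio $|Y'(\ell)|/|Y(\ell)|$ at all: a single pigeonhole on $\#L_Y(x)$ preserves the total $\sum_\ell|Y'(\ell)|$, but it is perfectly consistent with a sizeable collection of lines $\ell$ having $|Y(\ell)\cap E^\mu|\ll|Y(\ell)|$ while a few heavy lines carry the rest. Note this property is actually invoked downstream: in Step~1 of the proof of Proposition~\ref{two-ends-furstenberg-prop}, the lemma's output is immediately used to conclude that the refined shading stays $\gtrapprox\tilde\la$-dense, which is exactly the per-line lower bound you have not established.

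The obvious patch --- drop the lines with $|Y'(\ell)|\ll|Y(\ell)|$ by another pigeonhole --- re-breaks item (2), since discarding lines can push the multiplicity $\#L'_{Y'}(x)$ well below $\mu$ on part of $E^\mu$, and then re-pigeonholing on multiplicity re-breaks item (1), and so on. Getting (1), (2) and (4) to hold simultaneously therefore needs more than one pass: one has to iterate and argue the process terminates without an unaffordable loss (using, e.g., that the multiplicity level is essentially non-increasing and bounded). This is the missing idea in your proposal. Finally, your remark on item (3) is honest but does not resolve the issue: $Y(\ell)\cap E^\mu$ is genuinely a proper subset of $E^\mu\cap N_\de(\ell)$ whenever some $\de$-ball of $E^\mu$ sits in $N_\de(\ell)$ via a different line's shading, and for (3) to hold \emph{together with} $Y'(\ell)\subset Y(\ell)$ one must arrange the geometric inclusion $E^\mu\cap N_\de(\ell)\subset Y(\ell)$ for every $\ell\in L'$, which your $E^\mu$ is not designed to satisfy.
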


\begin{lemma}
\label{broad-narrow-lem}
Let $\de\in(0,1)$, and let $(L,Y)_\delta$ be a set of lines in $\ZR^n$ and shading. 
For all $x\in E_L$, there exists a $\rho=\rho(x)\in[10\de,1]$ so that the following is true.

\begin{enumerate}
    \item There exists a refinement  $L'(x)\subset L(x)$ such that $\ang(\ell,\ell')+\de\leq 2\rho$ for any $\ell,\ell'\in L'(x)$.
    \item There are two disjoint subsets $L_1,L_2\subset L'(x)$ of lines such that $\# L_1, \# L_2\gtrapprox \# L'(x)$, and $\rho\geq\ang(\ell_1,\ell_2)\gtrapprox \rho$ for all $\ell_1\in L_1,\ell_2\in L_2$.
\end{enumerate}
\end{lemma}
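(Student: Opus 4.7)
The plan is to prove Lemma \ref{broad-narrow-lem} by a multi-scale \emph{split-or-shrink} iteration on the directional distribution of $L(x)$. Fix a small parameter $\eta>0$, whose value will depend on the final $\e$ in the $\gtrapprox$ quantifier. I build inductively a decreasing family of directional arcs $I_0\supset I_1\supset\cdots$ and nested subfamilies $L(x)=L_0\supset L_1\supset\cdots$ such that the directions of the lines in $L_k$ all lie in $I_k$, with $\rho_k:=|I_k|$ strictly decreasing. I initialize $I_0$ as an arc in direction space of length $\rho_0=O(1)$ containing every direction of $L(x)$, and $L_0=L(x)$.

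At step $k$, I trisect $I_k$ into three consecutive equal sub-arcs $I_k^L$, $I_k^M$, $I_k^R$ of length $\rho_k/3$, and let $L_k^L$, $L_k^M$, $L_k^R\subset L_k$ denote the lines whose directions lie in the corresponding sub-arc. If both outer clusters are large, namely $\#L_k^L\ge\eta\#L_k$ and $\#L_k^R\ge\eta\#L_k$ (the \emph{split} case), the iteration terminates and I output
\[
\rho(x)=\rho_k,\qquad L'(x)=L_k,\qquad L_1=L_k^L,\qquad L_2=L_k^R.
\]
Otherwise (the \emph{shrink} case), at least one outer cluster has fewer than $\eta\#L_k$ lines; discarding it leaves a subfamily of cardinality $\ge(1-\eta)\#L_k$ whose directions fit in an arc of length $2\rho_k/3$, and I set $L_{k+1}$ and $I_{k+1}$ accordingly, so $\rho_{k+1}\le(2/3)\rho_k$. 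Since $\rho_k$ contracts geometrically, the iteration halts in at most $K\lesssim\log\de^{-1}$ steps, either by triggering a split or when $\rho_k$ reaches the floor $10\de$.

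To verify the conclusions in the split case: condition (1) holds because every direction of $L'(x)=L_k$ lies in $I_k$ of length $\rho(x)$, so $\ang(\ell,\ell')\le\rho(x)\le 2\rho(x)-\de$ (using $\rho(x)\ge 10\de$). For condition (2), $L_1$ and $L_2$ occupy the outer thirds $I_k^L$, $I_k^R$, which are separated by the central third $I_k^M$ of width $\rho(x)/3$ while together sitting inside $I_k$, so $\ang(\ell_1,\ell_2)\in[\rho(x)/3,\,\rho(x)]\sim\rho(x)$. A multiplicative count gives $\#L_k\ge(1-\eta)^K\#L(x)$ and $\#L_i\ge\eta\#L_k$; choosing $\eta$ small enough in terms of $\e$ (for instance $\eta\le\tfrac{\e}{2}\log(3/2)$) makes $(1-\eta)^K\ge\de^{\e}$, yielding the $\gtrapprox$ bounds for both $L'(x)$ and $L_1, L_2$.

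The only real technical wrinkle is the edge case in which every step is a shrink and the iteration terminates only upon hitting the floor $\rho_k\le 10\de$ without a split ever occurring. In the paper's running context the lines in $L$ are $\de$-separated, so the directions of any cluster of concurrent lines fitting in a $10\de$-arc are forced to be $\de$-separated, and therefore $\#L_K=O(1)$. I will dispose of this case by setting $\rho(x)=10\de$ and choosing any two distinct lines of $L_K$ as $L_1,L_2$: their angular separation lies between $\gtrsim\de$ and $10\de$, i.e.\ $\sim\rho(x)$, and the cardinality requirement $\#L_i\ge 1\sim\#L_K$ holds automatically. The remainder of the argument is purely combinatorial pigeonholing.
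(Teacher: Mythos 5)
Your split-or-shrink trisection is a reasonable structural outline, but the way you control the accumulated loss does not actually produce a refinement in the sense this paper uses. With a fixed constant $\eta$, each shrink step costs a factor $(1-\eta)$, and there are $K\lesssim\log\de^{-1}$ steps, so $\#L'(x)\geq(1-\eta)^K\#L(x)\approx\de^{c\eta}\#L(x)$. That is a \emph{polynomial} loss in $\de$. The paper's notion of refinement requires a single set $L'(x)\subset L(x)$ with $\#L'(x)\geq c_\e\de^\e\#L(x)$ for \emph{every} $\e>0$, and $\de^{c\eta}$ with fixed $\eta$ fails this for $\e<c\eta$. Your attempted remedy --- ``choosing $\eta$ small enough in terms of $\e$'' --- does not repair this: the stopping time, and hence the output $L'(x)$, depends on $\eta$, so this recipe produces a \emph{different} set $L'_\eta(x)$ for each $\e$, and the quantifiers are now in the wrong order. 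No single one of these sets is a refinement of $L(x)$.

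The fix is small but needed: take $\eta$ to decay slowly with $\de$ rather than with $\e$, for instance $\eta=(\log\de^{-1})^{-1}$. Then over the $K\lesssim\log\de^{-1}$ shrink steps the product $(1-\eta)^K\gtrsim 1$ is a genuine constant independent of both $\de$ and $\e$, so $L'(x)=L_K$ is an honest $\gtrsim1$-refinement, while $\eta=(\log\de^{-1})^{-1}$ is still $\gtrapprox 1$, so $\#L_1,\#L_2\geq\eta\#L'(x)\gtrapprox\#L'(x)$ as required. With that single change, the rest of your argument --- the trisection geometry giving the two-sided angular bound $\rho/3\leq\ang(\ell_1,\ell_2)\leq\rho$, and the edge case where the iteration bottoms out at $\rho_K\sim10\de$ and $\de$-separation through a common $\de$-ball forces $\#L_K=O(1)$ --- goes through.
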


\bigskip

\section{Proof of the main theorem}

Let us restate Theorem \ref{two-ends-furstenberg-general-intro} below.

\begin{theorem}
\label{two-ends-furstenberg-general}
Let $(L,Y)_\de$ be a set of $\de$-separated lines in $\ZR^2$ with a $(\e_1, \e_2)$-two-ends, $\lambda$-dense shading.
Let $t\in(0,2)$, and $t^\ast=\min\{t, 2-t\}$. 
Suppose the set of lines $L$ is a Katz--Tao $(\de, t)$-set.
Then for any $\e>0$,
\begin{equation}\label{eq: Thm2.1}
    |E_L|\geq c_\e\de^{\e}\de^{t\e_1/2} \la^{1/2}\de^{(t-1)/2}\ga_{Y,t^\ast}^{-1/2}\sum_{\ell\in L}|Y(\ell)|.
\end{equation}
\end{theorem}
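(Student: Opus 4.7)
The plan is to extend the broad--narrow plus multi-scale strategy from the $t=1$ case \cite{Wang-Wu}, substituting the Furstenberg set inequality of \cite{ren2023furstenberg} (Theorem \ref{furstenberg-thm}) as the key incidence input in place of the direct $L^2$ bound that suffices when $t=1$. The two new factors, $\de^{(t-1)/2}$ and $\ga_{Y,t^*}^{-1/2}$, must be extracted from the Katz--Tao $(\de,t)$ structure of the lines and from the pair-intersection bound $|Y(\ell_1)\cap Y(\ell_2)|\le\ga_{Y,t^*}\de^2\rho^{-t^*}$, which follows from applying Definition \ref{gamma-Y-def} at radius $\de/\rho$ (the extent of $N_\de(\ell_1)\cap N_\de(\ell_2)$ along $\ell_1$ when $\ang(\ell_1,\ell_2)\sim\rho$).

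First I would run the standard reductions: apply Lemma \ref{uniformization} to each $Y(\ell)$, invoke Lemma \ref{uniform-sets-branching-lem} to produce a common uniform branching function for $\{Y(\ell)\}_{\ell\in L}$, pigeonhole so $|Y(\ell)|\sim\la_0\de$ is essentially constant in $\ell$, and apply Lemma \ref{rich-point-refinement} to reach a $\mu$-rich configuration with $\#L(x)\sim\mu$ on $E_L$ and $\mu|E_L|\approx\sum_\ell|Y(\ell)|$. Lemma \ref{two-ends-shading-lem} guarantees the two-ends property survives refinement, and Lemma \ref{katz-tao-set-lem} lets us pass the Katz--Tao $(\de,t)$-hypothesis to any coarser scale that arises. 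The theorem then reduces to the multiplicity bound $\mu\lessapprox\de^{-t\e_1/2}\la^{-1/2}\de^{-(t-1)/2}\ga_{Y,t^*}^{1/2}$.

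Next I would combine the two-ends reduction and the multi-scale decomposition. Definition \ref{two-ends-reduction} with parameter $v<\e_2$, together with Lemma \ref{delta-1-rho}, yields (after pigeonholing) a common scale $\rho\in[\de^{\e_1},1]$ on which the $\rho^{-1}$-dilate of $Y(\ell)\cap J$ is a $(\de/\rho,v)$-set for every $\de\times\rho$-tube $J\subset(Y(\ell))_\rho$. Lemma \ref{multi-scale-lem} then refines to a coarser scale $r\in[\rho,1]$ and an exponent $s\in[0,1]$ such that $\wt Y(\ell):=(Y(\ell))_r$ satisfies $\ga_{\wt Y,t}\lesssim\ga_{Y,t}$, and on each $\de\times r$-tube the $r^{-1}$-dilate of $Y(\ell)$ is a $(\de/r,s,(\de/r)^{-9\eta})$-set. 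This decomposes the problem into a coarse incidence question at scale $r$ together with a fine fractal-dimension question inside each $\de\times r$-tube.

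Finally I would apply Lemma \ref{broad-narrow-lem} pointwise on a maximal $r$-separated subset of $E_L$, pigeonhole to a common angular scale $\rho_2$, and feed the coarsened configuration $(L,\wt Y)_r$ into the Ren--Wang Furstenberg estimate to lower bound $|E_L|_r$; the $\de$-scale bound on $|E_L|$ is recovered from $|E_L|_r$ by using the $(\de/r,s,(\de/r)^{-9\eta})$-set structure inside each $\de\times r$-tube. Of the exponents, the two-ends factor $\de^{t\e_1/2}$ comes from $\rho\ge\de^{\e_1}$, the factor $\de^{(t-1)/2}$ from the Katz--Tao $(\de,t)$ pair count that enters the Ren--Wang step, and $\ga_{Y,t^*}^{-1/2}$ from inserting the pair-intersection bound above into the bilinear $L^2$ step. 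The main obstacle I anticipate is the three-scale bookkeeping ($\rho$ from two-ends, $r$ from multi-scale, $\rho_2$ from broad--narrow) together with the parameters $v,s,\eta$, and checking that the Ren--Wang estimate applies uniformly to the coarsened Katz--Tao $(r,t)$ configuration. The dichotomy $t^*=\min\{t,2-t\}$ is also delicate: for $t\le1$ the pair intersection is dimensionally limited by the line-set side ($t^*=t$), while for $t>1$ it is limited by the planar shading dimension ($t^*=2-t$), and a single argument must treat both regimes through the single parameter $\ga_{Y,t^*}$.
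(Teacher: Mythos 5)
Your proposal captures the standard preprocessing steps (uniformization, common branching function, pigeonholing, rich-point refinement, two-ends reduction via Definition \ref{two-ends-reduction}, and the multi-scale decomposition via Lemma \ref{multi-scale-prop}), and it correctly identifies that Theorem \ref{furstenberg-thm} is an input. However, the heart of the argument is missing, and the step where you claim to close the estimate would not go through as described.

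The paper's proof of Proposition \ref{two-ends-furstenberg-prop} is a \emph{backward induction on $\delta$}: after the reductions, one applies the proposition itself at the coarser scales $r$ (inside each $r$-ball, Step 1), $\delta/\alpha$ (narrow case of Step 2), and $\Delta$ (Case~C of Step 5), and these self-applications are what carry the factor $\gamma_{Y,t^*}^{-1/2}$ through the scales. Your proposal makes no mention of this induction, and without it the $\gamma$-dependence has no mechanism to close. Related to this, you attribute the $\gamma_{Y,t^*}^{-1/2}$ factor to a pair-intersection bound $|Y(\ell_1)\cap Y(\ell_2)|\le\gamma_{Y,t^*}\delta^2\rho^{-t^*}$ fed into a ``bilinear $L^2$ step.'' The paper never runs such a bilinear argument; $\gamma$ is instead tracked through dilations and coarsenings via Lemma \ref{gamma-y-lemma} and item~(a) of Lemma \ref{multi-scale-lem}, and the two regimes of $t^*$ are handled automatically by those dilation identities rather than by a pair count. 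If you try to run the pair-count argument directly, you face the problem that the bilinear $L^2$ numerology gives the right exponent at $t=1$ but not the sharp $\delta^{(t-1)/2}\gamma_{Y,t^*}^{-1/2}$ for general $t$; this is precisely the difficulty the paper circumvents with the extra machinery.

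The other essential ingredient you omit is the dichotomy from Demeter--Wang, Theorem \ref{DW-24}, packaged here as Proposition \ref{prop-induction}. The Ren--Wang estimate (via Lemma \ref{furstenberg-katz-tao-lem}) is used only in the degenerate regime $r\geq\delta^\eta$; in the main regime the proof sets up a scale-$\delta/r$ incidence problem inside each $r$-ball (Steps 3--4) and invokes Proposition \ref{prop-induction}, which returns one of three outcomes. Outcome~A bounds the multiplicity, Outcome~B gives a direct volume bound and triggers a rescaled application of Proposition \ref{two-ends-furstenberg-prop} at scale $r$, and Outcome~C (concentration at an intermediate scale $\Delta$) triggers another rescaled application at scale $\Delta$, together with Lemma \ref{furstenberg-upper-range} to control the $\Delta$-ball structure. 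Your plan of directly feeding ``the coarsened configuration $(L,\widetilde{Y})_r$ into the Ren--Wang Furstenberg estimate'' and then multiplying by the within-tube $(\delta/r,s)$-set count does not replace this: the Ren--Wang hypotheses require a $(\delta,s)$-set structure on the shading at the scale of the estimate, which the coarse shading $(Y(\ell))_r$ does not carry at scale $r$, and simply multiplying the two scales' cardinalities does not account for possible double counting, which is exactly what the DW-24 dichotomy (and the three-case analysis it produces) is designed to control.
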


\smallskip

For convenience, we introduce the following definition.

\begin{definition}
\label{L[T]-def}
Let $v\in(0,1]$.
For a tube $T\in\ZR^n$ with cross-section radius $v$ and length $1$, let $\cl[T]$ be the family of lines intersecting $T$ that make an angle $\leq v$ with the coreline of $T$.
Given a set of lines $L$, define
\begin{equation}
    L[T]:=L\cap \cl[T].
\end{equation}
\end{definition}

\begin{remark}
\label{sharp-example}

\rm

The necessity of $\ga_{Y,t^\ast}^{-1/2}$ in \eqref{eq: Thm2.1} can be seen by the following examples:

Suppose $r\in(\delta, 1)$.
Let $(L,Y)_r$ be a set of lines and shadings appearing in \cite[Section 1]{Wolff-survey} so that 
\begin{enumerate}
    \item $L$ is a Katz--Tao $(r,t)$-set of lines with $\# L\sim r^{-t}$.
    \item $Y(\ell)$ is a $(r,s)$-set of $r$-balls and $|Y(\ell)|\sim r^{2-s}\sim\la r$.
    \item $|E_L|\approx \la^{1/2}r^{(t-1)/2}\sum_{\ell\in L}|Y(\ell)|\sim\la^{3/2}r^{(1-t)/2}$.
\end{enumerate}

\smallskip

{\bf Case 1: $t\in[0,1]$.}
Assume all lines in $L$ are transversal to the horizontal axis.
Let $(L',Y')_\de$ be the anisotropic $\de/r$-rescaling of $(L,Y)_r$ along the horizontal direction.
Then $L'$ is a Katz--Tao $(\de,t)$-set of lines, and for each $\ell\in L'$, $Y'(\ell)$ is $\la$-dense and is a union of $\de\times r$-segments.
Clearly $Y'$ is two-ends, and $\sum_{\ell\in L'}|Y'(\ell)| \sim\la r^{-t}\de$.
Moreover, we have $\ga_{Y',t^\ast}\sim (r/\de)^{1-t}$.

Now we can calculate
\begin{equation}
    |E_{L'}|\sim (\de/r)|E_L|\sim \la^{1/2}\de^{(t-1)/2}\ga_{Y',t^\ast}^{-1/2}\sum_{\ell\in L'}|Y'(\ell)|,
\end{equation}
which shows the necessity of $\ga_{Y,t^\ast}^{-1/2}$ in \eqref{eq: Thm2.1} when $t\in[0,1]$.

\smallskip

{\bf Case 2: $t\in[1,2]$.}
For each $\ell\in L$, let $L'[\ell]$ be a Katz--Tao $(\de,t)$-set of lines contained in $N_r(\ell)$ with $\# L'[\ell]\sim(r/\de)^{t}$ and $N_r(\ell)\subset \bigcup_{\ell'\in L'[\ell]}N_{\de}(\ell')$.
Let $L'=\bigcup_{\ell\in L}L'[\ell]$, so $L'$ is a Katz--Tao $(\de,t)$-set with $\#L'\sim\de^{-t}$.
For each $\ell'\in L'[\ell]$, let $Y'(\ell')=Y(\ell)\cap N_\de(\ell')$ be a set of $\de\times r$-segments.
Then $Y'$ is $\la$-dense, and 
$\ga_{Y',t^\ast}\sim (r/\de)^{t-1}$.

Now we can calculate
\begin{equation}
    |E_{L'}|\sim |E_L|\sim \la^{1/2}r^{(t-1)/2}\sum_{\ell\in L}|Y(\ell)|\sim \la^{1/2}\de^{(t-1)/2}\ga_{Y',t^\ast}^{-1/2}\sum_{\ell\in L'}|Y'(\ell)|,
\end{equation}
which shows the necessity of $\ga_{Y,t^\ast}^{-1/2}$ in \eqref{eq: Thm2.1} when $t\in[1,2]$.

\end{remark}

\medskip

\subsection{Outline of the proof}

\label{outline-subsection}

For simplicity, we assume $|Y(\ell)|\sim\la\de$ for all $\ell\in L$, and we only aim to heuristically justify the following simplified estimate:
\begin{equation}
\label{large-volume-heurisitcs}
    |E_L|\gtrsim \la^{1/2}\de^{(t-1)/2}\ga_{Y,t^\ast}^{-1/2}\sum_{\ell\in L}|Y(\ell)|
\end{equation}
That is, $E_L$ has {\bf large volume}.
Note that, modulo a few steps of pigeonholing, it is equivalent to the following {\bf small multiplicity} claim: for all $x\in E_L$,
\begin{equation}
\label{small-multiplicity-heurisitcs}
    \# L(x)\lesssim \la^{-1/2}\de^{(1-t)/2}\ga_{Y,t^\ast}^{1/2}.
\end{equation}
We will show that either \eqref{large-volume-heurisitcs} or \eqref{small-multiplicity-heurisitcs} holds, which implies Theorem \ref{two-ends-furstenberg-general}.

\smallskip

We assume that  $Y(\ell)$ admits the same branching function for all $\ell\in L$.
By Lemma \ref{multi-scale-lem}, there is a parameter $s$ and a scale $r$  such that the $r^{-1}$-dilate of the shading $J\cap Y(\ell)$ is a $(\de/r,s)$-set  and $|J\cap Y(\ell)|_\delta \approx (\delta/r)^{-s}$ for all $\de\times r$-tube $J\subset(Y(\ell))_r$ (see Definition~\ref{tube-segment}).

Consider the family $\cT=\{J\subset(Y(\ell))_r:\ell\in L\}$ of $\de\times r$-tubes generated by $(L,Y)_\de$.
We also introduce the new shading $\bar Y(\ell):=(Y(\ell))_r$, which consists of a union of  $\de\times r$-tubes and is $\la(r/\de)^{1-s}$-dense.
Notice that, by Lemma \ref{multi-scale-lem}, we have $\ga_{\bar Y,t^\ast}\lesssim \ga_{Y,t^\ast}$.
Moreover, for each $\de/r\times 1$-tube $\bar T$, the anisotropic $r/\de$-dilate of the configuration $(L[\bar T], \bar Y)_\de$ along the normal direction of  $\bar T$ yields a Katz--Tao $(r,t)$-set of lines in $\ZR^2$ with two-ends, $\la(r/\de)^{1-s}$-dense shadings.
Thus by induction, we can apply \eqref{small-multiplicity-heurisitcs} at scale $r$  to conclude that for all $\de\times r$-tubes $J\in\cT$
\begin{align}
\label{upper-bound-heuristic}
    \#\{\ell\in L:J\subset (Y(\ell))_r\}&\sim\#\{\ell\in L[\bar T]:J\subset (Y(\ell))_r\}\\
    &\lesssim (\la(r/\de)^{1-s})^{-1/2}r^{(1-t)/2}\ga_{\bar Y,t^\ast}^{1/2}.
\end{align}
Here $\bar T$ is the $(\de/r)\times 1$-tube in the same direction as $J$.

\smallskip 

Next, we are going to analyze what happens inside an $r$-ball $B$.
Let $\cT_B$ be the set of $\de\times r$-tubes in $\cT$ that intersect the $r$-ball $B$.
For these $\de\times r$-tubes we define a shading $Y_B$ as follows. 
For each $J\in\cT_B$, we choose a line $\ell\in L$ such that $J\subset (Y(\ell))_r$, and then define  $Y_B(J)=J\cap Y(\ell)$.
As a result, the $r^{-1}$-dilate of the shading $Y_B(J)$ is a $(\de/r,s)$-set.

Suppose $\cT_B$ is uniform, in the sense that its $r^{-1}$–dilate is a uniform set of $\delta/r \times 1$–tubes.
By the standard broad-narrow reduction, we may assume that all points in $ E_{\cT_B, Y_B}$ are broad.
That is, applying Lemma \ref{broad-narrow-lem} to the input $(\cT_B, Y_B)$ yields $\rho\gtrapprox1$.
In particular, this implies that, after point-line duality, the $r^{-1}$-dilate of $(T_B, Y_B)$ obeys the assumption of Theorem \ref{DW-24}.
Thus, we obtain three possible outcomes (see Proposition \ref{prop-induction}):
\begin{enumerate}
    \item {\bf (Small multiplicity):} 
    For each typical $x\in E_{\cT_B, Y_B}$, we have 
    \begin{equation}
    \label{small-multiplicity-heurisitcs2}
        \#\cT_B(x)\lessapprox  (\de/r)^{(s-t)/2}.
    \end{equation}
    \item {\bf (Large volume):}
    For each typical $r$-ball $B$, we have
    \begin{equation}
    \label{large-volume-heurisitcs2}
        |E_L\cap B|\gtrapprox (\delta/r)^{2-(t+3s)/2}|B|.
    \end{equation}
    \item {\bf (Thickened tubes):}
    There exists a scale $\Delta$ much larger than $\delta$ such that for each typical $J\in\cT_B$, we have
    \begin{equation}
    \label{thickened-tube-heurisitcs}
        |E_L\cap N_\Delta(J)|\gtrapprox (\delta/r)^{1-s}|J|.
    \end{equation}
\end{enumerate}

\smallskip

Finally, we show that the main theorem follows in either case.

\smallskip

{\bf Case I.} Suppose \eqref{small-multiplicity-heurisitcs2} is true.
By \eqref{upper-bound-heuristic}, we have
\begin{equation}
    \# L(x)\lesssim  (\la(r/\de)^{1-s})^{-1/2}r^{(1-t)/2}\ga_{\bar Y,t^\ast}^{1/2}\cdot (\de/r)^{(s-t)/2}\lesssim \la^{-1/2}\de^{(1-t)/2}\ga_{\bar Y,t^\ast}^{1/2}.
\end{equation}
This yields \eqref{small-multiplicity-heurisitcs} as $\ga_{\bar Y,t^\ast}\lesssim \ga_{Y,t^\ast}$.

{\bf Case II.} Suppose \eqref{large-volume-heurisitcs2} is true.
Consider a new shading $\tilde Y(\ell)=N_r(Y(\ell))$ for a typical $\ell\in L$.
By assumption, it is $\tilde \lambda$-dense and is two-ends as well, where 
\begin{equation}
\label{lambda-tilde-outline}
    \tilde\lambda:=\la(r/\de)^{1-s}.
\end{equation}

Now we refine $L$ to a Katz--Tao $(r, t)$-set. 
More precisely, apply Lemma \ref{Katz--Tao-set-lem} to obtain a $\gtrapprox(\de/r)^t$-refinement $\tilde L$ of $L$ such that $\tilde L$ is a Katz--Tao $(r,t)$-set.
By induction, apply \eqref{large-volume-heurisitcs} at scale $r$ so that 
\begin{align}
\label{volume-Delta-ball}
    |E_{\tilde L, \tilde Y}| &\gtrsim \tilde\lambda^{1/2}r^{(t-1)/2}\ga_{\tilde Y,t^\ast}^{-1/2}\sum_{\ell\in \tilde L}|\tilde Y(\ell)|\\
    &\gtrsim  (\tilde\lambda/\lambda)^{3/2} \lambda^{1/2}r^{(t-1)/2}\ga_{\tilde Y,t^\ast}^{-1/2}(\delta/r)^{t-1} \sum_{\ell\in  L}| Y(\ell)|.
\end{align}
Finally, combining \eqref{large-volume-heurisitcs2}, \eqref{volume-Delta-ball}, and \eqref{lambda-tilde-outline}, and noting $\ga_{\tilde Y,t^\ast}\lesssim \ga_{Y,t^\ast}$, we have
\begin{align}
    |E_L|  &\sim \min_{B\in\cb_{r,6}}\frac{|E_L\cap B|}{|B|}  \cdot |E_{\tilde L, \tilde Y}| \\
    &\gtrapprox  (\delta/r)^{2-(t+3s)/2} \cdot (\tilde\lambda/\lambda)^{3/2} \la^{1/2}r^{(t-1)/2}\ga_{\tilde Y,t^\ast}^{-1/2}(\delta/r)^{t-1}\sum_{\ell\in  L}| Y(\ell)| \\
    &\gtrsim (\delta/r)^{2-(t+3s)/2}(\tilde\lambda/\lambda)^{3/2} (\de/r)^{(t-1)/2}\cdot \la^{1/2}\de^{(t-1)/2}\ga_{Y,t^\ast}^{-1/2}\sum_{\ell\in L}|Y(\ell)|\\
    &\gtrsim \la^{1/2}\de^{(t-1)/2}\ga_{Y,t^\ast}^{-1/2}\sum_{\ell\in L}|Y(\ell)|. 
\end{align}

{\bf Case III.} Suppose \eqref{thickened-tube-heurisitcs} is true.
Consider the thickened shading $\tilde Y(\ell)=E_L\cap N_{\Delta}(\ell)$.
Then \eqref{thickened-tube-heurisitcs} implies that $\tilde Y(\ell)$ is still $\gtrapprox\lambda$-dense for each typical $\ell\in L$.
That is, passing to the thickened configuration $(L,\tilde Y)_\Delta$ incurs essentially no loss.
Thus, we may apply \eqref{large-volume-heurisitcs} at scale $\Delta$ to conclude the theorem.
We remark that some additional technical work is required to control the Katz--Tao constant $\ga_{\tilde Y,t^\ast}$ for this thickened shading; we defer these details to the formal proof.

\medskip

\subsection{Some incidence results and two-ends reduction}

The proof of Theorem \ref{two-ends-furstenberg-general} relies on several incidence estimates.

\begin{lemma}
\label{furstenberg-upper-range-pre}
Let $0<t<u\leq 1$ and let $\eta\in(0, (u-t)/2 )$.
Let $\cb$ be a collection of $\de$-balls in $\ZR^2$, and for each $B\in\cb$, let $\cT(B)$ be a family of $\de\times1$-tubes intersecting $B$.
Suppose $\cup_\cb$ is a $(\de,2-t,\de^{-\eta})$-set.
Suppose also that for each $B\in\cb$, $\cT(B)$, identified with a set of $delta$-arcs, is a $(\de,u,\de^{-\eta})$-set.
Let $r$ be such that $\#\cT(B)\geq r$ for all $B\in\cb$.
Then for any $\e>0$, 
\begin{equation}
    \#\bigcup_{B\in\cb}\cT(B)\geq c_\e\de^\e \de^{-1}r,
\end{equation}
where $ \#\bigcup_{B\in\cb}\cT(B)$ should be understood as the maximum number of distinct tubes contained in $\bigcup_{B\in\cb}\cT(B)$  (two $\delta\times 1$-tubes $T_1$ and $T_2$ are distinct if $|T_1\cap T_2|\leq |T_1|/2$). 
\end{lemma}

Lemma \ref{furstenberg-upper-range-pre} was stated in \cite[Theorem 4.1]{ren2023furstenberg} with an additional dependence between $\eta$ and $\e$.
However, this dependence can be removed since we are only considering the upper range in the Furstenberg set estimate. 
We refer to \cite[Proposition 4.2]{ren2023furstenberg} for its proof.

\begin{lemma}
\label{furstenberg-upper-range}
Let $0<t<u\leq 1$ and let $\eta\in(0,(u-t)/4)$. 
Let $\De\in[\de^{1-\eta},1]$, and let $\cb$ be a set of $\de$-balls in $\ZR^2$ contained in some   $\De$-ball $B_\De$.
For each $B\in\cb$, let $\cT(B)$ be a set of $\de\times1$-tubes intersecting $B$.
Suppose that the $\De^{-1}$-dilate of $\cb$ is a $((\de/\De),2-t,(\de/\De)^{-\eta})$-set,
and for each $B\in\cb$, $\cT(B)$ is a $(\de,u,(\de/\De)^{-\eta})$-set.
Let $r\geq1$ be such that $\#\cT(B)\geq r$ for all $B\in\cb$.
Then for any $\e>0$, 
\begin{equation}
    \#\bigcup_{B\in\cb}\cT(B)\geq c_\e\de^\e (\De/\de)r.
\end{equation}
\end{lemma}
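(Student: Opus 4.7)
The plan is to reduce to a rescaled incidence problem inside the unit ball, run a Cauchy--Schwarz estimate exploiting both the $(\de',2-t)$-set structure of the balls and the $(\de',u)$-set structure of the tube angles, and then close the resulting exponent gap by a multi-scale iteration.

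First I would rescale $B_\De$ to the unit ball by a $\De^{-1}$-dilation and set $\de':=\de/\De\in[\de^{\eta},1]$. Under this dilation $\cb$ becomes $\cb'$, a $(\de',2-t,(\de')^{-\eta})$-set of $\de'$-balls in $B^2(0,1)$, and each $\cT(B)$ becomes $\cT'(B)$, a $(\de',u,(\de')^{-\eta})$-set of angular parameters for $\de'\times 1$-tubes through $B$, still with $\#\cT'(B)\geq r$. The target becomes $\#\bigcup_B\cT'(B)\gtrapprox(\de')^{-1}r$. After pigeonholing so that $\#\cT'(B)\sim r$ uniformly (absorbing logs into $\de^{\e}$), I would study the tube multiplicity $m(T):=\#\{B\in\cb':T\in\cT'(B)\}$, so that $\sum_T m(T)=\#\cb'\cdot r$.

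The core step is the pair-counting bound for $\sum_T m(T)^2=\sum_{B_1,B_2}\#(\cT'(B_1)\cap\cT'(B_2))$. For $B_1\neq B_2$ at distance $\rho\in[\de',1]$, the common tubes have angular directions within a window of size $\lesssim\de'/\rho$. Combining the $(\de',u,(\de')^{-\eta})$-set condition on $\cT'(B_1)$ with the $(\de',2-t,(\de')^{-\eta})$-set condition on $\cb'$, and using $t+u<2$ (from $t<u\leq 1$) so that the dyadic sum in $\rho$ is dominated at $\rho=1$, I would obtain
\[
\sum_{B_1\neq B_2}\#(\cT'(B_1)\cap\cT'(B_2))\lesssim(\de')^{u-2\eta}(\#\cb')^2 r.
\]
I would also observe that the $(\de',2-t,(\de')^{-\eta})$-set condition forces $\#\cb'\geq(\de')^{-(2-t-\eta)}\geq(\de')^{-1}$, using $\eta<(u-t)/2\leq(1-t)/2<1-t$. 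Cauchy--Schwarz then yields the preliminary lower bound $\#\bigcup_B\cT'(B)\gtrsim r(\de')^{2\eta-u}$ in the regime where the pair-incidences dominate the diagonal term.

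The hard part will be closing the exponent gap from $(\de')^{-u}$ to $(\de')^{-1}$, a deficit of $(\de')^{1-u}$. I expect to handle this by a multi-scale refinement: apply Lemma \ref{uniformization} and Lemma \ref{uniform-sets-branching-lem} to extract a common uniform branching function for $\{\cT'(B):B\in\cb'\}$, and then invoke Lemma \ref{multi-scale-lem} to partition $[\de',1]$ into $O(\eta^{-1})$ intermediate scales on each of which $\cT'(B)$ has a definite effective angular exponent. Rerunning the pair-counting Cauchy--Schwarz at each refined scale should bootstrap the effective exponent iteratively toward $1$. The buffer $\eta<(u-t)/2$ is what keeps the $(\de')^{-\eta}$ losses accumulated across $O(\eta^{-1})$ iterations subpolynomial and absorbable into $c_\e\de^{\e}$, while the strict gap $u>t$ is what drives each bootstrap step to yield a genuine improvement. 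The main obstacle is orchestrating this iteration cleanly: a single-scale Cauchy--Schwarz provably cannot reach $(\de')^{-1}$, so the multi-scale structure is exactly the ingredient that must make up the deficit.
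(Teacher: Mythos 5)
Your first half is set up correctly, but two points need care and the decisive step is missing. First, $\de/\De\in[\de,\de^{\eta}]$, not $[\de^{\eta},1]$. Second, after rescaling you cannot literally restate the goal as a count of $(\de/\De)\times 1$-tubes: there are only about $(\De/\de)^{2}$ distinct such tubes meeting the unit ball, while the target $(\De/\de)r$ exceeds this whenever $r>\De/\de$ (which is allowed, since $r$ can be as large as $\#\cT(B)\gtrsim\de^{-u}(\de/\De)^{\eta}$). So the quantity to be bounded below must remain the number of original $\de$-tubes, whose directional separation below scale $\de/\De$ is invisible in the rescaled picture; this already signals that the lemma is a genuinely two-scale statement. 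With that reading (and keeping the full sets $\cT(B)$ rather than pigeonholing to size $\sim r$, which can ruin the relative $(\de,u)$-set constant), your pair-counting bound and the resulting preliminary estimate $\gtrsim r(\de/\De)^{2\eta-u}$ are correct, as is $\#\cb\geq(\De/\de)$.

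The gap is the bootstrap. Pair counting exploits only the $u$-dimensional non-concentration of the direction sets through a worst-case bound on common tubes, and such gains telescope: over any partition of $[\de/\De,1]$ into intermediate scales, the per-scale Cauchy--Schwarz gains multiply to at most $(\De/\de)^{u}$, never $(\De/\de)$, when $u<1$; moreover the extra factor $\De/\de$ cannot come from the internal branching of a single $\cT(B)$ at all (that only redistributes the same $r$ tubes), so no amount of re-running the same second-moment estimate on refined scales of the direction sets supplies it. Tellingly, your argument never genuinely uses the two hypotheses that power the lemma: $u>t$ enters your sketch only to ensure $2-t-u>0$, which already follows from $t<1$, $u\leq1$, and the $(2-t)$-dimensionality of $\cb$ enters only through $\#\cb\geq(\De/\de)$. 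The factor $\De/\de$ is exactly the ``$1+s$'' branch of the Furstenberg exponent (in the dual configuration of Theorem \ref{furstenberg-thm} with line-set dimension $2-t$ and shading dimension $u$, the minimum $\min\{2-t+u,\frac{3u+2-t}{2},1+u\}$ equals $1+u$ precisely when $u\geq t$), i.e.\ a hairbrush/Furstenberg-type mechanism in which the more-than-one-dimensionality of $\cb$ forces the bushes of balls lying on a common coarse tube to spread into many new tubes; that mechanism, together with an accounting that wins back the $(\de/\De)^{-O(\eta)}$ losses from the margin $u-t>2\eta$ (your conclusion may lose only $\de^{\e}$ for arbitrary $\e$, while $\eta$ is fixed), is what your multi-scale iteration would have to deliver, and the sketch gives no argument for it. The paper itself does not reproduce a proof (it cites \cite{Wang-Wu}), but as written your proposal does not close the central step.
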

\begin{proof}
For each $B\in\cb$, let $\cT_{\de/\De}(B)$ be the family of distinct $\de/\De\times1$-tubes containing at least one $\de\times1$-tube in $\cT(B)$.
For each $T_{\de/\De}\in\cT_{\de/\De}(B)$, define $\cT(T_{\de/\De}):=\{T\in\cT(B):T\subset T_{\de/\De}\}$.
By dyadic pigeonholing on $\{\#\cT(T_{\de/\De}):T_{\de/\De}\in\cT_{\de/\De}(B)\}$, there exists a set $\cT_{\de/\De}'(B)\subset\cT_{\de/\De}(B)$ and a number $\mu_{B}$ such that $\#\cT(T_{\de/\De})\sim\mu_B$ for all $T_{\de/\De}\in\cT_{\de/\De}'(B)$ and $\#\cT_{{\de/\De}}'(B)\cdot\mu_B\gtrapprox\#\cT(B)\geq r$.
Since $\cT(B)$ is a $(\de,u, (\de/\De)^{-\eta})$-set, $\cT_{\de/\De}(B)$ is a $(\de/\De, u, (\de/\De)^{-\eta})$-set, yielding that $\cT_{\de/\De}'(B)$ is a $(\de/\De,u,(\de/\De)^{-2\eta})$-set.

For each $\de/\De\times1$-tube $T_{\de/\De}\in\cT_{\de/\De}'(B)$, there is a unique $\de\times \De$-tube $\bar T$ intersecting $B$ that is parallel to $T_{\de/\De}$.
Let $\bar\cT'(B)$ be this set of $\de\times\De$-tubes,
so $\#\bar\cT'(B)=\#\cT_{{\de/\De}}'(B)\gtrapprox r/\mu_B$.
Note that $\bar\cT'(B)$ is indeed the $\De$-dilate of $\cT_{\de/\De}'(B)$. 
In particular, the $\De^{-1}$-dilate of $\bar\cT'(B)$ is a $(\de/\De,u,(\de/\De)^{-2\eta})$-set.
By dyadic pigeonholing on $\{\mu_B:B\in\cb\}$, there exists a refinement $\cb'$ of $\cb$ and a uniform $\mu$ such that $\mu_B\sim\mu$ for all $B\in\cb'$.
Hence, the $\De^{-1}$-dilate of $\cb'$ is a $((\de/\De),2-t,(\de/\De)^{-2\eta})$-set.
Moreover, $\#\cT(T_{\de/\De})\sim\mu$ for all $T_{\de/\De}\in\cT_{\de/\De}'(B)$ and all $B\in\cb'$. 
This shows that there are $\gtrsim \mu$ many $\de\times 1$-tubes in $\cT(B)$ intersecting $\bar T$ for all $\bar T\in\bar\cT'(B)$ and all $B\in\cb'$.
Therefore, $\#\bigcup_{B\in\cb}\cT(B)\gtrsim\mu \#\bigcup_{B\in\cb'}\bar\cT'(B)$.

Apply a $\De$-dilate version of Lemma \ref{furstenberg-upper-range-pre} to the $\de\times\De$-tubes $\{\bar \cT'_B,B\in\cb'\}$ so that
\begin{equation}
    \#\bigcup_{B\in\cb}\cT(B)\gtrsim\mu\cdot  \#\bigcup_{B\in\cb'}\bar\cT'(B)\gtrapprox \mu(\De/\de) (r/\mu)\gtrapprox (\De/\de)r. \qedhere
\end{equation}

\end{proof}

\smallskip

To prove the next result, we need the following Furstenberg set estimate proved in \cite{Orponen-Shmerkin-2} and \cite{ren2023furstenberg}.

\begin{theorem}[\cite{ren2023furstenberg}, Theorem 4.1, Remark 6.8]
\label{furstenberg-thm}
For all $\nu>0$, there exists an $\eta>0$ such that the following is true for all $s\in[0,1]$ and $t\in[0,2]$.

Let $(L,Y)_\de$ be a set of lines and shadings such that $L$ is a $(\de,t,\de^{-\eta})$-set, and $Y(\ell)$ is a $(\de,s,\de^{-\eta})$-set for all $\ell\in L$.
Then
\begin{equation}
    |E_L|\gtrsim \de^{\nu}\de^{2}\de^{-\min\{t+s,\frac{3s+t}{2},1+s\}}.
\end{equation}
\end{theorem}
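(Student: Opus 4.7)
The plan is to follow the multi-scale discretized projection strategy developed in \cite{Orponen-Shmerkin-2} and completed in \cite{ren2023furstenberg}, establishing each of the three competing lower bounds $t+s$, $(3s+t)/2$, and $1+s$ separately and then combining them. The bound $t+s$ is the easiest: using the rich-point decomposition Lemma \ref{rich-point-refinement} to extract a typical incidence multiplicity $\mu$, the $(\de,s)$-set structure of each shading forces $\mu\lesssim\de^{-s}$ up to a $\de^{-\eta}$ loss, so that $|E_L|\gtrapprox \mu^{-1}\sum_{\ell\in L}|Y(\ell)|$ recovers the target after inserting $\#L\gtrapprox\de^{-t}$ and $|Y(\ell)|\gtrapprox\de^{2-s}$. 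The bound $1+s$ is immediate from retaining only one shading $Y(\ell)$.

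For the deep bound $(3s+t)/2$, the first step is to pass to a uniform setting: apply Lemma \ref{uniformization} both to the dual point set of $L$ (via Definition \ref{point-line-def}) and to each shading $Y(\ell)$, then apply Lemma \ref{uniform-sets-branching-lem} to extract a refinement on which all shadings share a common branching function $\be_Y$, alongside a branching function $\be_L$ for $L$. Next, feed both $\be_L$ and $\be_Y$ into the multi-scale decomposition Lemma \ref{multiscale-prop} to obtain a common partition $0=A_1<\cdots<A_{H+1}=1$ together with effective dimensions $(t_h,s_h)$ on each interval $[A_h,A_{h+1}]$, at which the configuration behaves, up to error $\eta$, like a scale-invariant $(t_h,s_h)$-Furstenberg problem. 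At each intermediate scale $\rho=\de^{A_h}$, after $\rho^{-1}$-rescaling, one faces an intermediate Furstenberg problem at scale $\de/\rho$; the core step is a single-scale discretized radial projection / sum-product estimate (variants of the Orponen--Shmerkin projection theorem or Bourgain's ABC / discretized sum-product inequality) that yields the sharp gain $(3s_h+t_h)/2$ at that scale. Multiplying the single-scale estimates across $h=1,\ldots,H$ and telescoping the exponents yields the global bound $\de^{-(3s+t)/2}$, with the small parameter $\eta$ in Lemma \ref{multiscale-prop} chosen sufficiently small (in terms of the target $\e$) to absorb accumulated errors into the $\de^\e$ slack.

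The main obstacle is the single-scale step: proving the sharp discretized projection estimate with quantitative control of the non-concentration constants. One route, used in \cite{ren2023furstenberg}, is to prove a weaker single-scale bound first and then bootstrap via multi-scale analysis, feeding the output back into the input until the sharp $(3s+t)/2$ exponent is reached. A secondary obstacle is ensuring that the $(\de,t,\de^{-\eta})$- and $(\de,s,\de^{-\eta})$-set hypotheses on $L$ and on each $Y(\ell)$ are preserved under the repeated refinements, rescalings, and passages between scales $\de$ and $\de/\rho$; this is handled by iterated applications of Lemmas \ref{refinement-delta-s-set} and \ref{refinement-delta-s-Delta-set}, together with the angular broad/narrow decomposition Lemma \ref{broad-narrow-lem} used to extract a transverse sub-family of lines on which the rescaled problem inherits a genuine $(\de,t_h)$-set structure.
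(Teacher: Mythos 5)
The paper does not prove Theorem \ref{furstenberg-thm} at all: it is imported verbatim as a black box from \cite{ren2023furstenberg} (Theorem 4.1 together with Remark 6.8), so there is no in-paper argument to compare your proposal against. What you have written is an outline of the known Orponen--Shmerkin/Ren--Wang program rather than a proof, and as a proof it has genuine gaps. The decisive one is that the entire content of the theorem is concentrated in the two steps you defer: the sharp single-scale input (the discretized radial projection/sum--product estimate) is only named, not established, and the assertion that ``multiplying the single-scale estimates across $h$ and telescoping the exponents yields $\de^{-(3s+t)/2}$'' does not work as stated. The exponent $\min\{t+s,\frac{3s+t}{2},1+s\}$ is not obtained by naively composing a uniform per-scale gain: in \cite{ren2023furstenberg} the multiscale decomposition is used to split into structurally different regimes (according to the shape of the branching functions), the regime $t>1$ is treated by a dual incidence argument rather than by radial projections, and the bootstrapping of a weak single-scale bound into the sharp one is itself a delicate induction. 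Writing ``telescoping yields the global bound'' hides exactly the part of the argument that took \cite{Orponen-Shmerkin-2} and \cite{ren2023furstenberg} to resolve, so the core of the proof is missing rather than sketched.

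A secondary but concrete error is your derivation of the ``easy'' exponent $t+s$. The non-concentration of each shading $Y(\ell)$ as a $(\de,s)$-set says nothing about the multiplicity $\#L(x)$ at a point $x$: the lines through $x$ are constrained by the structure of $L$ (in the dual, by how many points of the dual set lie in a $\de$-tube), not by the one-dimensional distribution of the individual shadings, so the claimed bound $\mu\lesssim\de^{-s}$ does not follow from Lemma \ref{rich-point-refinement} plus the $(\de,s)$-set hypothesis. The standard route to $|E_L|\gtrapprox\de^{2-t-s}$ is an $L^2$/Cauchy--Schwarz incidence bound using $|Y(\ell)\cap Y(\ell')|\lesssim \de^2\cdot\min\{\de/\angle(\ell,\ell'),\de^{-1}\}$-type estimates together with the $(\de,t)$-set spacing of $L$, and this step too needs to be carried out rather than asserted. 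In short, since the target here is a deep external theorem used by the paper as a citation, the honest options are either to cite it (as the paper does) or to reproduce the full Ren--Wang argument; the present outline does neither.
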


\begin{lemma}
\label{furstenberg-Katz--Tao-lem}
Let $(\nu,\eta)$ be as in Theorem \ref{furstenberg-thm}.
Let $\la\in(0,1]$.
Let $(L,Y)_\de$ be a set of lines and $\la$-dense shadings such that $L$ is a Katz--Tao $(\de,t)$-set.
Suppose for all $\ell\in L$, $Y(\ell)$ is a $(\de,s,\de^{-\eta})$-set, and $|Y(\ell)|\lesssim \de^{1-s-\eta}|N_\de(\ell)|$.
Then
\begin{equation}
\label{furstenberg-Katz--Tao-esti}
    |E_L|\gtrsim \de^{\nu+O(\eta)}\la^{1/2}\de^{(t-1)/2}\ga_{Y,t^\ast}^{-1/2}\sum_{\ell\in L}|Y(\ell)|.
\end{equation}
\end{lemma}
\begin{proof}
By a standard probabilistic argument (see \cite[Section 3]{Demeter-Wang}), it suffices to prove \eqref{furstenberg-Katz--Tao-esti} with the additional assumption that $L$ is a $(\de,t)$-set.

In this case, apply Theorem \ref{furstenberg-thm} to have
\begin{align}
    |E_L|&\gtrsim\de^\nu\de^{2}\de^{-\min\{t+s,\frac{3s+t}{2},1+s\}}\gtrsim\de^{\nu+O(\eta)}\de^{\max\{0,\frac{t-s}{2},t-1\}}\sum_{\ell\in L}|Y(\ell)|\\
    &\gtrsim \de^{\nu+O(\eta)}\de^{\max\{\frac{s-t}{2},\,0,\frac{t+s}{2}-1\}}\la^{1/2}\de^{(t-1)/2}\sum_{\ell\in L}|Y(\ell)|.
\end{align}
To prove \eqref{furstenberg-Katz--Tao-esti}, it suffices to show $\ga_{Y,t^\ast}\gtrsim \de^{O(\eta)} \de^{\min\{t-s,\,0,\,2-t-s\}}$.
But this is obviously true, as $|Y(\ell)|\gtrsim \de^{1-s+\eta}|N_\de(\ell)|$. \qedhere

\end{proof}

\smallskip

The next theorem was proved in \cite{Demeter-Wang}. We state a more quantitative version of it here.

\begin{theorem}[\cite{Demeter-Wang}, Theorem 5.4]
\label{DW-24}
For any $\nu>0$, there exists an $\eta>0$, which is much smaller than $\nu^{2\nu^{-1}}$, such that the following is true for sufficiently small $\de$: 

Let $\cT$ be a family of $\de\times 1$-tubes in the plane, and let $\cp$ be a family of uniform $\de$-balls such that for any $\rho\in[\de,1]$,
\begin{equation}
    |\cup_\cp|_\rho\gtrsim\rho^{-s}\de^{\eta}.
\end{equation}
For each $\de$-ball $p\in\cp$, let $\cT_p\subset\cT$ be a $(\de,s,\de^{-\eta})$-set (here we identify each $T\in\cT_p$ as a $\de$-arc contained in $\ZS^1$) of $\de\times1$-tubes passing through $p$ with $\#\cT_p\sim r$. 
Moreover, as a union of $\de$-arcs of $\ZS^1$, $\cup_{\cT_p}$ is uniform, and there is a uniform branching function of the family of the union of $\de$-arcs $\{\cup_{\cT_p}: p\in\cp\}$.

Then one of the following must be true:
\begin{enumerate}
    \item We have
    \begin{equation}
        \#\cp \lesssim\de^{s-\nu}\frac{(\#\cT)^2}{r^2}.
    \end{equation}
    \item There exists a scale $\De\gtrsim \de^{1-\sqrt{\eta}}$ such that for each $\De$-ball $B\subset (\cup_\cp)_\De$, the $\De^{-1}$-dilate of $\cup_\cp\cap B$ is a uniform, $(\de/\De, 2-s+\eta^{1/4}, \de^{-\eta})$-set.
\end{enumerate}

\end{theorem}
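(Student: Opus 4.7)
The plan is to adapt the framework used for the $t=1$ case in \cite{Wang-Wu}, combining uniformization, a two-ends/multi-scale reduction on each shading, a broad-narrow decomposition at each point, and the Demeter-Wang dichotomy (Theorem \ref{DW-24}), paired with the Furstenberg set estimate (Theorem \ref{furstenberg-thm} and Lemma \ref{furstenberg-katz-tao-lem}). The two novelties relative to $t=1$ are the additional $\de^{(t-1)/2}$ factor coming from the Katz-Tao $(\de,t)$-set hypothesis on $L$ and the $\ga_{Y,t^\ast}^{-1/2}$ factor coming from non-concentration of each $Y(\ell)$ at scale $t^\ast=\min\{t,2-t\}$.

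First, I would uniformize using Lemma \ref{uniformization} and Lemma \ref{uniform-sets-branching-lem} to assume that $\{Y(\ell):\ell\in L\}$ admits a common branching function; Lemma \ref{two-ends-shading-lem} preserves the two-ends property under refinement. Then Lemma \ref{rich-point-refinement} produces a multiplicity $\mu\approx|E_L|^{-1}\sum_{\ell\in L}|Y(\ell)|$, reducing the target to showing $\mu\lessapprox\de^{-t\e_1/2}\la^{-1/2}\de^{(1-t)/2}\ga_{Y,t^\ast}^{1/2}$. Next, the two-ends reduction (Definition \ref{two-ends-reduction}) with parameter $v$ slightly smaller than $\e_2$ produces a scale $\rho(\ell)\geq\de^{\e_1}$ by Lemma \ref{delta-1-rho}; pigeonholing gives a common $\rho$. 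I would then feed the data into Lemma \ref{multi-scale-lem} with the parameter $t$ there taken to be $t^\ast$, and a small $\eta\ll\nu$, where $\nu$ is the parameter in Theorem \ref{DW-24}. This yields a common intermediate scale $r\in[\de,\rho]$ and exponent $s$ such that, setting $\tilde Y(\ell)=(Y(\ell))_r$, every $\de\times r$-tube inside $(Y(\ell))_r$ carries a $(\de/r,s,(\de/r)^{-9\eta})$-set of $\de$-balls along $\ell$, and $\ga_{\tilde Y,t^\ast}(\ell)\lesssim\ga_{Y,t^\ast}(\ell)$.

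Third, I would apply Lemma \ref{broad-narrow-lem} at each $x\in E_L$ to find a transverse scale $\sigma(x)$ and subsets $L_1(x),L_2(x)\subset L(x)$ of comparable sizes with pairwise angles $\sim\sigma(x)$. After pigeonholing $\sigma(x)$ and $r$ to common values and rescaling so that the relevant tubes are treated as $(\de/\sigma)\times 1$ tubes, the Katz-Tao $(\de,t)$-set hypothesis on $L$, combined with the multi-scale output, supplies the $(\de,s,\de^{-\eta})$-set hypothesis on the angular family $\cT_p$ through each coarse point $p$ that is needed to invoke Theorem \ref{DW-24}. In the first alternative $\#\cp\lesssim\de^{s-\nu}(\#\cT)^2/r^2$, combined with the counting identity $\mu\cdot|E_L|\sim\sum|Y(\ell)|$ from the rich-point refinement and the Katz-Tao bound $\#L\lesssim\de^{-t}$, yields the desired upper bound on $\mu$, with the factor $\ga_{Y,t^\ast}^{1/2}$ emerging from the bookkeeping between $s$ and $t^\ast$. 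In the second alternative, there is a scale $\De\gtrsim\de^{1-\sqrt{\eta}}$ at which $\cup_\cp\cap B$ is a $(\De/\de,\,2-s+\eta^{1/4})$-set, and I would apply Lemma \ref{furstenberg-upper-range} (when the relevant tube family gives the $(\de,u)$-set hypothesis) or Lemma \ref{furstenberg-katz-tao-lem} (when the shading itself provides the $(\de,s)$-set object) at the reduced scale $\De$ to recover the same bound on $\mu$.

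The main obstacle will be the bookkeeping of the four exponents $t$, $s$, $t^\ast$, and $\eta$ so that the factor $\ga_{Y,t^\ast}^{-1/2}$ appears exactly as stated in both alternatives of the dichotomy. Since $\ga_{Y,t^\ast}$ records non-concentration across all scales in $[\de,1]$ while the multi-scale decomposition only sees scales between $\de$ and $r$, one must carefully compare the scale-$r$ non-concentration from Lemma \ref{multi-scale-lem}(a) with the global quantity $\ga_{Y,t^\ast}$, essentially by transferring concentration to the coarse shading $\tilde Y(\ell)$ using Lemma \ref{gamma-y-lemma}. The choice $t^\ast=\min\{t,2-t\}$ reflects the fact that the sub-cases $t\leq 1$ and $t\geq 1$ make different inequalities in the broad-narrow/rescaling step tight, and both sub-cases must be verified against both alternatives of Theorem \ref{DW-24} to yield the same target up to $\de^{-\e}$ losses, as confirmed by the sharpness examples in Remark \ref{sharp-example}.
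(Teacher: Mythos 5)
Your proposal does not prove the statement in question. The statement is Theorem \ref{DW-24}, i.e.\ the point--tube dichotomy of Demeter--Wang (\cite{Demeter-Wang}, Theorem 5.4): given a family of uniform $\de$-balls $\cp$ with $(\de,s)$-sets of tubes $\cT_p$ through each $p\in\cp$, either $\#\cp\lesssim\de^{s-\nu}(\#\cT)^2/r^2$, or there is a scale $\De\gtrsim\de^{1-\sqrt{\eta}}$ at which $\cup_\cp$ is locally a uniform $(\De/\de,2-s+\eta^{1/4},\de^{\eta})$-set. Your sketch instead outlines a proof of the paper's main result (Theorem \ref{two-ends-furstenberg-general}) and, crucially, it \emph{invokes} Theorem \ref{DW-24} as one of its tools (``the Demeter--Wang dichotomy (Theorem \ref{DW-24})''). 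As an argument for Theorem \ref{DW-24} itself this is circular; as written it is simply aimed at the wrong target. None of the ingredients you assemble (two-ends reduction, $\ga_{Y,t^\ast}$ bookkeeping, Lemma \ref{broad-narrow-lem}, the $\de^{(t-1)/2}$ factor from the Katz--Tao hypothesis) bears on the dichotomy, whose statement involves no shading, no two-ends condition, no $\la$, and no $\ga_{Y,t^\ast}$.

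For context: the paper does not prove Theorem \ref{DW-24} either; it cites it from \cite{Demeter-Wang} and only adds the remark that the proof there in fact yields the stronger form of alternative (2) stated here (local uniformity and the $(\De/\de,2-s+\eta^{1/4},\de^{\eta})$-set property of $\De^{-1}$-dilates of $\cup_\cp\cap B$). A genuine proof would have to work directly with the incidence configuration $(\cp,\{\cT_p\})$: roughly, run a multi-scale analysis of the branching structure of $\cup_\cp$, and show that if no scale $\De$ exhibits the $(2-s)$-dimensional local structure of alternative (2), then the non-concentration available at all scales lets one apply a Furstenberg-type incidence estimate (in the spirit of Theorem \ref{furstenberg-thm}) to force the cardinality bound of alternative (1). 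Your proposal contains none of this; it should either be redirected at Theorem \ref{two-ends-furstenberg-general} (where its outline is broadly consistent with the paper's actual argument via Proposition \ref{two-ends-furstenberg-prop} and Proposition \ref{prop-induction}) or rewritten from scratch to address the dichotomy.
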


\begin{remark}
\rm

In  \cite[Theorem 5.4]{Demeter-Wang}, Item (2) is stated in a weaker way. 
However, their proof indeed gives the current stronger version of Item (2), as $\cp$ is uniform.

\end{remark}

We will use Theorem \ref{DW-24} to prove the following proposition, which is a combination of Lemma \ref{furstenberg-upper-range} and the dual version of Theorem \ref{DW-24}.

\begin{proposition}
\label{prop-induction}
For any $\nu>0$, there exists an $\eta>0$, which is much smaller than $\nu^{2\nu^{-1}}$, such that the following is true for sufficiently small $\de$:

Let $(L,Y)_\de$ be a set of uniform, $\de$-separated lines such that for all $\ell\in L$ we have
\begin{enumerate}
    \item $Y(\ell)$ is a $(\de,s, \de^{-\eta/2})$-set, yielding $|Y(\ell)|\gtrsim\de^{1-s+\eta/2}|N_\de(\ell)|$.
    \item $|Y(\ell)|\lesssim\de^{1-s-\eta/2}|N_\de(\ell)|$.
\end{enumerate}
Suppose in addition that
\begin{enumerate}[(i)]
    \item For each $\ell\in L$, the shading $Y(\ell)$ is uniform, and there is a uniform branching function for the family of shadings $\{Y(\ell): \ell\in L\}$.
    \item For all $\rho\in[\delta, 1]$, there are $\gtrsim \rho^{-s}\de^{\eta/2}$ many distinct $\rho \times 1$-tubes  $T_\rho$ such that $L[T_\rho]\not=\varnothing$. 
    \item $\#L(x)\lessapprox |E_L|^{-1}\sum_{\ell\in L}|Y(\ell)|$ for any $x\in E_L$.
\end{enumerate}
Then there exists a refinement $(L',Y')_\de$ of $(L,Y)_\de$ such that one of the following must be true for all $t\in[0,2]$:
\begin{enumerate}
    \item $\# L'(x)\lessapprox\de^{(s-t)/2}$ for all $x\in E_{L'}$. 
    \item We have
    \begin{equation}
        |E_{L'}|\gtrapprox \de^{\nu+3\eta}\de^{2-(t+3s)/2}.
    \end{equation}
    \item There is a $\De\gtrsim \de^{1-\sqrt{\eta}}$, and for each $\ell\in L'$, two numbers $d_1(\ell), d_2(\ell)$ and two families of disjoint $\De$-balls $\cq_1(\ell), \cq_2(\ell)$ such that the following is true:
    \begin{enumerate}
        \item $\#\cq_j(\ell)= d_j(\ell), j=1,2$, and $\cup_{\cq_2(\ell)}\subset\cup_{\cq_1(\ell)}\subset N_\De(\ell)$.
        \item $|E_L\cap Q|\gtrsim  \de^{1-s+2\eta}d_1(\ell)^{-1}\De^{-1}|Q|$ for each $Q\in\cq_1(\ell)$.
        \item For each $Q\in\cq_2(\ell)$, $Q$ contains $\gtrsim \de^{-s-\eta}d_2(\ell)^{-1}$ many $\de$-balls in $Y'(\ell)$.
    \end{enumerate} 
\end{enumerate}

\end{proposition}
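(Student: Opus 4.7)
The plan is to dualize the setup and apply Theorem \ref{DW-24}, with Lemma \ref{furstenberg-upper-range} providing the density bound in the structural-concentration branch. Using the standard point-line duality in $\ZR^2$, I would send each line $\ell\in L$ to a dual point $\ell^\ast$ and each $\de$-ball $B\in Y(\ell)$ to a $\de\times 1$-tube $B^\ast$ through $\ell^\ast$. Setting $\cp=\{\ell^\ast:\ell\in L\}$ and $\cT_{\ell^\ast}=\{B^\ast:B\in Y(\ell)\}$, I would check the hypotheses of Theorem \ref{DW-24}: the $(\de,s,\de^{-\eta/2})$-set structure of $Y(\ell)$ dualizes to the $(\de,s)$-set condition on $\cT_{\ell^\ast}$ (after identifying tubes with arcs of $\ZS^1$); the required lower bound $|\cup_\cp|_\rho\gtrsim\rho^{-s}\de^{\eta/2}$ follows from condition (ii) via duality; and the uniformity and uniform branching are given by (i). Since $|Y(\ell)|\sim\de^{2-s}$ by hypotheses (1)-(2) and uniformity, the parameter $r\sim\de^{-s}$.

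Apply Theorem \ref{DW-24}. In its incidence-bound case, $\#L\lesssim\de^{s-\nu}(\#\cT)^2/r^2$ translates to $\#L\cdot|Y(\ell)|^2\lesssim\de^{s-\nu}|E_L|^2$ after substituting $\#\cT\sim|E_L|/\de^2$ and $r\sim|Y(\ell)|/\de^2$. Combined with $\#L\cdot|Y(\ell)|=\mu|E_L|$ (where $\mu$ is the common multiplicity from (iii)), this simplifies to $|E_L|\gtrsim \mu\,\de^{2-2s+\nu}$. I then dichotomize on $\mu$: if $\mu\lessapprox\de^{(s-t)/2}$, a rich-point refinement (Lemma \ref{rich-point-refinement}) yields conclusion~(1); if $\mu\gtrapprox\de^{(s-t)/2}$, substitution gives $|E_L|\gtrapprox\de^{2-(t+3s)/2+\nu}$, which is conclusion~(2). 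In the structural-concentration case, Theorem \ref{DW-24} produces a scale $\De\gtrsim\de^{1-\sqrt{\eta}}$ such that in each dual $\De$-ball, the $\De^{-1}$-dilate of $\cup_\cp$ is a $(\De/\de,2-s+\eta^{1/4})$-set. I would translate this back: for each $\ell\in L$, the $\De$-balls $Q\subset N_\De(\ell)$ correspond via duality to $\De$-thin clusters of points around $\ell^\ast$, and the $(2-s)$-set structure quantifies how many lines of $L$ traverse $Q$. Feeding this into Lemma \ref{furstenberg-upper-range} (with $t$ close to $s$, $u$ close to $s$) upgrades the line count into the density estimate $|E_L\cap Q|\gtrsim \de^{1-s+2\eta}d_1(\ell)^{-1}\De^{-1}|Q|$ defining $\cq_1(\ell)$. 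The second family $\cq_2(\ell)$ is obtained directly from the uniform branching structure of $Y(\ell)$: uniformity distributes $Y(\ell)$ approximately equally across its cover by $\De$-balls, so each $Q\in\cq_2(\ell)$ contains $\gtrsim\de^{-s-\eta}d_2(\ell)^{-1}$ many $\de$-balls of $Y'(\ell)$.

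The principal obstacle is the structural case. Matching the dual $(\De/\de,2-s)$-structure to the hypotheses of Lemma \ref{furstenberg-upper-range} requires careful parameter tracking: the coarse $\De$-ball in that lemma corresponds to a $\De$-tube in the primal, and the sets $\cT(B)$ there correspond to tubes dual to $\de$-balls of high $E_L$-density. Passing to a refinement $L'$ where $d_1(\ell),d_2(\ell)$ (and the scale $\De$) are uniform across $\ell\in L'$ will require additional uniformization (Lemmas \ref{uniformization} and \ref{uniform-sets-branching-lem}) at dyadic precision. Finally, the $\eta$-losses accrued through duality, Theorem \ref{DW-24}, and Lemma \ref{furstenberg-upper-range} must be tallied to stay inside the $\de^{O(\eta)}$ tolerance of conclusion~(3).
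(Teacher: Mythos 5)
Your overall strategy matches the paper's: uniformize and refine so that the dual configuration $(\cp,\{\cT_p\})$ satisfies the hypotheses of Theorem \ref{DW-24}, split on its two outcomes, and in the incidence-bound branch convert back to get conclusions (1) or (2) by a size/multiplicity dichotomy (the paper dichotomizes on $\# L$ against $\de^{-t+3\eta+\nu}$ rather than on $\mu$, but your algebra reaches the same pair of conclusions). Your Case~A computation is correct.

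The structural branch, however, is where you have a real gap, and it is not merely ``parameter tracking.'' Your duality bookkeeping is off: a $\De$-ball $Q\subset N_\De(\ell)$ in the primal does \emph{not} correspond to a ``$\De$-thin cluster of points around $\ell^\ast$.'' Under point-line duality, the $\De$-balls $B$ in which the DW-24 concentration occurs are in the \emph{dual} plane and correspond to $\De\times1$-tubes $T_\De$ in the primal; the primal $\De$-balls in conclusion~(3) are not obtained by dualizing anything. What Lemma \ref{furstenberg-upper-range}, applied to the dual configuration $(\cp_B,\{\cT_p\}_{p\in\cp_B})$ inside a dual $\De$-ball $B$, actually produces is a lower bound $\#\bigcup_{p\in\cp_B}\cT_p\gtrapprox (\De/\de)\de^{-s+\eta}$, i.e.\ after dualizing back, a lower bound on the \emph{total} measure $|\bigcup_{\ell\subset T_\De}Y(\ell)|$ inside the primal $\De$-tube $T_\De$. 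To convert this tube-level bound into the per-$\De$-ball density estimate of item~(b) of conclusion~(3) one must use that the union $E_L$ was uniformized at the outset (the paper passes to a uniform refinement $E_1$ of $E_{L,Y}$ before dualizing, so that $|E_1\cap Q|\sim\al|Q|$ is the same for all $\De$-balls $Q$ meeting $E_1$), so that the tube-level measure can be redistributed equally among the $d_1(\ell)$ many nonempty $\De$-balls. This preliminary uniformization of $E_L$ itself (not just of the shadings) is missing from your outline, and it is precisely what makes item (b) go through; the ``careful parameter tracking'' you anticipate is subordinate to this structural step. Your description of $\cq_2(\ell)$ via the uniformity of $Y(\ell)$ is correct.
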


\begin{remark}
\rm

The factor $d_2(\ell)$ appearing in Case (3) is not important to the numerological aspect of the proof of Theorem \ref{two-ends-furstenberg-general}. 
It was introduced to indicate that $Y'(\ell)$ is essentially contained in the set $\cup_{\cq_1(\ell)}$.
The reader may take $d_1(\ell)=d_2(\ell)$ and $\cq_1(\ell)=\cq_2(\ell)$ for convenience.
\end{remark}

\smallskip

\begin{proof}[Proof of Proposition \ref{prop-induction}]

Apply Lemma \ref{rich-point-refinement} to $(L,Y)_\de$ to obtain a refinement $(L,Y_1)$ of $(L,Y)_\de$ so that $\#L_{Y_1}(x)$ is the same up to a constant multiple for all $x\in E_{L,Y_1}$.
Apply Lemma \ref{uniformization} to $E_{L,Y_1}$ to obtain a uniform refinement $E_1$ of $E_{L,Y_1}$. 
Define $Y_2(\ell)=Y_1(\ell)\cap E_1$, so that $(L,Y_2)_\de$ is a refinement of $(L,Y)_\de$. 
Therefore, by dyadic pigeonholing, we can find a refinement $(L_2,Y_2)_\de$ of $(L,Y_2)_\de$ such that $L_2$ is uniform and 
\begin{enumerate}
    \item For all $\ell\in L_2$,  $Y_2(\ell)$ is uniform and is a refinement of $Y(\ell)$.
    As a result, $Y_2(\ell)$ is a $(\de,s, \de^{-\eta})$-set and $|Y_2(\ell)|\lessapprox\de^{1-s-\eta}|N_\de(\ell)|$ 
    \item For each $\ell\in L_2$, the shading $Y_2(\ell)$ is uniform, and there is a uniform branching function for the family of shadings $\{Y_2(\ell): \ell\in L_2\}$.
    \item For all $\rho\in[\delta, 1]$, there are $\gtrsim \rho^{-s}\de^{\eta}$ many distinct $\rho \times 1$-tubes  $T_\rho$ such that $L_2[T_\rho]\not=\varnothing$. 
\end{enumerate}

\smallskip

By the point-line duality, we can identify $L_2$ as a set of points $\cp\subset[0,1]^2$ with the following correspondences:
\begin{enumerate}
    \item Each $\ell\in L_2$ corresponds to a point $p\in \cp$, and the shading $Y_2(\ell)$ corresponds to a family of $\de\times1$-tubes $\cT_p$ passing through a point $p\in \cp$.
    \item The union of $\de$-balls $E_{L_2}$ corresponds to the union of $\de$-tubes $\cT=\cup_{p\in \cp} \cT_p$.
\end{enumerate}  
It is straightforward to check that the configuration $(\cp,\cup_{p\in\cp}\cT_p)$ obeys the hypothesis of Theorem \ref{DW-24}, with $r\gtrsim\de^{-s+\eta}$. 
Thus, one of the following must be true:
\begin{enumerate}
    \item[{\bf A.}] We have
    \begin{equation}
        \#\cp \lesssim\de^{s-\nu}\frac{(\#\cT)^2}{r^2}.
    \end{equation}
    \item[{\bf B.}] There exists a scale $\Delta\gtrsim \de^{1-\sqrt{\eta}}$ such that for each $\Delta$-ball $B\subset (\cup_\cp)_{\Delta}$, the $\Delta^{-1}$-dilate of $\cup_\cp\cap B$ is a uniform, $(\delta/\Delta, 2-s+\eta^{1/4}, \de^{-\eta})$-set.
\end{enumerate}

\smallskip

Suppose Case {\bf A} happens. 
Recall that $r\gtrsim\de^{-s+\eta}$.
By reversing point-line duality, we have
\begin{equation}
\label{DW-case-1}
    |E_{L_2}|\gtrsim \de^{\nu/2+\eta}\de^{2-3s/2}(\# L_2)^{1/2}.
\end{equation}
Recall that $\#L(x)\lessapprox |E_L|^{-1}\sum_{\ell\in L}|Y(\ell)|$ for all $x\in E_L$ and $|Y(\ell)|\lesssim\de^{1-s-\eta}|N_\de(\ell)|$ for any $\ell\in L$.
Since $(L_2,Y_2)_\de$ is a refinement of $(L,Y)_\de$, we have $\#L_2(x)\lessapprox |E_{L_2}|^{-1}\de^{2-s-\eta}(\# L_2)\lessapprox\de^{-\nu/2-2\eta}\de^{s/2}(\# L_2)^{1/2}$. 

Take $(L',Y')_\de=(L_2,Y_2)_\de$.
If $\# L\leq \de^{-t+3\eta+\nu}$, then $\# L_2(x)\lessapprox \de^{(s-t)/2}$ for any $x\in E_{L_2}$, giving item (1) in Proposition \ref{prop-induction}. 
If $\# L\geq \de^{-t+3\eta+\nu}$, then $|E_{L_2}|\gtrsim \de^{\nu+3\eta}\de^{2-(t+3s)/2}$, giving item (2) in Proposition \ref{prop-induction}.

\smallskip

Suppose Case {\bf B} happens. 
Then $L_2$ is the set of lines $L'$ we needed in the Proposition.
As for the rest, consider each $\Delta$-ball $B\subset(\cup_\cp)_{\Delta}$ and let $\cp_B$ be the set of $\delta$-balls in $\cp$ contained in $B$. 
For each $p\in \cp_B$, consider the family of tubes $\cT_p$, which is a $(\de,s, \de^{-\eta})$-set.
Since $\De\gtrsim\de^{1-\sqrt{\eta}}$, $\cT_p$ is a $(\de,s, (\de/\De)^{-\sqrt{\eta}})$-set for all $p\in\cp_B$.
Similarly, since the $\De^{-1}$-dilate of the $\cp_B$ is a $(\de/\De, 2-s+\eta^{1/4}, \de^{-\eta})$-set, it is also a $(\de/\De, 2-s+\eta^{1/4}, (\de/\De)^{-\sqrt{\eta}})$-set.
Applying Lemma \ref{furstenberg-upper-range} to the configuration $(\cp_B, \{\cT_p\}_{p\in \cp_B})$ with $(u,t,\eta,r)=(s, s-\eta^{1/4}, \eta^{1/2},\de^{-s+\eta})$, we have $\#\bigcup_{p\in \cp_B}\cT_p\gtrapprox (\De/\de)\de^{-s+\eta}$.

By reversing the point-line duality, each $\Delta$-ball $B$ corresponds to a $\Delta$-tube $T_{\Delta}$, and the $\de\times1$-tubes $\bigcup_{p\in \cp_B}\cT_p$ correspond to the $\de$-balls $\bigcup_{\ell\subset T_{\Delta}}Y_2(\ell)$.
Thus, what we had above can be stated as follows:
There is a partition of $L_2$ into $\{L_2[T_{\Delta}], T_{\Delta}\in\cT_{\Delta}\}$, where $\cT_{\Delta}$ is a family of $\Delta$-separated $\Delta\times1$-tubes, and $L_2[T_{\Delta}]$ is the set of $\ell\in L_2$ contained in $T_{\Delta}$, such that 
\begin{equation}
\label{big-at-Delta}
    \big|\bigcup_{\ell\in L_2[T_{\Delta}]}Y_2(\ell)\big|\sim\de^2\#\bigcup_{p\in \cp_B}\cT_p\gtrapprox(\Delta/\de)\de^{2-s+\eta}.
\end{equation}
Since $E_1$ is uniform, there exists a number $\al$ such that $|E_1\cap Q|\sim \al|Q|$ for all $Q\in\cd_{\De}(E_1)$.
For each $\ell\in L_2$, let $\cq_1(\ell)$ be the family of disjoint $\De$-balls contained in $N_\De(\ell)$ such that $E_1\cap Q\not=\varnothing$ for all $Q\in\cq_1(\ell)$.
Let $T_{\Delta}(\ell)$ be the $\Delta$-tube that contains $\ell$.
Since $\bigcup_{\ell'\in L_2[T_{\Delta}(\ell)]}Y_2(\ell')\subset E_1\cap N_\De(\ell)$, by \eqref{big-at-Delta}, we have
\begin{equation}
    \#\cq_1(\ell)\al\De^2\sim\sum_{Q\in\cq_1(\ell)}|E_1\cap Q|\sim |E_1\cap\cup_{\cq_1(\ell)}|\gtrapprox(\De/\de)\de^{2-s+\eta}.
\end{equation}
This gives item $(b)$ in Case $(3)$.

For items $(a), (c)$ in Case $(3)$, let $\cq_2(\ell)\subset\cq_1(\ell)$ be the family of $\De$-balls such that $Q\cap Y_2(\ell)\not=\varnothing$ for all $Q\in\cq_2(\ell)$. 
Since $Y_2(\ell)$ is uniform and since $Y_2(\ell)\subset E_1$, we know that $\cq_2(\ell)\subset \cq_1(\ell)$.
Finally, since $Y_2(\ell)$ is a uniform $(\de,s, \de^{-\eta})$-set, $Q$ contains $\gtrsim \de^{-s-\eta}d_2(\ell)^{-1}$ many $\de$-balls in $Y_2(\ell)$.
\qedhere

\end{proof}

\medskip 

Similar to the argument in \cite{Wang-Wu}, Theorem \ref{two-ends-furstenberg-general} reduces to the following proposition via a standard two-ends reduction.

\begin{proposition}
\label{two-ends-furstenberg-prop} 
Let $\e>0$ be given.
Let $\eta>0$ be such that Proposition \ref{prop-induction} and Lemma \ref{furstenberg-Katz--Tao-lem} are true with $(\nu,\eta)=(\e^2,2\eta)$.
Then there exists $c_\e>0$ such that the following is true for all $\delta \in (0, 1)$:

Let $(L, Y)_{\delta}$ be a set of $\delta$-separated lines in $\mathbb{R}^2$ with a uniform, $\lambda$-dense shading.
Let $t\in(0,2)$ and $t^\ast=\min\{t,2-t\}$.
Suppose $L$ is a Katz--Tao $(\de,t)$-set, and suppose $Y(\ell)$ is a $(\de,\e^2,C;\rho)$-set for some $\rho \in [\delta, \delta^{\eta}]$ and for all $\ell\in L$.
Then
\begin{equation}
\label{main-prop-esti}
    |E_L|\geq c_{\e} \delta^{\e} C^{-\eta^{-2}} \la^{1/2}\de^{(t-1)/2}\ga_{Y,t^\ast}^{-1/2}\sum_{\ell\in L}|Y(\ell)|. 
\end{equation}
\end{proposition}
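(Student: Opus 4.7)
The plan is to argue by induction on $|\log \delta|$. Fix $\e>0$, and let $\eta>0$ be small enough that Proposition~\ref{prop-induction} and Lemma~\ref{furstenberg-katz-tao-lem} apply with $\nu=\e^2$ (and with $2\eta$ in place of $\eta$); correspondingly choose $\eta_0=\eta^{2\eta^{-1}}$ from Lemma~\ref{multi-scale-lem}. The base case, when $|\log\delta|$ is bounded in terms of $\e$, is trivial upon choosing $c_\e$ small. For the inductive step, I would first apply Lemma~\ref{uniformization} and Lemma~\ref{uniform-sets-branching-lem} to reduce to the case where each $Y(\ell)$ is uniform and the family $\{Y(\ell)\}_{\ell\in L}$ admits a uniform branching function, and then apply Lemma~\ref{rich-point-refinement} to assume $\#L(x)\sim\mu$ for all $x\in E_L$. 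By Lemma~\ref{two-ends-shading-lem} and Lemma~\ref{refinement-delta-s-Delta-set}, the hypotheses (Katz--Tao $(\de,t)$-set, $(\de,\e^2,C';\rho)$-set shadings with $C'\lessapprox C$, $\la$-density, two-ends) are preserved up to a $\lessapprox 1$ worsening of constants, which is absorbed into the $\de^{-\e}$ slack.

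Next I would apply Lemma~\ref{multi-scale-lem} with parameter $t^\ast$ to the uniform branching function, obtaining an intermediate scale $r\in(\eta_0\eta^{-1},1]$, a fine-scale exponent $s\in[0,1]$, and implicitly a coarse-scale exponent $s^c$ encoding the dimension of $Y(\ell)$ at scales $\geq r$. Setting $\widetilde{Y}(\ell):=(Y(\ell))_r$, one has $\ga_{\widetilde Y,t^\ast}(\ell)\lesssim \ga_{Y,t^\ast}(\ell)$, and inside each $\de\times r$-tube $J\subset(Y(\ell))_r$ the $r^{-1}$-dilate of $Y(\ell)\cap J$ along $\ell$ is a $(\de/r,s,(\de/r)^{-9\eta})$-set. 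The coarse shading $\widetilde Y$ then inherits a genuine $(\de,s^c,\de^{-O(\eta)})$-set property at scales $\geq r$, which makes the coarse configuration $(L,\widetilde Y)_\de$ eligible either for a direct application of Lemma~\ref{furstenberg-katz-tao-lem} (in which case the proof concludes immediately after accounting for the fine-scale enhancement inside each $\de\times r$-tube) or for Proposition~\ref{prop-induction}.

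Proposition~\ref{prop-induction} supplies a refinement $(L',\widetilde Y')_\de$ and three alternatives. In the bounded-multiplicity case $\#L'(x)\lessapprox\de^{(s^c-t)/2}$, I would use the trivial incidence identity $|E_{L'}|\mu'\sim\sum_{\ell}|\widetilde Y'(\ell)|$ together with a Furstenberg estimate at the finer scale inside each $\de\times r$-tube (via Lemma~\ref{furstenberg-katz-tao-lem} applied to the $(\de/r,s)$-set structure) to recover \eqref{main-prop-esti}. In the second case, the direct bound $|E_{L'}|\gtrapprox\de^{\e^2+O(\eta)}\de^{2-(t+3s^c)/2}$ matches \eqref{main-prop-esti} after being multiplied by the fine-scale contribution per $\de\times r$-tube. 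In the third case, the structural output at scale $\De\gtrsim\de^{1-\sqrt\eta}$ produces for each $\ell\in L'$ a family $\cq_2(\ell)$ of $\De$-balls carrying the shading; rescaling by $\De^{-1}$ inside each $\De$-ball yields a configuration at the strictly smaller scale $\de/\De\sim\de^{\sqrt\eta}$ that inherits all hypotheses (Katz--Tao, two-ends, $\la'$-density, $(\de/\De,\e^2,C'';\rho/\De)$-set), so the inductive hypothesis applies; summing the resulting bound over $Q\in\cq_2(\ell)$ and $\ell\in L'$ gives \eqref{main-prop-esti}.

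The main obstacle will be Case~(3) of Proposition~\ref{prop-induction}, where the rescaling into each $\De$-ball must be performed carefully enough that the Katz--Tao constant for $L$, the density $\la$, and especially the factor $\ga_{Y,t^\ast}$ transform correctly into their $\de/\De$-scale counterparts; in particular one needs the coarse- and fine-scale contributions to $\ga_{Y,t^\ast}^{-1/2}$ and to the two-ends factor $\de^{t\e_1/2}$ to combine multiplicatively to reproduce the target power in \eqref{main-prop-esti}. The accumulation of $C$-losses across the $\sim\eta^{-2}$ iterations of the multi-scale decomposition is what gives rise to the exponent $C^{-\eta^{-2}}$ in the final bound.
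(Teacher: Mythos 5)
Your high-level frame (backward induction on the scale, uniformization, Lemma \ref{multi-scale-lem} with parameter $t^\ast$, and an appeal to Proposition \ref{prop-induction} with a three-case analysis) matches the paper, but the way you deploy the two key inputs has genuine gaps. First, the ``coarse-scale exponent $s^c$ encoding the dimension of $Y(\ell)$ at scales $\geq r$'' that your argument leans on is not available: Lemma \ref{multi-scale-lem} only regularizes the finest interval of the branching function, and above scale $r$ the branching may consist of many intervals with different slopes, so the coarse shading $\tilde Y(\ell)=(Y(\ell))_r$ need not be a $(\de,s^c,\de^{-O(\eta)})$-set for any useful $s^c$. Consequently neither Lemma \ref{furstenberg-katz-tao-lem} nor Proposition \ref{prop-induction} can be applied to the coarse configuration $(L,\tilde Y)_\de$ as you propose. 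In the paper, Proposition \ref{prop-induction} is applied only to the $r^{-1}$-dilated fine-scale configuration of $\de\times r$-tubes $\cT_B$ inside each $r$-ball, and its spread hypothesis (that there are $\gtrsim (r/v)^{s}(\de/r)^{3\eta_1}$ many $v\times r$-tubes at every scale $v$) is verified by a broad--narrow reduction via Lemma \ref{broad-narrow-lem}, which your outline omits entirely; the narrow case $\al\le(\de/r)^{\eta_1}$ requires its own application of the induction at scale $\de/\al$. The coarse scales are handled not by a single exponent but by the inductive hypothesis itself: Proposition \ref{two-ends-furstenberg-prop} at scale $r$ is applied inside each $(\de/r)\times1$-tube to bound the multiplicity $\mu_3$ of lines sharing a $\de\times r$-segment, and it is the product of this coarse bound with the fine-scale outcomes of Proposition \ref{prop-induction} that produces the factors $\la^{1/2}\de^{(t-1)/2}\ga_{Y,t^\ast}^{-1/2}$; your Case-(1) plan (trivial incidence identity plus Lemma \ref{furstenberg-katz-tao-lem} inside each tube) contains no mechanism generating these factors.

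Second, your treatment of Case (3) does not work as stated: rescaling into each $\De$-ball, applying the inductive hypothesis at scale $\de/\De$, and summing over $Q\in\cq_2(\ell)$ and $\ell\in L'$ does not lower bound $|E_L|$, because the $\De$-balls attached to different lines overlap heavily, so the sum overcounts by exactly the coarse multiplicity you have not controlled (nor is it clear that the localized configurations inherit the proposition's hypotheses). The structure of Case (3) is exploited in the opposite direction in the paper: one passes to the coarse configuration of $\De$-balls $(\tilde L_8,\tilde Y_8)_\De$, with a Katz--Tao $(\De,t)$ refinement of the lines obtained from Lemma \ref{katz-tao-set-lem}, applies the induction at scale $\De$, and then multiplies by the density $\gtrsim d(\de/r)^{3\eta}$ of $E_{L_3}$ inside each $\De$-ball coming from item (b); this is also precisely where the assumption $r\le\de^\eta$ is used, to ensure the $(\cdot,\e^2,\cdot;\max\{\rho,r\})$-spacing hypothesis survives at scale $\De$. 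A parallel coarse-scale induction at scale $r$ is likewise what closes Case (2), a step your sketch leaves unaddressed beyond ``multiplying by the fine-scale contribution.''
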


\smallskip

\begin{proof}[Proof of Theorem \ref{two-ends-furstenberg-general} via Proposition \ref{two-ends-furstenberg-prop}]

It suffices to prove Theorem \ref{two-ends-furstenberg-general} for all $\e\in(0, \sqrt{\e_2})$.
By dyadic pigeonholing, there exists a refinement $L_1$ of $L$ such that for all $\ell\in L_1$, $|Y(\ell)|$ are the same up to a constant multiple. 
Let $Y_1=Y$, and assume $|Y_1(\ell)|\sim\la\delta$ for all $\ell\in L_1$, without loss of generality.

\smallskip

Apply Lemma \ref{uniformization} and Lemma \ref{uniform-sets-branching-lem} to the set of shadings $\{Y_1(\ell):\ell\in L_1\}$, we know that there is a refinement $(L_2,Y_2)_\de$ of $(L_1,Y_1)_\de$ such that 
\begin{enumerate}
    \item For all $\ell\in L_2$, $Y_2(\ell)$ is uniform, and $Y_2(\ell)$ is a refinement of $Y_1(\ell)$. 
    \item There is a uniform branching function $\be_{L_2}$ for the shadings $\{Y_2(\ell):\ell\in L_2\}$. 
\end{enumerate}
By Lemma \ref{two-ends-shading-lem}, there exists $K\lessapprox1$ such that $Y_2$ is $(\e_1, \e_2,K)$-two-ends.
For each $\ell\in L_2$, by Definition \ref{two-ends-reduction}, there exists a scale $\rho(\ell)=\rho(\ell;\e^2,K)\in[\delta,1]$ such that 
\begin{enumerate}
    \item $|Y_2(\ell)|_{\rho(\ell)}< K^{-1}\rho(\ell)^{-\e^2}$.
    \item For all $r\in[\de,\rho(\ell)]$ and all $J\subset (Y_2(\ell))_{\rho(\ell)}$, $|Y_2(\ell)\cap J|_r\gtrapprox K^{-1}(r/\rho)^{\e^2}$.
\end{enumerate}
Since $\e^2< \e_2$, by Lemma \ref{delta-1-rho}, $\rho(\ell)\geq\de^{\e_1}$ for all $\ell\in L_2$.
Apply dyadic pigeonholing to the set $\{\rho(\ell):\ell\in L_2\}$, we can find a uniform $\rho$ and a refinement $L_3$ of $L_2$ such that $\rho(\ell)=\rho$ for all $\ell\in L_3$.
Let $Y_3=Y_2$.
Then the set of lines and shading $(L_3,Y_3)_\de$ is a refinement of $(L,Y)_\de$ with a uniform, $\la$-dense shading.
Moreover, for all $\ell\in L_3$, the following is true:
\begin{enumerate}
    \item $|Y_3(\ell)|_\rho< K^{-1}\rho^{-\e^2} \lessapprox \rho^{-\e^2}$.
    \item For all $r\in[\de,\rho]$ and all $J\subset (Y_3(\ell))_{\rho}$, $|Y_3(\ell)\cap J|_r\gtrapprox K^{-1}(r/\rho)^{\e^2}$.
    \item $\rho\geq\de^{\e_1}$.
\end{enumerate}

\smallskip

Let $\cb_\rho$ be a family of finite-overlapping $\rho$-balls that covers $E_{L_3}$.
For each $B\in\cb_\rho$, let $\cT_B^0$ be the family of distinct $\de\times\rho$-tubes in $B$. 
Define $\cT_B$ as 
\begin{equation}
    \cT_B=\{J\in\cT_B^0:J\subset (Y_3(\ell))_\rho \text{ for some $\ell\in L_3$} \}.
\end{equation}
For each $J\in\cT_B$, let $L_3(J)=\{\ell\in L_3:J\subset(Y_3(\ell))_\rho \}$.
Pick one $\ell\in L_3(J)$ to define a shading $Y_B$ on $J$ as
\begin{equation}
    Y_B(J)=Y_3(\ell)\cap J.
\end{equation}
Since $Y_3$ is uniform, we know that  $Y_B(J)$ is uniform, and $|Y_B(J)|$ are the same up to a constant multiple for all $J\in\cT_B$.
Since $Y_3$ is $\gtrapprox\la$-dense and since $|Y_3(\ell)|_\rho  \lessapprox \rho^{-\e^2} $,  $Y_B(J)$ is $\gtrapprox\la  \rho^{-1 +\e^2}$-dense.
Recall that for all $r\in[\de,\rho]$ and all $T\subset (Y_3(\ell))_{\rho}$, $|Y_3(\ell)\cap T|_r\gtrapprox K^{-1}(r/\rho)^{\e^2}$.
Thus, the $\rho^{-1}$-dilate of $Y_B(J)$ is a $(\de/\rho,\e^2, K')$-set for some $K'\lessapprox1$.

\smallskip

We want to apply Proposition \ref{two-ends-furstenberg-prop} to the $\rho^{-1}$-dilate of $(\cT_B,Y_B)$. 
However,  the $\rho^{-1}$-dilate of $\cT_B$ may not be a Katz--Tao $(\de/\rho,t)$-set of tubes.
To overcome this issue, we will randomly pick a $\cT_B''\subset\cT_B$ that obeys that desired assumption.
After that, we will apply Proposition \ref{two-ends-furstenberg-prop} to $(\cT_B'',Y_B)$. 
Let us turn to details. 

\smallskip

Let $L_3(B)=\{\ell\in L_3: Y_3(\ell)\cap B\not=\varnothing\}$, so $L_3(B)=\bigcup_{J\in\cT_B}L_3(J)$.
Let $\mu_J=\#L_3(J)$.
By dyadic pigeonholing on $\{\mu_J:J\in\cT_B\}$, there is a subset $\cT_B'\subset\cT_B$ and a uniform number $\mu_B$ such that the following is true:
\begin{enumerate}
    \item $\mu_J\sim\mu_B$ for all $J\in\cT_B'$.
    \item $\mu_B\cdot \#\cT_B'\gtrapprox\sum_{J\in\cT_B}\mu_J\sim \#L_3(B)$.
\end{enumerate}

For each $v\in[\de,\rho]$, let $\cT_v(B)$ be the family of distinct $v\times\rho$-tubes in $B$, each of which contains at least one $J\in\cT_B'$.
Define a multiplicity factor $\si$, which measures how much a set fails to be a Katz--Tao set, as follows:
\begin{equation}
    \si:=\sup_{v\in[\de,\rho]} \{ (\de/v)^t\cdot\sup_{\tilde{J}\in\cT_v(B)}\#\{J\subset \tilde{J}: J\in\cT_B'\} \} \geq1.
\end{equation}
Note that for each $v\in[\de,\rho]$ and each $\tilde{J}\in\cT_v(B)$, the set of lines $L(\tilde{J})=\{\ell\in L:|N_\delta(\ell)\cap J| \gtrsim |J|\text{ for some }J\subset \tilde{J},\,J\in\cT_B'\}$ is contained in a $  (v/\rho)\times 1$-tube.
Since $L$ is a Katz--Tao $(\de,t)$-set, $\# L(\tilde{J})\lesssim (v/\de \rho)^t$.
Recall that there are $\gtrsim\#L_3(J)\sim\mu_B$ many $\ell\in L$ such that $|N_\delta(\ell)\cap J| \gtrsim |J|$ for each $J\in\cT_B'$.
Thus, for all $v\in[\de,\rho]$ and all $\tilde{J}\in\cT_v(B)$, $\mu_B^{-1}(v/\de \rho)^t\gtrsim\mu_B^{-1}\#L(\tilde{J})\gtrsim\#\{J\subset \tilde{J}: J\in\cT_B'\}$. 
This implies
\begin{equation}
\label{si-mu-B}
    \si \cdot \mu_B\lesssim \rho^{-t}.  
\end{equation}
Let $\cT_B''\subset\cT_B'$ be a uniform random sample of probability $\si^{-1}$.
Thus, with high probability, $\cT_B''\gtrapprox \si^{-1}\cT_B'$, and $\tilde{J}$ contains $\lessapprox(v/\de)^t$ many $\de\times \rho$-tubes in $\cT_B''$ for all $\tilde{J}\in\cT_v(B)$ and all $v\in[\de,\rho]$.
This shows that the $\rho^{-1}$-dilate of $\cT_B''$ is a Katz--Tao $(\de/\rho,t)$-set.

\smallskip

Recall that $Y_B$ is a uniform, $\gtrapprox\la\rho^{-1+\e^2}$-dense shading, and the $\rho^{-1}$-dilate of $Y_B(J)$ is a $(\de/\rho,\e^2, K')$-set for all $J\in\cT_B$ and some $K'\lessapprox1$.
Apply the $\rho$-dilate version of Proposition \ref{two-ends-furstenberg-prop} to $(\cT_B'',Y_B)$  (with $\e/2$ in place of $\e$) so that
\begin{equation}
\label{scale-rho-esti}
    |E_{\cT_B'',Y_B}|\gtrapprox c_{\e}(\delta/\rho)^{\e/2}\cdot (\lambda \rho^{-1+\e^2})^{1/2}(\de/\rho)^{(t-1)/2}\ga_{Y_B,t^\ast}^{-1/2}\sum_{J\in\cT_B''}|Y_B(J)|. 
\end{equation}

Recall that $\cT_B''\gtrapprox\si^{-1}\cT_B'$, $\#L_3(J)\sim\mu_B$ for all $J\in\cT_B'$, and $\mu_B\cdot \#\cT_B'\gtrapprox \#L_3(B)$.
Since $|Y_B(J)|$ are the same up to a constant multiple, by \eqref{si-mu-B},
\begin{align}
    \sum_{J\in\cT_B''}|Y_B(J)|&\gtrapprox\si^{-1}\sum_{J\in\cT_B'}|Y_B(J)|\gtrapprox(\si\mu_B)^{-1}\sum_{\ell\in L_3(B)}|Y_3(\ell)\cap B|\\
    &\gtrapprox\rho^{t}\int_B\#L_3(x).
\end{align}
Since $\rho\geq\de^{\e_1}$ and since $E_{\cT_B'',Y_B}\subset E_L\cap B$, plug this back to \eqref{scale-rho-esti} so that
\begin{align}
\label{ELcapB-two-ends-reduction}
    |E_L\cap B|&\gtrapprox  c_{\e}(\delta/\rho)^{\e/2} \cdot (\lambda \rho^{-1+\e^2})^{1/2}(\de/\rho)^{(t-1)/2}\ga_{Y_B,t^\ast}^{-1/2}\cdot\rho^{t}\int_B\#L_3(x)\\
    &\geq c_{\e}(\delta/\rho)^{\e/2}  \cdot    \rho^{\e^2/2}\de^{t\e_1/2}\la^{1/2}\de^{(t-1)/2}\ga_{Y_B,t^\ast}^{-1/2}\int_B\#L_3(x).
\end{align}

Notice that $\ga_{Y_B,t^\ast}\lesssim \ga_{Y,t^\ast}$ for all $B\in\cb_\rho$.
Since $\cb_\rho$ covers $E_{L_3}$ and since $(L_3,Y_3)_\de$ is a refinement of $(L,Y)_\de$, we sum up all $B\in\cb_\rho$ in \eqref{ELcapB-two-ends-reduction} to get
\begin{align}
    |E_L|&\gtrapprox c_{\e}\de^{\e/2}   \cdot    \de^{t\e_1/2}\la^{1/2}\de^{(t-1)/2}\ga_{Y,t^\ast}^{-1/2}\int\#L_3(x)\\
    &\geq c_\e\de^{\e}\de^{t\e_1/2}\la^{1/2}\de^{(t-1)/2}\ga_{Y,t^\ast}^{-1/2}\sum_{\ell\in L}|Y(\ell)|. \qedhere
\end{align}

\end{proof}

\medskip 

\subsection{Proof of the main proposition}

Finally, let us see how Proposition \ref{prop-induction} and Lemma \ref{furstenberg-Katz--Tao-lem} imply Proposition \ref{two-ends-furstenberg-prop}.

\begin{proof}[Proof of Proposition \ref{two-ends-furstenberg-prop}]

We will prove \eqref{main-prop-esti} by a backward induction on $\de$.
In the base case when  $\delta\gtrsim_{\e} 1$, we choose $c_{\e}$ sufficiently small such that \eqref{main-prop-esti} is true.
By dyadic pigeonholing, there exists a refinement $L_1$ of $L$ such that for all $\ell\in L_1$, $|Y(\ell)|$ are the same up to a constant multiple. 
Without loss of generality, assume
\begin{equation}
    |Y(\ell)|\sim\la\delta
\end{equation}
for all $\ell\in L_1$.
Denote by $Y_1=Y$, so that $(L_1,Y_1)_\de$ is a refinement of $(L,Y)_\de$ and a family of $\de$-separated lines with a uniform,  $\lambda$-dense shading.
Moreover, there exists a $\rho\in[\de,\de^{\eta}]$ such that $Y_1(\ell)$ is a $(\de,\e^2,C;\rho)$-set for all $\ell\in L_1$.
Note that when $C\geq \delta^{-2\eta^2}$, $|E_L|\geq\sup_{\ell\in L}|Y(\ell)|\geq\de^2\geq C^{-\eta^{-2}}$, which implies \eqref{main-prop-esti} directly.
Thus, we can assume $C\leq\de^{-2\eta^2}$, in which case the spacing condition on $Y_1$ is not vacuous, as $\rho\leq\de^{\eta}$.

\smallskip

Applying Lemma \ref{uniformization} and Lemma \ref{uniform-sets-branching-lem} to the set of shadings $\{Y_1(\ell)\}_{\ell\in L_1}$, we know that there is a refinement $(L_2,Y_2)_\de$ of  $(L_1,Y_1)_\de$ such that $\{Y_2(\ell):\ell\in L_2\}$ admits a uniform branching function.
Let 
\begin{equation}
    \eta_1:=\eps\e^3.
\end{equation}
Let $t^\ast=\min\{t,2-t\}$.
Apply  Lemma \ref{multi-scale-lem} to this family of shadings with $\eta_0=\eta_0(\eta_1)$ with $t=t^\ast$ to obtain a scale $r$ such that the following is true: 
\begin{enumerate}
    \item $r>\delta^{1-\eta_0\eta_1^{-1}}$.
    \item $\ga_{\tilde Y_2, t^\ast}(\ell)\lesssim \ga_{Y_2, t^\ast}(\ell)$, where $\tilde{Y}_2(\ell) = (Y_2(\ell))_r$. 
    \item For each $\delta \times r$-tube $J\subset (Y_2(\ell))_r$, the $r^{-1}$-dilate of $Y_2(\ell)\cap J$ along $\ell$ is a $(\delta/r, s, (\delta/r)^{-9\eta_1})$-set, and
    $\log_{1/\de}\big(\frac{|Y_2(\ell)|_{\de}}{|Y_2(\ell)|_{r}}\big)\leq (s+9\eta_1) \log_{1/\de}(r/\de)$.
\end{enumerate}
Since $Y_2(\ell)$ is a refinement of $Y(\ell)$, by Lemma \ref{refinement-delta-s-Delta-set}, $Y_2(\ell)$ is a $(\de,\e^2,CC_2;\rho)$-set for some $C_2\lessapprox1$.

\medskip  

We consider two separate cases: $r\geq \de^\eta$ and $r\leq \de^{\eta}$.
When $r\geq \de^{\eta}$, note that for all $\ell\in L_2$, $Y_2(\ell)$ is a $(\de,s,\de^{-2\eta})$-set, $|Y_2(\ell)|\approx\la|N_\delta(\ell)|$, and $|Y_2(\ell)|\lesssim \de^{1-s-2\eta}|N_\de(\ell)|$.
Recall that $(\e,\eta)$ is chosen such that Lemma \ref{furstenberg-Katz--Tao-lem} is true for $(\nu,\eta)=(\e^2,2\eta)$.
Apply Lemma \ref{furstenberg-Katz--Tao-lem} to $(L_2,Y_2)_\de$ to have
\begin{equation}
    |E_{L_2}|\geq \de^{\e^2+O(\eta)}\la^{1/2}\de^{(t-1)/2}\ga_{Y_2,t^\ast}^{-1/2}\sum_{\ell\in L_2}|Y_2(\ell)|.
\end{equation}
This concludes \eqref{main-prop-esti} as $(L_2,Y_2)_\de$ is a refinement of $(L,Y)_\de$.

\smallskip

From now on, we assume $r\leq\de^\eta$ (we are only going to use this assumption in Step 5, Case {\bf C}).

\medskip

\noindent {\bf Step 1: Uniformization of $E_L$ inside each $r$-ball}

We will establish a uniformization on $E_{L}\cap B$ for each $r$-ball $B\subset(E_L)_r$. 
Define
\begin{equation}
\label{tilde-lambda}
    \tilde\la:=\la (r/\de)^{1-s-\eta_1}.
\end{equation}
Let $\cT_{\de/r}$ be a maximal collection of distinct $(\de/r)\times1$-tubes. 
For each $T\in\cT_{\de/r}$, recall Definition \ref{L[T]-def} for $L_2[T]$,
and let $\tilde L_2[T]=L_2[T]$.
For each $\ell\in L_2$, define a new shading  $\tilde Y_2(\ell) = (Y_2(\ell))_r$ by $\delta\times r$-tubes as in Definition \ref{tube-segment}.
Note that for each $\de\times r$-tube $J\subset \tilde Y_2(\ell)$, we have $|Y_2(\ell)\cap J|\lesssim (\de/r)^{1-s-\eta_1}|N_\de(\ell) \cap J|$.  
Consequently, $\tilde Y_2(\ell)$ is $\gtrapprox \tilde\la$-dense, as $Y_2(\ell)$ is $\gtrapprox \lambda$-dense. 
Apply Lemma \ref{rich-point-refinement} to $(\tilde L_2[T], \tilde Y_2)_\de$ to obtain a refinement $(\tilde L_2'[T], \tilde Y_2')_\de$ of $(\tilde L_2[T], \tilde Y_2)_\de$ satisfying 
\begin{equation}
    \#(\tilde L_2[T])_{\tilde Y_2'}(x)\sim\frac{\sum_{\ell\in \tilde L_2[T]}|\tilde Y_2(\ell)|}{|E_{\tilde L_2[T], \tilde Y_2'}|}\approx\frac{\sum_{\ell\in \tilde L_2'[T]}|\tilde Y_2'(\ell)|}{|E_{\tilde L_2'[T]}|}.
\end{equation}
Note that $\tilde Y_2'(\ell)$ is $\gtrapprox\tilde\la$-dense and is still a union of $\delta\times r$-tubes.

Recall that $Y_2(\ell)$ is a $(\de,\e^2,CC_2;\rho)$-set for some $C_2\lessapprox1$.
Since $Y_2$ is uniform, $\tilde Y_2(\ell)$ is a $(r,\e^2,C\tilde C_2;\max\{\rho,r\})$-set for some $\tilde C_2\lessapprox1$.
Thus, the anisotropic $r/\de$-dilate of $(\tilde L_2'[T], \tilde Y_2')_\de$ is a $(r,\e^2,C\tilde C_2;\max\{\rho,r\})$-set.
Clearly, $\max\{\rho,r\}\in[r,r^\eta]$, as $\rho\in[\de,\de^\eta]$.
Apply Proposition \ref{two-ends-furstenberg-prop} at scale $r$ to this anisotropic $r/\de$-dilate to bound $E_{\tilde L_2'[T]}$ so that
\begin{align}
    \mu_T:=\frac{\sum_{\ell\in \tilde L_2'[T]}|\tilde Y_2'(\ell)|}{|E_{\tilde L_2'[T]}|}&\lessapprox c_\e^{-1} r^{-\e}(C\tilde C_2)^{\eta^{-2}} \tilde\la^{-1/2} r^{(1-t)/2}\ga_{\tilde Y_2', t^\ast}^{1/2}\\
    &\leq c_\e^{-1} r^{-\e}(C\tilde C_2)^{\eta^{-2}} \tilde\la^{-1/2} r^{(1-t)/2}\ga_{ Y_2, t^\ast}^{1/2}.
\end{align}
We use the estimate $\ga_{\tilde Y_2', t^\ast}\leq\ga_{\tilde Y_2, t^\ast}\lesssim \ga_{Y_2, t^\ast}$ in the last inequality.   
Since the intersection of $N_\de(\ell_1)$ and $N_\de(\ell_2)$ contains at least one $\de\times r$-tube for any two $\ell_1,\ell_2\in L_2[T]$ with $\tilde Y_2'(\ell_1)\cap \tilde Y_2'(\ell_2)\neq \emptyset$, the above estimate shows that for each $\de\times r$-tube $J\subset E_{\tilde L_2( T), \tilde Y_2'}$ parallel to $T$, 
\begin{align} 
\label{it: muT}
    \#\{\ell\in \tilde L_2[T], J\subset \tilde Y_2'(\ell)\}\approx\mu_T
    \lessapprox c_\e^{-1} r^{-\e}(C\tilde C_2)^{\eta^{-2}} \tilde\la^{-1/2} r^{(1-t)/2}\ga_{ Y_2, t^\ast}^{1/2}
\end{align}

\smallskip 

Let $\tilde L_2=\bigcup_{T\in\cT_{\delta/r}}\tilde L_2[T]$ and let $Y_3'(\ell)=\tilde Y_2'(\ell)\cap Y_2(\ell)$ for any $\ell\in \tilde L_2'$. Since $|Y_2(\ell)\cap J|$ are about the same up to a constant multiple, $(\tilde L_2,Y_3')_\de$ is a refinement of  $(L_2,Y_2)_\de$.
By  dyadic pigeonholing, there exists a uniform
\begin{equation}
\label{small-multi-assumption}
    \mu_3\lessapprox  c_\e^{-1} r^{-\e}(C\tilde C_2)^{\eta^{-2}} \tilde\la^{-1/2} r^{(1-t)/2}\ga_{ Y_2, t^\ast}^{1/2}
\end{equation}
and a refinement  $(L_3,Y_3')_\de$ of $(\tilde L_2,Y_3')_\de$ such that the following holds. 
For any $T\in \cT_{\delta/r}$ with $L_3[T]$ nonempty, we have $\mu_T\sim \mu_3$.   
It follows from  \eqref{it: muT} that for any $\delta \times r$-tube $J\subset T$ parallel to $T$ and $ E_{L_3[T], Y_3'}\cap J\neq \varnothing$,  
\begin{equation}
\label{uniform-multi-assumption-pre}
    \#\{\ell\in L_3[T]:  Y_3'(\ell)\cap J\not=\varnothing\}\approx\mu_3.
\end{equation}

\smallskip

Note that for each $r$-ball $B$, $\tilde Y_2'(\ell)\cap B$ is either an empty set, or is essentially an $\de\times r$-tube.
Let $\cb_{r,3}$ be the set of finite-overlapping $r$-balls contained in $(E_{L_3, Y_3'})_r$.  
For each $B\in\cb_{r,3}$, consider the family of distinct  $\de\times r$-tubes 
\begin{equation}
    \cT_B^0:=\{J: \tilde Y_2'(\ell)\cap B\subset J\text{ for some $\ell\in L_3$}\}.
\end{equation}
Thus, if $Y_3'(\ell)\cap B\not=\varnothing$, then there exists a $\de\times r$-tube $J\in\cT_B^0$ such that $Y_3'(\ell)\cap B\subset J$.
By Lemma \ref{uniformization}, there is a uniform refinement $\cT_B$ of $\cT_B^0$.

Now for each $\ell\in L_3$, define a new shading $Y_3(\ell)\subset Y_3'(\ell)$ as follows: 
First, for each $r$-ball $B\in\cb_{r,3}$ such that $Y_3'(\ell)\cap B\not=\varnothing$, define
\begin{equation}
    Y_3(\ell)\cap B=\left\{\begin{array}{cc}
    Y_3'(\ell)\cap B,  & \text{ if $\exists J\in\cT_B$ such that $\tilde Y_2'(\ell)\cap B\subset J$;} \\
    \varnothing,   & \text{otherwise}.
    \end{array}\right.
\end{equation}
Then, define $Y_3(\ell)=\bigcup_{B\in\cb_{r,3}}Y_3(\ell)\cap B$.

Since $\cT_B$ is a refinement of $\cT_B^0$, it follows from  \eqref{uniform-multi-assumption-pre} that for each $B\in\cb_{r,3}$,
\begin{equation}
    \int_B \#(L_3)_{Y_3'}(x)\lessapprox\int_B \#(L_3)_{Y_3}(x).
\end{equation}
This shows that $(L_3, Y_3)_\de$ is a refinement of $(L_3, Y_3')_\de$.

Since $Y_3(\ell)$ is a refinement of $Y_2(\ell)$ at resolution $r$: for  all $\ell\in L_3$ and all $\de\times r$-tubes $J\subset (Y_3(\ell))_r$,  $Y_3(\ell)\cap J = Y_2(\ell)\cap J$, 
the $r^{-1}$-dilate of $Y_3(\ell)\cap J$ along $\ell$ is a $(\de/r, s, (\de/r)^{-\eta_1})$-set, and $|Y_3(\ell)\cap J|$ are about the same up to a constant multiple.

As a remark, we remind the reader that the following is true:
For all $\ell\in L_3$ and all $\de\times r$-tubes $J\subset (Y_3(\ell))_r$, we have
\begin{enumerate}
    \item $Y_3(\ell)\cap J = Y_2(\ell)\cap J$.
    \item The $r^{-1}$-dilate of $Y_3(\ell)\cap J$ along $\ell$ is an $(\de/r, s, (\de/r)^{-\eta_1})$-set.
    \item $|Y_3(\ell)\cap J|\lesssim (\de/r)^{1-s-\eta_1}|J|$.
    \item $|Y_3(\ell)\cap J|$ are about the same.
\end{enumerate}
Moreover, for each $B\in\cb_{r,3}$, there is a uniform set of $\de\times r$-tubes $\cT_B$ such that for all $\ell\in L_3$, $Y_3(\ell)\cap B\subset J$ for some $J\in\cT_B$.
Also, by \eqref{uniform-multi-assumption-pre}, for all $J\in\cT_B$, we have
\begin{equation}
\label{uniform-multi-assumption}
    \#\{\ell\in L_3:  J\subset (Y_3(\ell))_r\}\approx\mu_3.
\end{equation}

\medskip

\noindent {\bf Step 2: A broad-narrow argument.}

Applying Lemma \ref{broad-narrow-lem} to $(L_3, Y_3)_\de$ (with $\al$ in place of $\rho$), after dyadic pigeonholing, there is an $\al\in[\de,1]$ and a set $E_\al\subset E_{L_3}$ such that
\begin{enumerate}
    \item For each $x\in E_\al$, there exists a $10\al$-cap $\si_x\subset\ZS^1$ and a refinement $L_3'(x)$ of $L_3(x)$ such that the direction $V(\ell)\in \si_x$ for all $\ell\in L_3'(x)$.
    \item There are two subsets of lines $L', L''\subset L_3'(x)$ such that $\# L', \# L''\gtrapprox \# L_3'(x)$, and $\al\geq\ang(\ell',\ell'')\gtrapprox \al$ for all $\ell'\in L',\ell''\in L''$.
\end{enumerate}
Define for each $\ell\in L_3$ a new shading $Y_4(\ell)=E_\al\cap Y_3(\ell)$, so $(L_3, Y_4)_\de$ is a refinement of $(L_3, Y_3)_\de$. 
Since $(L_3, Y_3)_\de$ is a refinement of $(L_2, Y_2)_\de$, $(L_3, Y_4)_\de$ is a refinement of $(L_2, Y_2)_\de$.
Thus, by dyadic pigeonholing, there exists a refinement $(L_4, Y_4)_\de$ of $(L_3, Y_4)_\de$ (so it is a refinement of $(L_2,Y_2)_\delta$ as well) such that $Y_4(\ell)$ is a refinement of $Y_2(\ell)$ for all $\ell\in L_4$.

\smallskip

Suppose $\al\leq (\de/r)^{\eta_1}$. 
Recall that $Y_2(\ell)$ is uniform for all $\ell\in L_2\supset L_4$.
Since for each $\ell\in L_4$, $Y_4(\ell)$ is a refinement of $Y_2(\ell)$, by Lemma \ref{uniformization}, there exists a new shading $Y_4'$ such that $Y_4'(\ell)$ is uniform and is a refinement of $Y_4(\ell)$ for all $\ell\in L_4$. 
For each $\ell\in L_4$, consider a new shading $\tilde Y_4'(\ell)=(Y_4'(\ell))_{\de/\al}$.
Since $Y_4'(\ell)$ is uniform, there exists a $\kappa(\ell)<1$ such that the following is true:
\begin{enumerate}
    \item $|Y_4'(\ell)|\approx \kappa(\ell)|\tilde Y_4'(\ell)|$.
    \item $|Y_4'(\ell)\cap J|\approx \kappa(\ell)|J|$ for each $\de\times (\de/\al)$-tube $J\subset (Y_4'(\ell))_{\de/\al}$.
\end{enumerate}
By dyadic pigeonholing on $\{\kappa(\ell): \ell\in L_4\}$, there exists a uniform $\kappa<1$ and a set of lines $L_4'\subset L_4$ such that the following is true. 
\begin{enumerate}
    \item $(L_4',Y_4')_\de$ is a refinement of $(L_4,Y_4)_\de$.
    \item $\kappa(\ell)\sim \kappa \leq 1$ for all $\ell\in L_4'$.
\end{enumerate}

\smallskip

Let $\cT_\alpha$ be a collection of $\alpha$-separated $\alpha\times 1$-tubes, and for each $T\in\cT_{\alpha}$, recall Definition \ref{L[T]-def} for $L_4'[T]$.
Then
\begin{equation}
\label{narrow}
    |E_{L_4}|\gtrsim\sum_{T\in\cT_{\alpha}}|E_{L_4'[T]}|
\end{equation}
For each $T\in \cT_{\alpha}$, let $\phi_T$ be the anisotropic $\alpha^{-1}$-dilate which maps  $T$ to the unit ball, so  $|\phi_T(E_{L_4'[T]})| = (\alpha/\delta) |E_{L_4[T]}|$.
For all $\ell\in L_4'\subset L_2$, recall that $Y_2(\ell)$ is a $(\de,\e^2,CC_2;\rho)$-set for some $C_2\lessapprox1$.
Since $Y'_4(\ell)$ is a refinement of $Y_2(\ell)$, $Y_4'(\ell)$ is a $(\de,\e^2,CC_4;\rho)$-set for some $C_4\lessapprox1$.
Thus, $\tilde Y_4'(\ell)$ is a $(\de/\al,\e^2,CC_4;\max\{\rho,\de/\al\})$-set, 
implying that $\phi_T(\tilde Y_{4}')$ is a $(\de/\al,\e^2,CC_4;\max\{\rho,\de/\al\})$-set.
Clearly, $\max\{\rho,\de/\al\}\in[\de/\al,(\de/\al)^\eta]$, as $\rho\in[\de,\de^\eta]$.
Moreover, since $Y_4'(\ell)$ is $\gtrapprox\la$-dense, we have that $\tilde Y_{4}'(\ell)$ is $\gtrapprox\la \kappa^{-1} $-dense, which implies $\phi_T(\tilde Y_{4}')$ is $\gtrapprox\la \kappa^{-1} $-dense.
Apply induction at scale $\de/\alpha$ to $(\phi_{T}(L_{4}'[T]),\phi_T(\tilde Y_{4}'))_{\de/\alpha}$ so that, after rescaling back, we have 
\begin{align}
    |E_{L_4'[T], \tilde Y_{4}'}|&\geq c_\e(\de/\al)^\e(CC_4)^{-\eta^{-2}}\la^{1/2}\ka^{-1/2}(\de/\al)^{(t-1)/2}\ga_{\tilde Y_4', t^\ast}^{-1/2}\sum_{\ell\in L_4'[T]}|\tilde Y_{4}'(\ell)|\\
    &\gtrapprox c_\e(\de/\al)^\e(CC_4)^{-\eta^{-2}}\la^{1/2}\ka^{-1}\de^{(t-1)/2}\ga_{Y_4', t^\ast}^{-1/2}\sum_{\ell\in L_4'[T]}| Y_{4}'(\ell)|.
\end{align}
In the last inequality, we use $\ga_{\tilde Y_4',t^\ast}\leq \al^{1-t^\ast}\ka^{-1}\ga_{Y_4',t^\ast}$, a consequence of Lemma \ref{gamma-y-lemma} (note that when $t>1$, $\al^{1-t^\ast}=\al^{t-1}$).

Since $|E_{L_4[T]}| \gtrsim \ka |E_{L_4'[T], \tilde Y_{4}'}|$, we thus have
\begin{equation}
    |E_{L_4[T]}|\gtrapprox c_\e(\de/\al)^\e(CC_4)^{-\eta^{-2}}\la^{1/2}\de^{(t-1)/2}\ga_{Y_4', t^\ast}^{-1/2}\sum_{\ell\in L_4'[T]}| Y_{4}'(\ell)|.
\end{equation}
Finally, since $(L_4', Y_4')_\de$ is a refinement of $(L,Y)_\de$ and since $\ga_{Y_4',t^\ast}\leq\ga_{Y,t^\ast}$, by \eqref{narrow}, the above estimate implies
\begin{equation}
    |E_{L}|\geq|E_{L_4}|\geq c_\e\de^{\e}C^{-\eta^{-2}} \la^{1/2}\de^{(t-1)/2}\ga_{Y,t^\ast}^{-1/2}\sum_{\ell\in L}|Y(\ell)|.
\end{equation}

\smallskip

Suppose $\al\geq(\de/r)^{\eta_1}$. 
We proceed to the next step.

\medskip

\noindent {\bf Step 3: Setting up an incidence problem inside an $r$-ball.}

For $k=3,4$ and each $\ell\in L_k$,  let $\cj_k(\ell)=\{J\subset(Y_k(\ell))_r\}$.
Recall that $Y_4(\ell)$ is a $\gtrapprox 1 $-refinement of $Y_2(\ell)$. 
Since $Y_4(\ell)\subset Y_3(\ell)\subset Y_2(\ell)$ and since $Y_2(\ell)$ is uniform, we have $\#\cj_4(\ell)\leq\#\cj_3 (\ell)\lessapprox\#\cj_4(\ell)$.

Let $\cj_5(\ell)=\{J\in\cj_4(\ell): |Y_4(\ell)\cap J|\gtrapprox|Y_3(\ell)\cap J|\}$. 
Recall that  $|Y_3(\ell)\cap J|=|Y_2(\ell)\cap J|$ are  the same up to a constant multiple for all $J\in\cj_3(\ell)$. 
Thus, since $Y_4(\ell)$ is a $\gtrapprox 1 $-refinement of $Y_3(\ell)$, we have $\#\cj_5(\ell)\gtrapprox\#\cj_4(\ell)$. 
Consider a new shading  $Y_5(\ell)=\bigcup_{J\in\cj_5(\ell)}J\cap Y_4(\ell)$, so $Y_5(\ell)$ is a $\gtrapprox 1 $-refinement of $Y_4(\ell)$. 
Consequently, by taking $L_5=L_4$, $(L_5,Y_5)_\de$ is a refinement of $(L_4,Y_4)_\de$.

We remark that the set of lines and shading $(L_5,Y_5)_\de$ has the following properties
\begin{enumerate}
    \item For any $x\in E_{L_5}$, $(L_{3})_{Y_3}(x)$  is two-broad (i.e. $\al \geq (\delta/r)^{\eta_1}$, assumed in Step 2), since $E_{L_5}\subset E_{L_4}\subset E_\al$.
    \item For any $\ell\in L_5$ and any $\de\times r$-tube $J\in\cj_5(\ell)$, the (1-dimensional) $r^{-1}$-dilate of $ Y_5(\ell)\cap J$ along the line $\ell$ is a $(\de/r, s, (\de/r)^{-2\eta_1})$-set. This is a consequence of Lemma \ref{refinement-delta-s-set} and the fact that the $r^{-1}$-dilate of  $Y_3(\ell)\cap J$ is a $(\de/r, s, (\de/r)^{-\eta_1})$-set (see the end of Step 1).
\end{enumerate}

\smallskip

Let $\cb_{r,5}\subset\cb_{r,3}$ be the family of $r$-balls contained in $(E_{L_5, Y_5})_r$.
For each $r$-ball $B\in\cb_{r,5} \subset \cb_{r, 3}$, let $\cT_B$ be the set of distinct $\delta\times r$-tube segments defined in the end of Step 1.  
We restate some properties of $\cT_B$ here:
\begin{enumerate}
    \item For each $\ell \in L_3$ with $Y_3(\ell)\cap B\neq \varnothing$, there exists $J\in \cT_B$ such that $J\subset (Y_3(\ell))_r$.
    \item For any two $J, J'\in \cT_B$, $|J\cap J'|\leq |J|/2$.
    \item $\cT_B$ is uniform, and for each $J\in\cT_B$, \eqref{uniform-multi-assumption} is true.
\end{enumerate}
We are going to analyze the incidences between $\delta\times r$-tubes in $\cT_B$ and $\delta$-balls in $B\cap E_{L_3, Y_3}$ (here we choose $E_{L_3, Y_3}$ over $E_{L_5, Y_5}$ because of the following: Each $x\in E_{L_5, Y_5}$ is ``two-broad" with respect to $(L_3,Y_3)_\de$ but not with respect to $(L_5,Y_5)_\de$).

\smallskip

First, we study the structure of $\cT_B$. 
Pick $\ell\in L_5$ such that $Y_5(\ell)\cap B\not=\varnothing$, so the $r^{-1}$-dilate of $ Y_5(\ell)\cap B$ along the line $\ell$ is a $(\de/r, s, (\de/r)^{-2\eta_1})$-set. 
Recall the ``two-broad" property obtained in Step 2: 
For each $x\in E_{L_5} \subset E_{\al}$, there exists $\ell'\in L_3$ such that $ x\in Y_3(\ell')$ and $\angle (\ell, \ell')\gtrapprox \al\geq(\de/r)^{\eta_1} $.
Hence, for each $v\in[\de,r]$, the number of distinct $v\times r$-tubes required to cover $\cT_B$ is $\gtrsim (r\al/v)^{s-2\eta_1}\geq (r/v)^{s}(\de/r)^{3\eta_1}$. 
Let $\cT_v(B)$ denote this set of $v\times r$-tubes.

Next, for each $J\in\cT_B$, define
\begin{equation}
    L_3(J):=\{\ell\in L_3: J\subset(Y_3(\ell))_r\},
\end{equation}
and let $L_3(B)=\bigsqcup_{J\in\cT_B}L_3(J)$ be the set of lines whose shading intersects $B$. 
Define $K_J=\#L_3(J)$, so by \eqref{uniform-multi-assumption}, $K_J\approx \mu_3$. 
Let $\ell_1(J),\ldots, \ell_{K_J}(J)$ be an enumeration of the lines in $L_3(J)$.
Such enumeration on each $L_3(J)$ gives a natural disjoint partition of $L_3(B)$: 
Let $L_{3,k}(B)=\{\ell_{k}(J):J\in\cT_B\}$, where $\ell_{k}(J):=\varnothing$ if $k> K_J$.
For each $k$ and each $J\in\cT_B$, define a shading
\begin{equation}
\label{shading-B-k}
    Y_{B,k}(J)=\left\{\begin{array}{cc}
    Y_3(\ell_k(J)) \cap J,  & \text{ if } k \leq K_J \\
    \varnothing,   & \text{otherwise}.
    \end{array}\right.
\end{equation}
Let $K_B = \min_{ J\in \cT_B} K_J\approx \mu_3$, so when $k\leq K_B$, $Y_{B,k}(J)\not=\varnothing$ for all $J\in\cT_B$.
Since $|Y_3(\ell)\cap J|$ are about the same for all $\ell\in L_3(J)$ and all $J\subset(Y_3(\ell))_r$, we know that $\sum_{J\in\cT_B}|Y_{B,k}(J)|\approx\sum_{J\in\cT_B}|Y_{B,k'}(J)|$ when $k,k'\leq K_B$, and $\sum_{J\in\cT_B}|Y_{B,k}(J)|\gtrapprox\sum_{J\in\cT_B}|Y_{B,k'}(J)|$ when $k\leq K_B\leq k'$.
Notice that $Y_{B,k}=\varnothing$ whenever $k>\max_{J\in\cT_B}K_J\approx\mu_3\approx K_B$.
Consequently,
\begin{equation}
\label{a-lot-of-J}
    \int_{B}\#L_3(x)=\sum_k\sum_{p\subset E_{\cT_B, Y_{B,k}}}\#\cT_B(p)\lessapprox\sum_{k\leq K_B}\sum_{p\subset E_{\cT_{B}, Y_{B,k}}}\#\cT_{B}(p).
\end{equation}
\medskip

\noindent {\bf Step 4.1: Analyze $E_L\cap B$ for each $r$-ball $B$ in the broad case (I).}

Recall that for all $B\in\cb_{r,5}$ and all $k\leq K_B$,  the configuration $(\cT_B, Y_{B,k})$ satisfies the following properties:
\begin{enumerate}
    \item $\cT_B$ is uniform.
    \item For each $v\in[\de,r]$, there are $\gtrsim (r/v)^{s}(\de/r)^{3\eta_1}$ distinct $v\times r$-tubes containing at least one $J\in\cT_B$. 
    \item For each $J\in\cT_B$, $|Y_{B,k}(J)|\lesssim(\de/r)^{1-s-\eta}|J|$, and the one-dimensional $r^{-1}$-dilate of $Y_{B,k}(J)$ along $J$ is a $(\de/r, s, (\de/r)^{-\eta_1})$-set.
\end{enumerate}

\smallskip

We want to apply Proposition \ref{prop-induction} to the $r^{-1}$-dilate of $(\cT_B, Y_{B,k})$. 
However, the $r^{-1}$-dilate of $(\cT_B, Y_{B,k})$ may not obey its Assumption (iii).   
To get around this issue, we apply Lemma \ref{rich-point-refinement} to $(\cT_B, Y_{B,k})$ to obtain a refinement $(\cT_{B,k}', Y_{B,k}')$ of $(\cT_B,Y_{B,k})$ such that
\begin{enumerate}
    \item For any $J\in\cT_{B,k}'$, the one-dimensional $r^{-1}$-dilate of $Y_{B,k}'(J)$ along $J$ is a $(\de/r, s, (\de/r)^{-2\eta_1})$-set (achieved by Lemma \ref{refinement-delta-s-set}).
    \item Let $E_{\cT_{B,k}',Y_{B,k}'}=\bigcup_{T\in\cT_{B,k}'}Y_{B,k}'(J)$. For any $\de$-ball $p\subset E_{\cT_{B, k}', Y_{B, k}'}$, $\cT_{B,k}'(p):=\{J\in\cT_{B,k}':p\subset Y_{B,k}'(J) \}$ satisfies 
    \begin{equation}
        \#\cT_{B,k}'(p) \lessapprox|E_{\cT_{B,k}',Y_{B,k}'}|^{-1}\sum_{J\in\cT_{B,k}'}|Y_{B,k}'(J)|.
    \end{equation}
\end{enumerate}
Since $|Y_{B,k}(J)|$ are about the same for all $J\in\cT_B$, we have $\#\cT_{B,k}'\gtrapprox\#\cT_B$. 
Apply Lemma \ref{uniformization} to $\cT_{B,k}'$ so that there is a uniform refinement $\cT_{B,k}''$ of $\cT_{B,k}'$.
For each $J\in\cT_B$, let  $Y_{B,k}''(J)=Y_{B,k}'(J)$ if $J\in \cT_{B,k}''$;  otherwise, let $Y_{B,k}''(J)=\varnothing$. 

\smallskip

Recall that the parameter $\eta$ is chosen such that Proposition \ref{prop-induction} is true when $(\nu,\eta)=(\e^2,2\eta)$. 
Since $(\cT_{B,k}'', Y_{B,k}'')$ is a refinement of $(\cT_B, Y_{B,k})$ and since the uniform set $\cT_{B,k}''$ is also a refinement of the uniform set $\cT_B$, we know that
\begin{enumerate}
    \item For each $J\in\cT_{B,k}''$, the one-dimensional $r^{-1}$-dilate of $Y_{B,k}''(J)$ along the tube segment $J$ is a $(\de/r, s, (\de/r)^{-2\eta_1})$-set.
    \item For each $v\in[\de,r]$,  there are $\gtrsim (r/v)^{s}(\de/r)^{3\eta_1}$ distinct $v\times r$-tubes in $\cT_v(B)$ containing at least a tube $J\in\cT_{B,k}''$. 
    \item We have $\#\cT_{B,k}''(p)\leq \#\cT_{B,k}'(p)\lessapprox|E_{\cT_{B,k}',Y_{B,k}'}|^{-1}\sum_{J\in\cT_{B,k}'}|Y_{B,k}'(J)|\lessapprox\\|E_{\cT_{B,k}'',Y_{B,k}''}|^{-1}\sum_{J\in\cT_{B,k}''}|Y_{B,k}''(J)|$ for all $ p\subset E_{\cT_{B,k}'',Y_{B,k}''}$.
\end{enumerate}

Now we apply Proposition \ref{prop-induction} to the $r^{-1}$-dilate of $(\cT_{B,k}'', Y_{B,k}'')$.
After rescaling back, there is a refinement $(\cT_{B,k}''', Y_{B,k}''')$ of $(\cT_{B,k}'', Y_{B,k}'')$ and three possible outcomes:
\begin{enumerate}
    \item[{\bf A.}]  Define $\mu_{B,k}:=\max_{p\subset E_{\cT_{B,k}''', Y_{B,k}'''}} \# \cT_{B,k}'''(p) $, then $\mu_{B,k}\lessapprox (\de/r)^{(s-t)/2}$.
    \item[{\bf B.}] We have
    \begin{equation}
        |E_{\cT_{B,k}''', Y_{B,k}'''}|\gtrapprox (\de/r)^{\e^2+3\eta}(\de/r)^{2-(t+3s)/2}|B|.
    \end{equation}
    \item[{\bf C.}] 
     There exists a scale $\De_B\gtrsim r(\de/r)^{1-\sqrt{\eta}}$, and for each $J\in \cT_{B,k}'''$, two numbers $d_{B,1}(J), d_{B,2}(J)$ and two families of disjoint $\De_B$-balls $\cq_{B,1}(J), \cq_{B,2}(J)$ such that the following is true:
    \begin{enumerate}
        \item $\#\cq_{B,j}(J)= d_{B,j}(J), j=1,2$, and $\cup_{\cq_{B,2}(J)}\subset\cup_{\cq_{B,1}(J)}\subset N_{\De_B}(J)$.
        \item $|E_{\cT_{B,k}'', Y_{B,k}''}\cap Q|\gtrsim  (\de/r)^{1-s+2\eta}d_{B,1}(J)^{-1}(r/\De_B)|Q|$ for all $Q\in\cq_{B,1}(J)$.
        \item For each $Q\in\cq_{B,2}(J)$, $Q$ contains $\gtrsim (\de/r)^{-s-\eta}d_{B, 2}(J)^{-1}$ many $\de$-balls in $Y_{B,k}'''(J)$.
    \end{enumerate} 
\end{enumerate}

For each $k\leq K_B$, one of the above outcome happens for $L_{3,k}(B)$.  For $\bf X\in\{\bf A, B, C\}$, let $\ck_B({\bf X})$ be the set of $k$  such that outcome $\bf X$ holds for $L_{3,k}(B)$.  
Since $(\cT_{B,k}''', Y_{B,k}''')$ is a refinement of $(\cT_B, Y_{B,k})$ for each $k$, we know from \eqref{a-lot-of-J} that
\begin{equation}
    \int_{B}\#L_3(x)=\sum_{{k\leq K_B}}\sum_{p\subset E_{\cT_B, Y_{B,k}}}\#\cT_B(p)\lessapprox\sum_{k\leq K_B}\sum_{p\subset E_{\cT_{B,k}''', Y_{B,k}'''}}\#\cT_{B,k}'''(p).
\end{equation}
By pigeonholing, there exists an $\bf X\in\{\bf A, B, C\}$ such that 
\begin{equation}
\label{case-D}
    \int_{B}\#L_3(x)\lessapprox\sum_{k\in\ck_B(\bf X)}\sum_{p\subset E_{\cT_{B,k}''', Y_{B,k}'''}}\#\cT_{B,k}'''(p).
\end{equation}
We remark that \eqref{case-D} applies to each $B\in\cb_{r,5}$. 

\medskip

\noindent {\bf Step 4.2: Analyze $E_L\cap B$ for each $r$-ball $B$ in the broad case (II).}

Since $(L_5, Y_5)_\de$ is a refinement of $(L_3,Y_3)_\de$ and since for each $B\in\cb_{r,5}$, $B\cap Y_5(\ell)\not=\varnothing$ for some $\ell\in L_5$, we know that
\begin{equation}
    \int_{E_{L_3}}\# L_3(x)\lessapprox\int_{E_{L_5}}\# L_5(x)=\sum_{B\subset\in\cb_{r,5}}\int_{B}\#L_5(x)\leq\sum_{B\in\cb_{r,5}}\int_{B}\#L_3(x) .
\end{equation}
By \eqref{case-D} and pigeonholing, there exists a $\bf X\in\{\bf A, B, C\}$ and a subset $\cb_{r,5}({\bf X})\subset \cb_{r,5}$ such that
\begin{equation}
\label{Y-6-refinement}
    \int_{E_{L_3}}\# L_3(x)\lessapprox\sum_{B\in\cb_{r,5}(\bf X)}\sum_{k\in\ck_B(\bf X)}\sum_{p\subset E_{\cT_{B,k}''', Y_{B,k}'''}}\#\cT_{B,k}'''(p) .
\end{equation}

The configurations $\{(\cT_{B,k}''', Y_{B,k}'''):B\in\cb_{r,5}({\bf X}), k\in K_B(\bf X)\}$ define a new shading $Y_6$ for each $\ell\in L_3$ as follows: 
Let $\cb_{r,5}(\ell)\subset\cb_{r,5}({\bf X})$ be the family of $r$-balls obeying $Y_3(\ell)\cap B\not=\varnothing$. 
For each $B\in\cb_{r,5}(\ell)$, we know that 
\begin{enumerate}
    \item There is a $k\in\ck_B({\bf X})$ such that $\ell\in L_{3,k}(B)$.
    \item For this $k$, there is a $J\in\cT_B$ such that $Y_{B,k}(J)\supset Y_3(\ell)\cap B$.
\end{enumerate}
Now, for this $J\in\cT_B$,  we define
\begin{equation}
    Y_6(\ell)\cap B:=Y_{B,k}'''(J),\,\,\,\text{ and }\,\,\, Y_6(\ell)=\bigcup_{B\in\cb_{r,5}(\ell)}Y_6(\ell)\cap B.
\end{equation}
Let $L_6=L_3$. Then by \eqref{Y-6-refinement}, $(L_6, Y_6)_\de$ is a refinement of $(L_3, Y_3)_\de$.

\smallskip

The definition of  $(L_6, Y_6)_\de$ is subject to the outcome $\bf X\in\{A,B,C\}$ from pigeonholing.  
Let us discuss what happens for different possibilities of $\bf X$. Define $\cb_{r,6}=\cb_{r,5}(\bf X)$, so $E_{L_6}\subset \cup_{\cb_{r,6}}$.

\smallskip

When $\bf X=A$: For each $B\in\cb_{r,6}$, we have $\mu_{B,k} \lessapprox (\de/r)^{(s-t)/2}$ for each $k\in\ck_B(\bf X)$. As a result, for each $x\in E_{L_6} \cap B$,  since $\#\ck_B({\bf X})\lessapprox \mu_3$, 
\begin{equation}
    \# L_6(x)\leq \sum_{k\in\ck_B(\bf X)}\mu_{B,k}\lessapprox \#\ck_B({\bf X})\cdot \mu_{B,k}\lessapprox \mu_3\cdot (\de/r)^{(s-t)/2}.
\end{equation}

\smallskip

When $\bf X=B$: For each $B\in\cb_{r,6}$, since $E_{\cT_{B,k}''', Y_{B,k}'''}\subset E_{L_6}\cap B$ for any $k\in\ck_B(\bf X)$, 
\begin{equation}
    |E_{L_6}\cap B|\geq\max_{k\in\ck_B(\bf X)}|E_{\cT_{B,k}''', Y_{B,k}'''}|\gtrapprox (\de/r)^{\e^2+3\eta}(\de/r)^{2-(t+3s)/2}|B|.
\end{equation} 

\smallskip

When $\bf X=C$: For each $B\in\cb_{r,6}$ and each  $k\in\ck_B(\bf X)$, there exists a scale $\De_B\gtrsim r(\de/r)^{1-\sqrt{\eta}}$, and for each $J\in \cT_{B,k}'''$, two numbers $d_{B,1}(J), d_{B,2}(J)$ and two families of disjoint $\De_B$-balls $\cq_{B,1}(J), \cq_{B,2}(J)$ such that the following is true:
\begin{enumerate}
    \item$\#\cq_{B,j}(J)= d_{B,j}(J), j=1,2$, and $\cup_{\cq_{B,2}(J)}\subset \cup_{\cq_{B,1}(J)}\subset N_{\De_B}(J)$.
    \item As $E_{\cT_{B,k}'', Y_{B,k}''}\subset E_{L_3}$, we have $|E_{L_3}\cap Q|\gtrsim  (\de/r)^{1-s+2\eta}d_{B,1}(J)^{-1}(r/\De_B)|Q|$ for each $Q\in\cq_{B,1}(J)$.
    \item For each $Q\in\cq_{B,2}(J)$, $Q$ contains $\gtrsim (\de/r)^{-s-\eta}d_{B, 2}(J)^{-1}$ many $\de$-balls in $Y_{B,k}'''(J)$.
\end{enumerate}

\smallskip

Let us summarize what we have so far: $(L_6, Y_6)_\de$ is a refinement of $(L_2, Y_2)_\de$, and for $(L_6, Y_6)_\de$, either one of the following must happen:
\begin{enumerate}
    \item[{\bf A.}]  For each $x\in E_{L_6}$, 
    \begin{equation}
        \# L_6(x)\leq \sum_{k\in\ck_B(\bf X)}\mu_{B,k}\lessapprox \#\ck_B({\bf X})\cdot \mu_{B,k}\lessapprox \mu_3\cdot (\de/r)^{(s-t)/2}.
    \end{equation}
    \item[{\bf B.}] For each $B\in\cb_{r,6}$, we have
    \begin{equation}
        |E_{L_6}\cap B|\geq\max_{k\in\ck_B(\bf X)}|E_{\cT_{B,k}''', Y_{B,k}'''}|\gtrapprox (\de/r)^{\e^2+3\eta}(\de/r)^{2-(t+3s)/2}|B|.
    \end{equation} 
    \item[{\bf C.}] For all $B\in\cb_{r,6}$, there exists a scale $\De_B\geq r(\de/r)^{1-\sqrt{\eta}}$ such that for all $\ell\in L_6$, there are two numbers $d_{B,1}(\ell), d_{B,2}(\ell)$ and two families of disjoint $\De_B$-balls $\cq_{B,1}(\ell), \cq_{B,2}(\ell)$, and the following is true:
    \begin{enumerate}
        \item $\#\cq_{B,j}(\ell)= d_{B,j}(\ell), j=1,2$, and $\cup_{\cq_{B,2}(\ell)}\subset\cup_{\cq_{B,1}(\ell)}\subset N_{\De_B}(\ell)\cap B$.
        \item $|E_{L_3}\cap Q|\gtrsim  (\de/r)^{1-s+2\eta}d_{B,1}(\ell)^{-1}(r/\De_B)|Q|$ for each $Q\in\cq_{B,1}(\ell)$.
        \item For each $Q\in\cq_{B,2}(\ell)$, $Q$ contains $\gtrsim (\de/r)^{-s-\eta}d_{B, 2}(\ell)^{-1}$ many $\de$-balls in $Y_6(\ell)\cap B$.
    \end{enumerate} 
\end{enumerate}
We proceed to the next step.

\medskip

\noindent {\bf Step 5: Estimate $|E_L|$: Three  cases.}

\smallskip  

{\bf Case A.} Suppose item {\bf A} happens.
By \eqref{small-multi-assumption} and \eqref{tilde-lambda}, we get 
\begin{align}
    \#L_6(x)&\lessapprox c_\e^{-1} r^{-\e}(C\tilde C_2)^{\eta^{-2}} \tilde\la^{-1/2} r^{(1-t)/2}\ga_{ Y_2, t^\ast}^{1/2}\cdot (\de/r)^{(s-t)/2}\\
    &\lessapprox c_\e^{-1}r^{-\e}(r/\de)^{\eta_1}(C\tilde C_2)^{\eta^{-2}}\la^{-1/2}\de^{(1-t)/2}\ga_{Y_2, t^\ast}^{1/2}.
\end{align}
Since $(L_6, Y_6)_\de$ is a refinement of $(L,Y)_\de$ and since $\ga_{Y_2, t^\ast}\leq \ga_{Y, t^\ast}$, we thus have,
\begin{align}
    |E_L|\geq|E_{L_6}|&\gtrapprox c_\e r^{\e}(r/\de)^{-\eta_1}(C\tilde C_2)^{-\eta^{-2}} \la^{1/2}\de^{(t-1)/2}\ga_{Y,t^\ast}^{-1/2}\sum_{\ell\in L_6}|Y_6(\ell)|\\
    &\geq c_\e\de^{\e}(C\tilde C_2)^{-\eta^{-2}}\la^{1/2}\de^{(t-1)/2}\ga_{Y,t^\ast}^{-1/2}\sum_{\ell\in L}|Y(\ell)|.
\end{align}
The last inequality follows from the fact $\eta_1\leq\e$.
This concludes \eqref{main-prop-esti}.

\medskip 

{\bf Case B.}  Suppose item {\bf B} happens. Then for each $B\in\cb_{r,6}$,
\begin{equation}
\label{high-density-in-r-ball}
    |E_{L}\cap B|\geq |E_{L_6}\cap B|\gtrapprox (\de/r)^{\e^2+3\eta}(\de/r)^{2-(t+3s)/2}|B|.
\end{equation}
Let $L_7$ be the set of lines $\ell\in L_6$ such that $|Y_6(\ell)| \gtrapprox |Y_2(\ell)|\approx\la\delta$.
Since $(L_6,Y_6)_\de$ is a refinement of $(L_2,Y_2)_\de$,
$\#L_7 \gtrapprox \#L_2$. 
For each $\ell\in L_7$, let  $Y_7(\ell)$ be a uniform refinement of $ Y_6(\ell)$, so $|Y_7(\ell)|\approx \la\delta$.
Note that $(L_7,Y_7)_\de$ is a refinement of $(L_2,Y_2)_\de$.

For each $\ell\in L_7$, let $\cb_{r,6}(\ell)\subset\cb_{r,6}$ be so that $Y_7(\ell)\cap B\not=\varnothing$ for all $B\in\cb_{r,6}(\ell)$.
Now define a new shading 
\begin{equation}
    \tilde Y_8(\ell)=\cup_{\cb_{r,6}(\ell)}\cap N_{r}(\ell)
\end{equation}
by $r$-balls. 
Since $|Y_7(\ell)\cap B|\lesssim (\de/r)^{1-s-\eta_1}|N_\de(\ell)\cap B|$ for each $B\in \cb_{r, 6}(\ell)$ (from the setup before Step 1), we have (recall \eqref{tilde-lambda})
\begin{equation}
  |\tilde Y_8(\ell)|/ |N_r(\ell)| \gtrsim \de^{-1}|Y_7(\ell)|\cdot (r/\de)^{1-s-\eta_1}\gtrapprox \tilde \la.
\end{equation}
Moreover, since $\ga_{\tilde Y_2, t^\ast}\lesssim \ga_{Y_2, t^\ast}$, we have $\ga_{\tilde Y_8, t^\ast}\lesssim \ga_{Y_2, t^\ast}$

\smallskip

By Lemma \ref{Katz--Tao-set-lem}, there exists $\tilde L_8\subset L_7$ such that $(r^t\# \tilde L_8)\gtrapprox (\de^t \# L_7)$, and $\tilde L_8$ is a Katz--Tao $(r,t)$-set. 
For all $\ell\in L_7$, since $Y_7(\ell)$ is a refinement of $Y_2(\ell)$ and since $Y_2(\ell)$ is a $(\de,\e^2,CC_2;\rho)$-set for some $C_2\lessapprox1$, $Y_7(\ell)$ is a $(\de,\e^2,CC_7;\rho)$-set for some $C_7\lessapprox1$.
Since $Y_7(\ell)$ is uniform, $\tilde Y_8(\ell)$ is a $(r,\e^2,CC_8;\max\{\rho,r\})$-set for some $C_8\lessapprox1$.
Clearly, $\max\{\rho,r\}\in[r,r^\eta]$, since $\rho\in[\de,\de^\eta]$.
Apply Proposition \ref{two-ends-furstenberg-prop} at scale $r$ to $(\tilde L_8, \tilde Y_8)_{r}$ so that 
\begin{align}
    |E_{\tilde L_8}|&\geq c_\e r^{\e}(C\tilde C_8)^{-\eta^{-2}} \tilde{\lambda}^{1/2}r^{(t-1)/2}\ga_{\tilde Y_8, t^\ast}^{-1/2} \sum_{\ell\in \tilde L_8}|\tilde Y_8(\ell)|\\
    & \gtrsim c_\e r^{\e}(C\tilde C_8)^{-\eta^{-2}} \tilde{\lambda}^{3/2}r^{(t-1)/2}\ga_{\tilde Y_8, t^\ast}^{-1/2} (r \# \tilde L_8).
\end{align}
Since $(r^t\# \tilde L_8)\gtrapprox (\de^t \# L_7)$, since $\ga_{\tilde Y_8, t^\ast}\lesssim \ga_{Y_2, t^\ast}$, and since $|Y_7(\ell)|\approx \la\delta$ for all $\ell\in L_7$, the above gives
\begin{align}
\nonumber
    |E_{\tilde L_8}|& \gtrsim c_\e r^{\e}(C\tilde C_8)^{-\eta^{-2}} \tilde{\lambda}^{3/2}r^{(t-1)/2}\ga_{Y_2, t^\ast}^{-1/2}\cdot   \lambda^{-1}(r/\de)^{1-t} \sum_{\ell \in L_7} |Y_7(\ell)|\\ \label{E-tilde-L8}
    &\gtrsim c_\e r^{\e}(C\tilde C_8)^{-\eta^{-2}}\la^{1/2}r^{(t-1)/2}\ga_{Y_2, t^\ast}^{-1/2}(r/\de)^{3(1-s-\eta_1)/2}(r/\de)^{1-t}\sum_{\ell\in L_7}|Y_7(\ell)|.
\end{align}
Therefore, by \eqref{high-density-in-r-ball}, \eqref{E-tilde-L8} implies
\begin{align}
    |E_L|& \gtrapprox  \min_{B\in\cb_{r,6}}\frac{  |E_L\cap B| }{|B|} \cdot |E_{\tilde L_8}| \gtrapprox (\de/r)^{\e^2+3\eta}(\de/r)^{2-(t+3s)/2}|E_{\tilde L_8}|\\[2ex]
    &\gtrapprox (\de/r)^{\e^2+3\eta+3\eta_1/2}\cdot c_\e r^{\e}(C\tilde C_8)^{-\eta^{-2}}\la^{1/2}\de^{(t-1)/2}\ga_{Y_2, t^\ast}^{-1/2}\sum_{\ell\in L_7}|Y_7(\ell)|\\
    &\gtrapprox (\de/r)^{\e^2+3\eta+2\eta_1-\e}\cdot c_\e \de^{\e}(C\tilde C_8)^{-\eta^{-2}}\la^{1/2}\de^{(t-1)/2}\ga_{Y, t^\ast}^{-1/2}\sum_{\ell\in L}|Y(\ell)|.
\end{align}
Since $\eta_1\leq\eta\leq\e^2$, we have  $(\de/r)^{\e^2+3\eta+2\eta_1-\e}\geq1$. 
This proves \eqref{main-prop-esti}.

\medskip

{\bf Case C.} Suppose item {\bf C} happens. Then for each $B\in\cb_{r,6}$,  there exists a scale $\De_B\geq r(\de/r)^{1-\sqrt{\eta}}$.
Note that $Y_6(\ell)\subset\cup_{\cb_{r,6}}$for any $\ell\in L_6$. Since $(L_6,Y_6)_\de$ is a refinement of $(L_2,Y_2)_\de$, we have
\begin{equation}
    \int_{E_{L_2}}\# L_2(x)\lessapprox\int_{\cup_{\cb_{r,6}}}\# L_6(x).
\end{equation}
By dyadic pigeonholing, there exist  $\cb_{r,6}'\subset\cb_{r,6}$ and $\De\geq r(\de/r)^{1-\sqrt{\eta}}$ so that

\begin{enumerate}
    \item $ \int_{E_{L_2}}\# L_2(x)\lessapprox \int_{\cup_{\cb_{r,6}'}}\# L_6(x)$.
    \item For all $B\in\cb_{r,6}'$ and all $\ell\in L_6$, there are two numbers $d_{B,1}(\ell), d_{B,2}(\ell)$ and two families of disjoint $\De$-balls $\cq_{B,1}(\ell), \cq_{B,2}(\ell)$ such that:
    \begin{enumerate}
        \item $\#\cq_{B,j}(\ell)= d_{B,j}(\ell), j=1,2$, and $\cup_{\cq_{B,2}(\ell)}\subset\cup_{\cq_{B,1}(\ell)}\subset N_{\De}(\ell)\cap B$.
        \item $|E_{L_3}\cap Q|\gtrsim  (\de/r)^{1-s+2\eta}d_{B,1}(\ell)^{-1}(r/\De)|Q|$ for each $Q\in\cq_{B,1}(\ell)$.
        \item For each $Q\in\cq_{B,2}(\ell)$, $Q$ contains $\gtrsim (\de/r)^{-s-\eta}d_{B, 2}(\ell)^{-1}$ many $\de$-balls in $Y_6(\ell)\cap B$.
    \end{enumerate} 
\end{enumerate}
Thus, if denoting $Y_6'(\ell)=Y_6(\ell)\cap\cup_{\cb_{r,6}'}$ for all $\ell\in L_6$, then $(L_6,Y_6')_\de$ is a refinement of $(L_2,Y_2)_\de$. 
Let $(L_7,Y_7)_\de$ be a refinement of $(L_6,Y_6')_\de$ such that $|Y_7(\ell)|\gtrapprox |Y_2(\ell)|$ and $Y_7(\ell)$ is uniform for all $\ell\in L_7$. 
Hence $|Y_7(\ell)|\approx \la\de$ for all $\ell\in L_7$.

\smallskip

For each $\ell\in L_7$, let $\cb_{r,6}'(\ell)\subset\cb_{r,6}'$ be so that $Y_7(\ell)\cap B\not=\varnothing$ for any $B\in\cb_{r,6}'(\ell)$.
By dyadic pigeonholing on $\{d_{B,1}(\ell), d_{B,2}(\ell):B\in\cb_{r,6}'(\ell)\}$, there is a set $\cb_{r,6}''(\ell)\subset \cb_{r,6}'(\ell)$ and two numbers $d_1(\ell),d_2(\ell)$ such that
\begin{enumerate}
    \item For all $B\in\cb_{6,r}''(\ell)$, $d_{B,j}(\ell)\sim d_j(\ell), j=1,2$.
    \item $|Y_7'(\ell)|\gtrapprox|Y_7(\ell)|\approx\la\de$, where $Y_7'(\ell):=Y_7(\ell)\cap\cup_{\cb_{6,r}''(\ell)}$.
\end{enumerate}
Thus, $(L_7, Y_7')_\de$ is a refinement of $(L_6,Y_6)_\de$.
By dyadic pigeonholing again, there is a refinement $L_8$ of $L_7$ and two uniform numbers $d_1, d_2$ such that for all $\ell\in L_8$, $Y_7'(\ell)$ is $\gtrapprox1$-refinement of $Y_7(\ell)$, and $d_j(\ell)\sim d_j,j=1,2$.
The configuration $(L_8, Y_7', d_1, d_2)$ satisfies that for all $\ell\in L_8$, all $B\in\cb_{6,r}''(\ell)$, we have the following:
\begin{enumerate}
    \item  $\cup_{\cq_{B,2}(\ell)}\subset\cup_{\cq_{B,1}(\ell)}\subset N_{\De}(\ell)\cap B$.
    \item $|E_{L_3}\cap Q|\gtrsim  (\de/r)^{1-s+2\eta}d_1^{-1}(r/\De)|Q|$ for each $Q\in\cq_{B,1}(\ell)$.
    \item For each $Q\in\cq_{B,2}(\ell)$, $Q$ contains $\gtrsim (\de/r)^{-s-\eta}d_{2}^{-1}$ many $\de$-balls in $Y_7(\ell)\cap B$.
\end{enumerate}

\smallskip

Now for each $\ell\in L_8$, we define three new shadings 
\begin{equation}
\label{caseC-Y-8}
    Y_8(\ell)=Y_7'(\ell)\bigcap\cup_{\cb_{r,6}'''(\ell)},\,\,\,\tilde Y_8(\ell)=\bigcup_{B\in\cb_{r,6}'''(\ell)}\cup_{\cq_{B,1}(\ell)}, \,\,\,\tilde Y_8'(\ell)=\bigcup_{ B\in\cb_{r,6}''(\ell)}\cup_{\cq_{B,2}(\ell)}.
\end{equation}
Since $d_1|\tilde Y_8'(\ell)|\sim d_2|\tilde Y_8(\ell)|$, we have $\ga_{\tilde Y_8, t^\ast}\lesssim (d_1/d_2)\ga_{\tilde Y_8', t^\ast}$.
Also, by Lemma \ref{gamma-y-lemma}, we know $\ga_{\tilde Y_8', t^\ast}\lesssim(\de/\De)^{-t^\ast}(\de/r)^{s+\eta}d_2\ga_{Y_7, t^\ast}$. 
Thus, we have
\begin{equation}
    \ga_{\tilde Y_8, t^\ast}\lesssim(\de/\De)^{-t^\ast}(\de/r)^{s+\eta}d_1\ga_{Y_7, t^\ast}.
\end{equation}

Define $d:=(\de/r)^{-s-\eta}(\de/\De)d_1^{-1}$ to simplify notation. 
Then 
\begin{equation}
\label{gamma-8-gamma-7}
    \ga_{\tilde Y_8, t^\ast}\lesssim(\de/\De)^{1-t^\ast}d^{-1}\ga_{Y_7, t^\ast}.
\end{equation}
Moreover, for all $\ell\in L_8$, all $B\in\cb_{6,r}''(\ell)$, and all $Q\in\cq_{B,1}(\ell)$, we have 
\begin{equation}
\label{density-Delta-ball}
    |E_{L_3}\cap Q|\gtrsim  (\de/r)^{3\eta}d|Q|.
\end{equation}

Since for each $r$-ball $B\in \cb_{r,6}'$, 
$|Y_7(\ell)\cap B|\lesssim|Y_2(\ell)\cap B|\lesssim (\de/r)^{1-s-\eta_1}|N_\de(\ell)\cap B|$, and since $|Y_7(\ell)|\approx|Y_7'(\ell)|\approx \la\de$,
we know that $\#\cb_{r,6}''(\ell)\gtrapprox (\de r)^{-1}|Y_7(\ell)| (r/\de)^{1-s-\eta_1}$.
Therefore, $\tilde Y_8(\ell)$ contains $\gtrapprox d_1\cdot(\de r)^{-1}|Y_7(\ell)| (r/\de)^{1-s-\eta_1}=(\de/r)^{-s-\eta}(\de/\De)d^{-1}\cdot (\de r)^{-1}\la\de (r/\de)^{1-s-\eta_1}$ many $\De$-balls. 
Simplify the equation to get that $\tilde Y_8(\ell)$ contains $\gtrapprox(\de/r)^{\eta_1-\eta}d^{-1}\De^{-1}\la$ many $\De$-balls. 
This gives
\begin{align}
    |\tilde Y_8(\ell)|&\gtrapprox (\de/r)^{\eta_1-\eta}d^{-1}\De^{-1}\la\cdot\De^2\sim (\de/r)^{\eta_1-\eta}d^{-1}\la\De.
\end{align}
In particular, $\tilde Y_8$ is $\gtrapprox (\de/r)^{\eta_1-\eta}d^{-1}\la$-dense.

\smallskip

By Lemma \ref{Katz--Tao-set-lem}, there exists $\tilde L_8\subset L_8$ such that $(\De^t\# \tilde L_8)\gtrapprox (\de^t \# L_8)$, and $\tilde L_8$ is a Katz--Tao $(\De,t)$-set.
Since $L_8$ is a refinement of $L_7$, $(\De^t\# \tilde L_8)\gtrapprox (\de^t \# L_7)$.
Now we are going to use the assumption $r\leq\de^{\eta}$.
For all $\ell\in L_8$, since $Y_8(\ell)$ is a refinement of $Y_2(\ell)$ and since $Y_2(\ell)$ is a $(\de,\e^2,CC_2;\rho)$-set for some $C_2\lessapprox1$, $Y_8(\ell)$ is a $(\de,\e^2,CC_8;\rho)$-set for some $C_8\lessapprox1$.
Since $Y_7(\ell)$ is uniform, by the definition of $Y_8$ in \eqref{caseC-Y-8}, $(Y_8(\ell))_r$ is a $(r,\e^2,C\tilde C_8;\max\{\rho,r\})$-set for some $\tilde C_8\lessapprox1$.
Finally, since $|\tilde Y_8(\ell)\cap J|_\De\sim d$ for all $J\subset (\tilde Y_8(\ell))_r$ and by \eqref{caseC-Y-8}, 
$\tilde Y_8(\ell)$ is a $(\De,\e^2,C\tilde C_8;\max\{\rho,r\})$-set.
Note that $\max\{\rho,r\}\in[\De,\De^\eta]$, since $\rho\in[\de,\de^\eta]$ and since $r\leq\de^{\eta}$.
Apply induction at scale $\De$ to $(\tilde L_8, \tilde Y_8)_{\De}$ so that 
\begin{align}
\nonumber
    &|E_{\tilde L_8}| \gtrapprox   c_\e \De^{\e}(C\tilde C_8)^{-\eta^{-2}}(\de/r)^{(\eta_1-\eta)/2}d^{-1/2}\la^{1/2}\De^{(t-1)/2}\ga_{\tilde Y_8, t^\ast}^{-1/2}\sum_{\ell\in \tilde L_8}|\tilde Y_8(\ell)|\\ \nonumber
    &\gtrapprox  c_\e \De^{\e}(C\tilde C_8)^{-\eta^{-2}}(\de/r)^{3(\eta_1-\eta)/2}d^{-1/2}\la^{1/2}\De^{(t-1)/2}\ga_{\tilde Y_8, t^\ast}^{-1/2}d^{-1}(\de/\De)^{t-1}\sum_{\ell\in L_7}|Y_7(\ell)|\\ \nonumber
    &\gtrapprox  c_\e \De^{\e}(C\tilde C_8)^{-\eta^{-2}}(\de/r)^{3(\eta_1-\eta)/2}\la^{1/2}\de^{(t-1)/2}\ga_{Y_7, t^\ast}^{-1/2}d^{-1}\sum_{\ell\in L_7}|Y_7(\ell)|.
\end{align}
In the last inequality, we  use \eqref{gamma-8-gamma-7} (note that when $t<1$, $(\de/\De)^{1-t^\ast}=(\de/\De)^{1-t}$).

Recall \eqref{density-Delta-ball} that for all $\De$-ball $Q\subset E_{\tilde L_8}$, $|E_{L_3}\cap Q|\gtrsim d(\de/r)^{3\eta}|Q|$. 
Thus, since $(L_7, Y_7)_\de$ is a refinement of $(L,Y)_\de$, since $\ga_{Y_7, t^\ast}\leq\ga_{Y, t^\ast}$, and since $\eta_1\leq\eta$, we get
\begin{align}
    |E_L|&\geq d(\de/r)^{3\eta}|E_{\tilde L_8}|\gtrapprox c_\e \De^{\e}(C\tilde C_8)^{-\eta^{-2}}(\de/r)^{3\eta}\la^{1/2}\de^{(t-1)/2}\ga_{Y_7, t^\ast}^{-1/2}\sum_{\ell\in L}|Y(\ell)|\\
    &  \gtrapprox (\de/r)^{3\eta}(\De/\de)^{\e}\cdot c_\e \de^{\e}(C\tilde C_8)^{-\eta^{-2}}\la^{1/2}\de^{(t-1)/2}\ga_{Y, t^\ast}^{-1/2}\sum_{\ell\in L}|Y(\ell)|.
\end{align}
Since $\De\geq r(\de/r)^{1-\sqrt{\eta}}$, we have $\De/\de\geq (r/\de)^{\sqrt{\eta}}$.
Also, since $\eta_1\leq\eta\leq\e^{10}$, we get $(\de/r)^{3\eta}(\De/\de)^{-\e}\geq (r/\de)^{\e\sqrt{\eta}-3\eta}\geq (r/\de)^{\eta}\geq1$. 
Consequently, we have $|E_L|\geq  c_\e \de^{\e}(C\tilde C_8)^{-\eta^{-2}}\la^{1/2}\de^{(t-1)/2}\ga_{Y, t^\ast}^{-1/2}\sum_{\ell\in L}|Y(\ell)|$.
This concludes Proposition \ref{two-ends-furstenberg-prop}. \qedhere

\end{proof}

\bigskip

\bibliographystyle{alpha}
\bibliography{bibli}

\end{document}